\renewcommand{\textcolor}[2]{#2}
\newcommand{\cG}{\mathcal{G}}
\newcommand{\cQ}{\mathcal{Q}}
\newcommand{\bbk}{\mathbb{k}}
\DeclareMathOperator{\Ext}{Ext}
\DeclareMathOperator{\HH}{HH}
\DeclareMathOperator{\Image}{Im}
\DeclareMathOperator{\Ker}{Ker}
\DeclareMathOperator{\rank}{rank}
\DeclareMathOperator{\tr}{tr}
\DeclareMathOperator{\Hom}{Hom}
\DeclareMathOperator{\tails}{\mathsf{tails}}
\DeclareMathOperator{\coh}{\mathsf{coh}}
\DeclareMathOperator{\Db}{\mathsf{D^{b}}}
\DeclareMathOperator{\mods}{\mathsf{mod}}
\DeclareMathSymbol{\Xdsum}{\mathop}{largesymbols}{88}
\DeclareMathSymbol{\Xtsum}{\mathop}{largesymbols}{80}
\DeclareMathOperator*{\dsum}{\mathchoice{\Xdsum}{\Xdsum}{\Xtsum}{\Xtsum}}
\newcommand\Dsum{\dsum\limits}
\theoremstyle{plain} 
\newtheorem{thm}{Theorem}[section]
\newtheorem{cor}[thm]{Corollary}
\newtheorem{lem}[thm]{Lemma}
\newtheorem{prop}[thm]{Proposition}
\newtheorem*{thmnonumber}{Theorem}
\theoremstyle{definition}
\newtheorem{dfn}[thm]{Definition}
\newtheorem{rem}[thm]{Remark}
\numberwithin{equation}{section}
\title[Hochschild cohomology of Beilinson alg. of graded down-up alg.]
{Hochschild cohomology of Beilinson algebras of graded down-up algebras with weights ($n,m$)}
\author{Ayako Itaba}
\address{
Katsushika Division, 
Institute of Arts and Sciences, 
Tokyo University of Science, 
6-3-1 Niijuku, Katsushika-ku, Tokyo, 125-8585, JAPAN}
\email{itaba@rs.tus.ac.jp}
\author{Shu Minaki}
\address{
Department of Mathematics, 
Graduate School of Science, 
Tokyo University of Science, 
1-3 Kagurazaka, Shinjuku-ku, Tokyo, 162-8601, JAPAN}
\email{1125704@ed.tus.ac.jp} 
\begin{document}
%%%%%%%%%%%%%%%%%%%%%%%%%%%%%%%%%%%%%%%%%%%%%%%%%%%
\begin{abstract}
Let $A=A(\alpha, \beta)$ be a graded down-up algebra 
with weights $(\mathrm{deg}\, x, \mathrm{deg}\, y)=(n,m)$
and $\beta \neq 0$, and $\nabla A$ its Beilinson algebra.
Such an algebra $A$ is a 3-dimensional cubic AS-regular algebra 
by Kirkman--Musson--Passman.
Assuming $\gcd(n, m)=1$ and $m \geq n$, we extend the previous results 
on the Hochschild cohomology of $\nabla A$. 
Known cases include $(n,m) = (1,1)$ (Belmans) and $(n = 1,\,m \geq 2)$ (Itaba--Ueyama).
In this paper, we determine the dimensions of the Hochschild cohomology groups 
of $\nabla A$ in the remaining case $n\geq 2$ and $m\geq 2$ by explicitly constructing the projective resolution and computing the ranks of the arising representation matrices. 
As a byproduct, for $m>n>1$, we show that the derived category of the noncommutative projective scheme associated to $A$ is not equivalent to the derived category of any smooth projective surface. Moreover, for all $m \geq  n \geq 1$, we describe the ring structure of the Hochschild cohomology group $\nabla A$ with respect to the Yoneda product.
\end{abstract}
%%%%%%%%%%%%%%%%%%%%%%%%%%%%%%%%%%%%%%%%%%%%%%%%%%%
\subjclass[2020]{16E40, 16S38, 16E05, 18G80}
%16E40 (Co)homology of rings and algebras (e.g. Hochschild, cyclic, dihedral, etc.)
%16S38 Rings arising from non-commutative algebraic geometry
%16E05 Syzygies, resolutions, complexes
%18E30 Derived categories, triangulated categories
\keywords{Hochschild cohomology, down-up algebras, AS-regular algebras, Beilinson algebras, noncommutative projective schemes, derived equivalences}
%%%%%%%%%%%%%%%%%%%%%%%%%%%%%%%%%%%%%%%%%%%%%%%%%%%
\maketitle
%%%%%%%%%%%%%%%%%%%%%%%%%%%%%%%%%%%%%%%%%%%%%%%%%%%
\section{Introduction}
%%%%%%%%%%%%%%%%%%%%%%%
Throughout this paper, let $\bbk$ be an algebraically closed field 
\textcolor{red}{of characteristic zero}. 
%Throughout this paper, let $\bbk$ be an algebraically closed field with $\mathrm{char} \,\bbk =0$.
%The definition of an AS-regular algebra over $\bbk$ was introduced by Artin--Schelter \cite{AS}. 
%The \textcolor{red}{notion} of an AS-regular algebra over $\bbk$ was introduced by Artin--Schelter \cite{AS}. 
\textcolor{red}{Artin--Schelter \cite{AS} introduced the notion of an AS-regular algebra over $\bbk$.}
Note that an AS-regular algebra is a noncommutative analogue of 
\textcolor{red}{a commutative polynomial ring and plays a central role in noncommutative algebraic geometry.} 
\textcolor{blue}{Using an algebro-geometric approach}
to noncommutative algebra, 
noncommutative projective schemes associated to AS-regular algebras  
\textcolor{blue}{were formalized by Artin--Zhang \cite{AZ}.}
%It was defined by the Serre quotient of 
%\textcolor{blue}{
%the category of finitely generated right modules
%}
%by the subcategory of finite dimension right modules. 
\textcolor{red}{%Such schemes 
These are defined as the Serre quotient of} 
\textcolor{blue}{the category of finitely generated graded right $A$-modules by the subcategory of finite-dimensional right $A$-modules}. 
%For the relationship between a noetherian AS-regular algebra $A$ and the Beilinson algebra $\nabla A$ of $A$, 
%an important result due to Minamoto--Mori \cite[Theorem 4.14]{MM} is as follows; 
%%%%%%%%%%%%%%%%%%%%%%%%%
%let $A$ be a coherent AS-regular algebra of the dimension $d$ and $\nabla A$ the Beilinson algebra of $A$.
%Then the global dimension of $\nabla A$ of $A$ is $d-1$.
\textcolor{red}
{%For the relationship between a noetherian AS-regular algebra $A$ and its Beilinson algebra 
%$\nabla A$, an important result of Minamoto--Mori \cite[Theorem 4.14]{MM} is as follows: 
An important result of Minamoto--Mori \cite[Theorem 4.14]{MM} 
%relating a noetherian AS-regular algebra $A$ and its Beilinson algebra $\nabla A$ 
is as follows: 
If $A$ is a coherent $d$-dimensional AS-regular algebra 
%of dimension $d$, 
and $\nabla A$ is its Beilinson algebra, then the global dimension of $\nabla A$ is $d-1$}
%%
%Moreover, there exists a triangulated category equivalence 
%$
%\Db(\tails A) \cong \Db(\mods \nabla A)
%$, 
%where $\mods \nabla A$ is the category of finitely generated right $\nabla A$-modules and $\tails A$ is the noncommutative projective scheme, and 
%\textcolor{blue}{
%$\Db(\tails A)$ and $\Db(\mods \nabla A)$ are the bounded derived categories
%}
%of $\tails A$ and 
%$\mods \nabla A$, respectively.
\textcolor{red}{and there is a triangulated equivalence $\Db(\tails A) \cong \Db(\mods \nabla A)$. Here $\mods \nabla A$ denotes the category of finitely generated right $\nabla A$-modules 
and $\tails A$ denotes the noncommutative projective scheme associated to $A$;}
\textcolor{blue}{$\Db(\tails A)$ and $\Db(\mods \nabla A)$ are the bounded derived categories} 
\textcolor{red}{of $\tails A$ and $\mods \nabla A$, respectively.}
%%%%%%%%%%%%%%%%%%%%%%%%%%%%%%%%%%%%%%%%%%%%%%%%%%%

\textcolor{red}{In this paper, we focus on graded down-up algebras.}
%The graded $\bbk$-algebra
%\begin{align*}
%A(\alpha, \beta) := \bbk\langle{x,y} \rangle/(x^{2}y-\beta yx^{2} -\alpha xyx,\; xy^{2}-\beta y^{2}x -\alpha yxy) \\
%%\text{where}
%\ (\alpha, \beta \in k \ \text{and} \ \mathrm{deg}\, x=n, \mathrm{deg}\,y=m \in \mathbb{N}^{+})
%\vspace{-8pt}
%\end{align*}
%is called a {\em graded down-up algebra with weights $(n, m)$}. 
\textcolor{red}{For $\alpha,\,\beta\in \bbk$ and positive integers $n,\,m$, 
{\em the graded down-up algebra with weights $(n,m)$} is the graded 
$\bbk$-algebra 
$$
A(\alpha, \beta) := \bbk\langle{x,y} \rangle/(x^{2}y-\beta yx^{2} -\alpha xyx,\; xy^{2}-\beta y^{2}x -\alpha yxy), 
$$
with $\deg x=n, \deg y=m$.}
\textcolor{red}{This down-up algebra was defined by Benkart--Roby \cite{BR} for studying a poset.}
\textcolor{blue}{Kirkman--Musson--Passman \cite{KMP} showed that a down-up algebra $A(\alpha,\beta)$ is a $3$-dimensional noetherian AS-regular algebra if and only if $\beta \neq 0$.}
\textcolor{red}{In that case, its Gorenstein parameter equals $2(n + m)$.}
%%%%%%%%%%%%%%%%%%%%%%%%%%%%%%%%%%%%%%%%%%%%%%%%%%%
Let $A:=A(\alpha, \beta)$ be a down-up algebra 
with weights $\deg\, x=n$, and $\deg\, y =m$, $\beta \neq 0$
\textcolor{red}
{and $\nabla A$ its Beilinson algebra. }
%%%
%In \cite{IU}, the first author and Ueyama mentioned that, if $n$ and $m$ are not coprime, then the Beilinson algebra $\nabla A$ of $A$ can be decomposed into the algebra in which $n$ and $m$ are coprime 
%\textcolor{blue}{
%and it is sufficient to study those algebras.
%}
\textcolor{red}{In \cite{IU}, the first author and Ueyama remarked that 
if $n$ and $m$ have a common factor, then the Beilinson algebra $\nabla A$ decomposes into a direct sum (or product) of algebras corresponding to weight-pairs $(n',\,m')$ with $\gcd(n',\,m') = 1$; hence it suffices to study the coprime case.}
%\textcolor{blue}{
%hence it is sufficient to study those algebras.}

%By Carvalho--Musson (\cite[4.1 Lemma]{CM}), we can assume that $m \geq n$ without loss of generality. 
%So, when we study the Hochschild cohomology of such a down-up algebra, it is sufficient to study the cases that $\gcd(n, m)=1$ and $m \geq n$. 
\textcolor{red}{By Carvalho--Musson \cite[4.1 Lemma]{CM}, we may assume $m \geq n$} without loss of generality. 
\textcolor{red}{Thus it is enough to consider the case $\gcd(n,m) = 1$ with $m \geq n$ 
when studying the Hochschild cohomology of $\nabla A$.}
%Note that, 
\textcolor{red}{For the case $n=m=1$}, Belmans \cite{Bel} 
%gave 
\textcolor{red}{computed} the dimensions of the Hochschild cohomology groups and the Lie structures of the first Hochschild cohomology by using a geometric approach. 
\textcolor{red}{For the case $m \geq n=1$}, the first author and Ueyama \cite{IU} 
%gave 
\textcolor{red}{computed} the dimensions of the Hochschild cohomology groups 
by using an algebraic approach. 
\textcolor{blue}{In this paper, we determine the dimensions} of the Hochschild cohomology groups $\HH^{r}(\nabla A)$ \textcolor{red}{of $\nabla A$} \textcolor{red}{for} the case $m>n>1$ 
by the same algebraic approach \textcolor{red}{used} in \cite{IU}. 
%%%%%%%%%%%%%%%%%%%%%%%%%%%%
One of the main results in this paper is as follows: 
%%%%%%%%%%%%%%%%%%%%%%%%%%%%
\begin{thmnonumber}[Theorem \ref{HHdim-IM}]\label{main-thm}
Let $A=A(\alpha, \beta)$ be a graded down-up algebra with weights $\mathrm{deg}\ x=n$, $\mathrm{deg}\ y=m$ and $\beta\neq 0$ 
where $m \geq n >1$ and $\mathrm{gcd}(n, m)=1$, and $\nabla A$ \textcolor{red}{its} Beilinson algebra.
Then we obtain the \textcolor{red}{dimension} formula of the Hochschild cohomology groups 
$\HH^{r}(\nabla A)$ %of $\nabla A$ 
as follows:
\begin{itemize}
\item $\dim_{\bbk} \HH^0(\nabla A)=1;$
\item 
$\text{\textcolor{red}{$\dim_{\bbk}$}} \HH^1(\nabla A)=
\begin{cases}
2 \quad \text{\textup{if $n+m$ is even and $\alpha=0$}},\\
1 \quad \text{\textup{if $n+m$ is odd or $\alpha\neq 0$}};
\end{cases}$
\item $\dim_{\bbk} \HH^{2}(\nabla A)=
\begin{cases}
m+5 \quad \text{\textup{if $n=2$, $n+m$ is even, and $\alpha=0$}},\\
m+4 \quad \text{\textup{if $n=2$ and $n+m$ is odd or $\alpha\neq 0$}},\\
n+m+1 \quad  \text{\textup{if $n \geq 3$, $n+m$ is even, and $\alpha=0$}},\\
n+m \quad \text{\textup{if $n \geq 3$ and $n+m$ is odd or $\alpha\neq 0$}};
\end{cases}$
\item $\dim_{\bbk} \HH^{r}(\nabla A)= 0$ for $r\geq 3$. 
\end{itemize}
\end{thmnonumber}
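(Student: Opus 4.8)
The plan is to follow the algebraic method of \cite{IU}: present $\nabla A$ explicitly as a bound quiver algebra $\k Q/I$ and compute $\HH^{\ast}(\nabla A)=\Ext^{\ast}_{(\nabla A)^{e}}(\nabla A,\nabla A)$, where $(\nabla A)^{e}=\nabla A\otimes_{\k}(\nabla A)^{\op}$, from a short projective bimodule resolution. First I would record the presentation: by the Minamoto--Mori construction of the Beilinson algebra, $Q$ has vertices $0,1,\dots,\ell-1$ with $\ell=2(n+m)$ the Gorenstein parameter of the cubic AS-regular algebra $A$; it has exactly one arrow $i\to i+n$ for each $0\le i\le\ell-1-n$ (from $x$) and one arrow $i\to i+m$ for each $0\le i\le\ell-1-m$ (from $y$); and $I$ is generated by the $m+n$ relations obtained by placing the two cubic down-up relations at every admissible starting vertex. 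That these are all of the arrows and relations follows from the structure theory of graded down-up algebras with $\beta\neq0$: a PBW basis, and the Hilbert series $H_{A}(t)=\bigl((1-t^{n})(1-t^{m})(1-t^{n+m})\bigr)^{-1}$, from which $\dim_{\k}A_{d}$ is a routine lattice-point count. In the range $m>n\ge2$, $\gcd(n,m)=1$, this count gives $\dim_{\k}A_{n}=\dim_{\k}A_{m}=1$ and $\dim_{\k}A_{2n+m}=2$, while $\dim_{\k}A_{n+2m}=2$ for $n\ge3$ and $\dim_{\k}A_{n+2m}=3$ for $n=2$ --- the extra basis vector being the pure power $x^{m+1}$, which has degree $n+2m$ exactly when $n=2$. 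This asymmetry is the origin of the special case $n=2$.

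\textbf{A length-two bimodule resolution.} Being cubic AS-regular of dimension $3$, $A$ is $3$-Koszul and homologically smooth and admits the standard ``Koszul'' bimodule resolution
\[
0\longrightarrow P_{3}\longrightarrow P_{2}\longrightarrow P_{1}\longrightarrow P_{0}\longrightarrow A\longrightarrow0
\]
over $A^{e}$, with $P_{0}=A\otimes A$, $P_{1}=A\otimes A(-n)\oplus A\otimes A(-m)$, $P_{2}=A\otimes A(-(2n+m))\oplus A\otimes A(-(n+2m))$ and $P_{3}=A\otimes A(-\ell)$. Applying the exact Beilinson functor (degreewise truncation of graded $A^{e}$-modules to the window of internal degrees $[0,\ell-1]$, which sends $A$ to $\nabla A$ and free graded bimodules to standard projective $(\nabla A)^{e}$-modules) annihilates $P_{3}$, since it is generated in internal degree $\ell$, and yields an exact sequence
\[
0\longrightarrow Q_{2}\longrightarrow Q_{1}\longrightarrow Q_{0}\longrightarrow\nabla A\longrightarrow0,
\]
with $Q_{0}=\bigoplus_{v}\nabla A\,e_{v}\otimes e_{v}\nabla A$ indexed by vertices, $Q_{1}$ by the arrows, and $Q_{2}$ by the $m+n$ relations. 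In particular $\projdim_{(\nabla A)^{e}}\nabla A\le2$, so $\HH^{r}(\nabla A)=0$ for all $r\ge3$; this settles the last clause. (For $n=1$ this is the resolution used in \cite{IU}.)

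\textbf{Reducing to one rank computation.} Applying $\Hom_{(\nabla A)^{e}}(-,\nabla A)$ gives a three-term complex $C^{0}\xrightarrow{\,\delta^{0}\,}C^{1}\xrightarrow{\,\delta^{1}\,}C^{2}$ with $C^{0}=\bigoplus_{v}e_{v}\nabla A\,e_{v}\cong\k^{\,2(n+m)}$; with $C^{1}=\bigoplus_{a}e_{t(a)}\nabla A\,e_{s(a)}$, of dimension $(\ell-n)+(\ell-m)=3(n+m)$; and with $C^{2}=\bigoplus_{\rho}e_{t(\rho)}\nabla A\,e_{s(\rho)}$, of dimension $m\cdot\dim_{\k}A_{2n+m}+n\cdot\dim_{\k}A_{n+2m}$, that is $2(n+m)$ if $n\ge3$ and $2(n+m)+2$ if $n=2$ by the count above. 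Since $Q$ is connected and acyclic one has $\HH^{0}(\nabla A)=Z(\nabla A)=\k$, hence $\dim_{\k}\Im\delta^{0}=2(n+m)-1$. Consequently $\dim_{\k}\HH^{2}(\nabla A)=\dim_{\k}\Coker\delta^{1}=\dim_{\k}C^{2}-\rank\delta^{1}$ and $\dim_{\k}\HH^{1}(\nabla A)=\dim_{\k}\ker\delta^{1}-\dim_{\k}\Im\delta^{0}=(n+m)+1-\rank\delta^{1}$, so everything comes down to computing $\rank\delta^{1}$.

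\textbf{The main computation and the expected obstacle.} Writing $\delta^{1}$ through the two cubic relations and the left/right actions of $x$ and $y$ on the one-dimensional path spaces $e_{j}\nabla A\,e_{i}\cong A_{j-i}$, I expect to show that $\rank\delta^{1}=n+m$ whenever $\alpha\neq0$ or $n+m$ is odd, and $\rank\delta^{1}=n+m-1$ when $\alpha=0$ and $n+m$ is even. Substituting into the formulas above then gives $\HH^{0}=\k$; $\HH^{1}$ of dimension $1$ in the generic case and $2$ when $\alpha=0$ and $n+m$ is even; and $\HH^{2}$ of dimension $n+m$ (resp.\ $n+m+1$) for $n\ge3$ and $m+4$ (resp.\ $m+5$) for $n=2$ --- exactly the four displayed cases. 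The substantive work, and the main obstacle, is the rank drop by $1$ when $\alpha=0$ and $n+m$ is even: there the defining relations become binomial and, since $\gcd(n,m)=1$ forces both $n$ and $m$ odd (so $\ell\equiv0\pmod4$), a single extra outer derivation survives --- equivalently, one element of $C^{2}$ leaves the image of $\delta^{1}$ --- and isolating it requires a careful degree-by-degree analysis of the differential over the whole Beilinson window. Verifying that the rank does not drop in any other case, uniformly for all $m>n\ge3$, is the second delicate point; both are handled by the same bookkeeping as in the $n=1$ case of \cite{IU}.
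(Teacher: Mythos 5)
Your setup agrees with the paper's in every structural respect: the same quiver presentation of $\nabla A$, a length-two projective bimodule resolution indexed by vertices, arrows and relations (the paper obtains it as the minimal resolution via Green--Snashall together with Happel's $\pd_{(\nabla A)^{\mathrm e}}\nabla A=\mathrm{gl.dim}\,\nabla A=2$, rather than by truncating the Koszul bimodule resolution of $A$; your truncation route is plausible but its exactness and preservation of projectives are themselves asserted, not checked), the same dimension counts $\dim_{\k}\widehat{P^{0}}=2(n+m)$, $\dim_{\k}\widehat{P^{1}}=3(n+m)$, and $\dim_{\k}\widehat{P^{2}}=2(n+m)$ for $n\geq3$ versus $2m+6$ for $n=2$, and the same reduction of all four displayed formulas to the single quantity $\rank\delta^{1}$.

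The genuine gap is that you never compute $\rank\delta^{1}$. You write that you ``expect to show'' it equals $n+m$ generically and $n+m-1$ when $\al=0$ and $n+m$ is even, and you defer the verification to ``the same bookkeeping as in the $n=1$ case''; but this rank is the entire mathematical content of the theorem, and the heuristic you offer (one surviving outer derivation when the relations become binomial) is a plausibility argument, not a proof. What the paper actually does (Lemma~\ref{rank-L1}) rests on a concrete device you would need to find or replace: after elementary row operations the transpose of the matrix $L_{1}$ of $\delta^{1}$ becomes $\left(\begin{smallmatrix}\be C&\al E_{n+m}\\ 0&0\end{smallmatrix}\right)$, where $C$ is the $(n+m)\times(n+m)$ circulant matrix with associated polynomial $x^{m}+1$. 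Ingleton's rank formula then gives $\rank C=(n+m)-\deg\gcd(x^{n+m}-1,\,x^{m}+1)$, and since $\gcd(n,m)=1$ one has $\gcd(x^{m}+1,x^{n}+1)=x+1$ when $n$ and $m$ are both odd and $1$ otherwise --- exactly the dichotomy ``$n+m$ even versus odd'' --- while the block $\al E_{n+m}$ restores full rank whenever $\al\neq0$. Without identifying this circulant structure (or supplying an equivalent computation), your final paragraph records the desired answer rather than establishing it, so the four-case formula for $\HH^{1}$ and $\HH^{2}$ is not yet proved.
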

%%%%%%%%%%%%%%%%%%%%%%%%%%%%
%%%%%%%%%%%%%%%%%%%%%%%%%%%%%%%%%%%%%%%%%%%%%%%%%%%
\textcolor{red}{As a byproduct, we obtain structural consequences for the derived categories associated to these algebras. 
In particular, for $m>n>1$, we show that $\Db(\tails A)$ is not equivalent to the bounded derived category of any smooth projective surface (Remark \ref{rem-cor}). 
In addition, we compute explicit bases for $\mathrm{HH}^{1}(\nabla A)$ 
and $\mathrm{HH}^{2}(\nabla A)$ (Propositions \ref{basHH-1} and \ref{basHH-2}) and determine the ring structure of $\mathrm{HH}^{\ast}(\nabla A)$ with respect to the Yoneda product in the case 
$m \geq n \geq 1$ (Theorem \ref{thm-ring}).}

%%%%%%%%%%%%%%%%%%%%%%%%%%%%%%%%%%%%%%%%%%%%%%%%%%%%
\textcolor{red}{The paper is organized as follows: 
Section \ref{Prelimi} reviews definitions and known facts about AS-regular algebras, Beilinson algebras, graded down-up algebras, and Hochschild cohomology, and recalls prior dimensional results. 
Section \ref{HHgroups} carries out the rank computations and proves the main dimensional theorem (Theorem \ref{HHdim-IM}). 
Section \ref{sec-basis} constructs bases of $\mathrm{HH}^{1}(\nabla A)$ 
and $\mathrm{HH}^{2}(\nabla A)$, 
and Section \ref{sec-5} describes the Yoneda ring structure on $\mathrm{HH}^{\ast}(\nabla A)$.}

\section{Preliminaries}
\label{Prelimi}
In this section, we recall the definition of a down-up algebra and 
present our motivation for computing the Hochschild cohomology group of the Beilinson algebra of a down-up algebra. 

First, we recall the definition of \textcolor{red}{an AS-regular algebra}. 
%an Artin--Schelter regular algebra 
%(shortly, AS-regular algebra). 
This algebra has an important role in noncommutative algebraic geometry. 
%%%%%%%%%%%%%%%%%%%%%
\begin{dfn}[{\cite{AS}}]
Let $A=\bigoplus_{i \geq 0} A_{i}$ be a finitely graded algebra with \textcolor{magenta}{$A_{0}=\bbk$}. 
If $A$ satisfies the following conditions, then $A$ is called 
\textcolor{red}{a $d$-dimensional} \textit{AS-regular algebra} 
%of the dimension $d$ 
of the Gorenstein parameter of $\ell$\textup{:}
\begin{enumerate}[(i)]
\item $\mathrm{gl.dim} A =d <\infty$; 
\item
$
\textcolor{magenta}{
  \underline{\Ext}^{i}_{A}(\bbk, A)
  }
  \cong
\begin{cases}
{\bbk}(\ell) & \text{if}\, i=d,\\
0 & \text{if}\, i \neq d;\\
\end{cases}
$
\quad (\textit{Gorenstein condition}); 
\item $\mathrm{GKdim} A
:=\mathrm{inf}\, 
	\left\{
	\alpha \in \textcolor{magenta}{\mathbb{R}_{\geq 0}} \mid \mathrm{dim}_{\bbk} \left(\sum_{i=0}^{n} A_i\right) \leq n^{\alpha} \;\,\textnormal{for all}\;\, n \gg 0
	\right\}
< \infty $

\noindent
 ($\mathrm{GKdim} A$ is called the \textit{Gelfand--Kirillov dimension} of $A$). 
\end{enumerate}
\end{dfn}
%%%%%%%%%%%%%%%%%%%%
\begin{dfn}[see {\cite{MM}}]
Let $A=\bigoplus_{i \geq 0} A_{i}$ be 
\textcolor{red}{a $d$-dimensional} AS-regular algebra 
%of the dimension $d$ 
and of the Gorenstein parameter $\ell$. 
The \textit{Beilinson algebra} $\nabla A$ of $A$ is defined by
\begin{align*}
\nabla A:=
\left(\begin{array}{ccccc}
A_{0} & A_{1} & \cdots & A_{\ell-2} & A_{\ell-1}\\
0 & A_{0}  & \cdots & A_{\ell-3} & A_{\ell-2}\\ 
\vdots && \ddots && \vdots \\
\vdots &&& \ddots  & \vdots \\
\vspace{-2pt}\\
0 & \cdots &  \cdots & 0 & A_{0}\\ 
\end{array}\right)
\quad \text{with the multiplication $(a_{ij})(b_{ij})=\left( \sum _{r=0}^{\ell-1}a_{rj}b_{ir} \right)$.}
\end{align*}
\end{dfn}
%%%%%%%%%%%%%%%%%%%%
Let $A$ be an AS-regular algebra 
\textcolor{red}{
and $\mods \nabla A$ denote a category of finitely generated right $\nabla A$-modules.}
A category $\tails A$ is defined by the Serre quotient of the category of graded right $A$-modules by the subcategory of finite\textcolor{red}{-}dimensional right $A$\textcolor{red}{-}modules. 
This category $\tails A$ is called the \textit{noncommutative projective scheme associated to $A$}, 
\textcolor{blue}{which is formalized by Artin--Zhang \cite{AZ}.}

%For the relationship between a noetherian AS-regular algebra $A$ and the Beilinson algebra $\nabla A$ of $A$, 
%an important result \textcolor{red}{proved} by Minamoto--Mori \cite[Theorem 4.14]{MM} is as follows: 
\textcolor{red}{
We recall the following result of Minamoto--Mori \cite[Theorem 4.14]{MM} relating a noetherian AS-regular algebra $A$ and its Beilinson algebra $\nabla A$.}
%%%%%%%%%%%%%%%%%%%%
\begin{thm}\label{thm-MM}
Let $A$ be a coherent \textcolor{red}{$d$-dimensional} AS-regular algebra 
%of the dimension $d$ 
and $\nabla A$ \textcolor{red}{its} Beilinson algebra. 
Then the global dimension of $\nabla A$ is $d-1$ 
\textcolor{red}{and} there exists a triangulated category equivalence 
$
\Db(\tails A) \cong \Db(\mods \nabla A)
$. 
\end{thm}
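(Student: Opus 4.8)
The plan is to realize $\nabla A$ as the endomorphism algebra of a tilting object in $\Db(\tails A)$ and then to invoke a derived Morita theorem; the global dimension statement will follow afterward from the cohomological dimension of $\tails A$ together with Serre duality. Write $\pi\colon \grmod A\to\tails A$ for the quotient functor, put $\cO:=\pi A$ and $\cO(i):=\pi(A(i))$, and take as candidate tilting object
\[
\cE:=\bigoplus_{i=0}^{\ell-1}\cO(i)\ \in\ \tails A\subseteq\Db(\tails A),
\]
where $\ell$ is the Gorenstein parameter. First I would compute the morphism spaces: since a noetherian AS-regular algebra satisfies Artin--Zhang's condition $\chi$, one has $\Hom_{\tails A}(\cO,\cO(n))\cong A_n$ for $n\ge 0$, and this group vanishes for $n<0$. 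Collecting these over $0\le i,j\le\ell-1$ gives $\Hom_{\tails A}(\cO(i),\cO(j))\cong A_{j-i}$, which reproduces exactly the upper-triangular matrix algebra defining $\nabla A$, and a check on compositions identifies the convolution product with the matrix multiplication; thus $\End_{\tails A}(\cE)\cong\nabla A$.

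The delicate point is the vanishing of the higher self-extensions $\Ext^p_{\tails A}(\cE,\cE)=0$ for $p\neq 0$, which is where the Gorenstein data is used decisively. Because $\tails A$ is the noncommutative $\P^{d-1}$ attached to $A$, its cohomological dimension is $d-1$, and $\H^p(\tails A,\cO(n))=\Ext^p_{\tails A}(\cO,\cO(n))$ vanishes automatically for $0<p<d-1$. The only term that could survive is $\Ext^{d-1}_{\tails A}(\cO,\cO(n))$. Here Serre duality for $\tails A$ (available because the AS-Gorenstein property furnishes a balanced dualizing complex, with canonical sheaf $\cO(-\ell)$) identifies it with the dual of $A_{-\ell-n}$. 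Taking $n=j-i$ with $0\le i,j\le\ell-1$ gives $-\ell-n=i-j-\ell\le -1<0$, so $A_{-\ell-n}=0$ and the group vanishes. This is precisely the degree count that forces the number of summands to be exactly $\ell$, and it is the heart of the argument. Generation of $\Db(\tails A)$ by the summands $\cO(i)$ follows from the ampleness of $\cO(1)$ (equivalently, from a Koszul-type resolution built out of the minimal free resolution of $\k$ over $A$), so $\cE$ is a genuine tilting object.

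It then remains to convert tilting into a derived equivalence and to read off the global dimension. Since $\cE$ is a tilting object with $\End_{\tails A}(\cE)\cong\nabla A$, I would apply the standard tilting theorem in a dg-enhanced setting (Keller's derived Morita theory, valid because $\tails A$ carries a natural enhancement and is $\Hom$-finite by condition $\chi$) to obtain $\Db(\tails A)\cong\Db(\mod\nabla A)$. For the global dimension, under this equivalence the simple $\nabla A$-modules correspond to objects of $\Db(\tails A)$ whose mutual $\Ext$-groups are computed in $\tails A$ and hence vanish above degree $\cd(\tails A)=d-1$, giving $\gldim\nabla A\le d-1$; the reverse inequality follows by exhibiting a nonzero $\Ext^{d-1}$ between simples, again supplied by Serre duality since the top cohomology $\H^{d-1}$ is nonvanishing in a suitable degree. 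Combining the two bounds yields $\gldim\nabla A=d-1$. The main obstacle throughout is the controlled vanishing of $\Ext^{d-1}_{\tails A}$ in the second paragraph: everything hinges on matching the truncation length to the Gorenstein parameter $\ell$ via Serre duality.
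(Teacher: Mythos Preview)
The paper does not prove this theorem; it is quoted from Minamoto--Mori \cite[Theorem~4.14]{MM}, so there is no in-paper argument to compare against directly. Your outline for the derived equivalence is essentially the strategy of \cite{MM}: show that $\cE=\bigoplus_{i=0}^{\ell-1}\cO(i)$ is a tilting object by identifying $\End_{\tails A}(\cE)\cong\nabla A$, killing the higher self-extensions via Serre duality (the degree count $i-j-\ell<0$ is exactly the point), checking generation from ampleness, and then invoking Keller's derived Morita theorem. That part is sound. One small mismatch: the hypothesis is \emph{coherent}, not noetherian, so the appeals to condition~$\chi$ and to Serre duality need the coherent versions developed in \cite{MM} rather than the noetherian Artin--Zhang statements you cite.

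Your argument for $\gldim\nabla A=d-1$, however, has a genuine gap. You assert that under the equivalence the simple $\nabla A$-modules correspond to objects of $\Db(\tails A)$ ``whose mutual $\Ext$-groups are computed in $\tails A$ and hence vanish above degree $\cd(\tails A)=d-1$''. But the simples correspond to \emph{complexes}, not to sheaves, and $\Hom$ between complexes in $\Db(\tails A)$ is not bounded by the global dimension of the heart. If this style of argument worked, it would show that tilting by an object of the heart never raises global dimension, which is false already in the classical setting: a tilting module over a hereditary algebra (global dimension~$1$) can produce a tilted algebra of global dimension~$2$. So neither your upper bound nor your lower bound on $\gldim\nabla A$ is justified as written. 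A correct proof is more structural: the minimal graded free resolution of $\k$ over $A$, which has length exactly $d$ by AS-regularity and terminates in $A(-\ell)$ by the Gorenstein condition, translates into projective resolutions of the simple $\nabla A$-modules of length $d-1$; this is how \cite{MM} obtain the global dimension (packaged there through their notion of quasi-Fano algebra). The derived equivalence by itself does not deliver this number.
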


%%%%%%%%%%%%%%%%%%
%%%%%%%%%%%%%%%%%%%%

\textcolor{blue}{Theorem \ref{du->AS} motivated us to study a graded down-up algebra in this paper.}
%%%%%%%%%%%%%%%%%%%%
\begin{thm}[{\cite{KMP}}]\label{du->AS}
Let $A(\alpha, \beta)$ be a graded down-up algebra with weights  
$\deg x = n$, $\deg y = m$ and $\beta \neq 0$. 
Then $A(\alpha, \beta)$ is \textcolor{red}{a $3$-dimensional} AS-regular algebra 
%of the dimension $3$ 
and the Gorenstein parameter $\ell =2(n+m)$. 
\end{thm}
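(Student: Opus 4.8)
The plan is to exhibit the minimal graded free resolution of the trivial module $\k=A/A_{\geq 1}$ explicitly and to read off the three AS-regular data from it. Write $f_1=x^2y-\be yx^2-\al xyx$ and $f_2=xy^2-\be y^2x-\al yxy$ for the two defining relations, of degrees $2n+m$ and $n+2m$. The proposed resolution of right $A$-modules is
\[
0\to A(-2n-2m)\xrightarrow{d_3}A(-2n-m)\oplus A(-n-2m)\xrightarrow{d_2}A(-n)\oplus A(-m)\xrightarrow{d_1}A\xrightarrow{\ep}\k\to 0 ,
\]
where $\ep$ is the augmentation and, letting the matrix entries act by left multiplication,
\[
d_1=\begin{pmatrix}x & y\end{pmatrix},\quad d_2=\begin{pmatrix}xy-\al yx & y^2\\ -\be x^2 & -\be yx-\al xy\end{pmatrix},\quad d_3=\begin{pmatrix}y\\ -\be x\end{pmatrix}.
\]
First I would verify that this is a complex: the two entries of $d_1d_2$ are exactly $f_1$ and $f_2$, and $d_2d_3=0$ is a short computation using each relation once. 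Note that the twists are symmetric about $\tfrac12\ell$ with $\ell=2(n+m)$, since $0+(2n+2m)$, $\ n+(n+2m)$ and $m+(2n+m)$ all equal $2(n+m)$; this symmetry will be the source of the Gorenstein parameter.

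Before proving exactness I would establish the ring-theoretic background that requires $\be\neq 0$. Taking the degree-lexicographic order with $y>x$, the leading words of $f_1,f_2$ are $yx^2$ and $y^2x$, so that $\be\neq 0$ lets me rewrite them as the reduction rules $yx^2\mapsto \be^{-1}(x^2y-\al xyx)$ and $y^2x\mapsto \be^{-1}(xy^2-\al yxy)$. There is a \emph{unique} overlap ambiguity, at the word $y^2x^2$, and I would check that its two reductions agree; by Bergman's Diamond Lemma this shows $\{f_1,f_2\}$ is a Gr\"obner basis and yields the PBW $\k$-basis $\{x^a(yx)^by^c : a,b,c\ge 0\}$. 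Consequently $A$ is a noetherian domain, $\GKdim A=3<\infty$, and the Hilbert series is
\[
H_A(t)=\frac{1}{(1-t^{n})(1-t^{m})(1-t^{n+m})}.
\]
The domain property gives injectivity of $d_3$, while the Gr\"obner basis simultaneously produces (via the associated Anick resolution, whose chains are governed by the single ambiguity $y^2x^2$) a length-$3$ free resolution of $\k$ with free modules of ranks $1,2,2,1$ in precisely the above degrees. Comparing this with our minimal complex, whose Euler characteristic satisfies $\sum_i(-1)^iH_{P_i}(t)=H_A(t)\,(1-t^n)(1-t^m)(1-t^{n+m})=1=H_{\k}(t)$, forces our complex to be exact and to be the minimal resolution.

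From the resolution the three defining conditions follow. Its length gives $\gldim A=3=d$, and finiteness of $\GKdim A$ was recorded above, so conditions (i) and (iii) hold. For the Gorenstein condition (ii) I would apply $\uHom_A(-,A)$ to the deleted resolution: because the ranks $1,2,2,1$ are symmetric and the twists satisfy $a_i+a_{3-i}=2(n+m)$, the transposed maps $d_i^{\mathsf t}$ reassemble (after reordering summands) into the same complex shifted by $\ell=2(n+m)$. Hence $\uExt^i_A(\k,A)=0$ for $i\neq 3$ and $\uExt^3_A(\k,A)\cong\k(\ell)$ with $\ell=2(n+m)$, which is exactly the AS-Gorenstein condition in dimension $3$ with Gorenstein parameter $2(n+m)$, as claimed.

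The hard part will be the exactness step: checking $d_1d_2=d_2d_3=0$ is routine, but ruling out homology in the two middle positions is the real content. I expect to handle this through the Gr\"obner/Anick package above, where the unique overlap ambiguity at $y^2x^2$ controls both the PBW basis and the shape of the length-$3$ resolution, rather than by an unstructured degree-by-degree chase; the self-duality needed for the Gorenstein parameter is then essentially bookkeeping on the symmetric twists.
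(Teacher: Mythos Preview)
The paper does not prove this theorem at all: it is simply quoted from \cite{KMP} as background, with no argument supplied. Your plan therefore cannot be compared to the paper's proof, because there is none; what you have written is an independent, self-contained verification along the classical lines (explicit minimal resolution plus Gr\"obner/Anick control of the Hilbert series and syzygies). The matrices $d_1,d_2,d_3$ are correct, the complex and Hilbert-series checks are right, and the identification of the Gorenstein parameter via the palindromic shifts $0,\,n,\,m,\,2n+m,\,n+2m,\,2(n+m)$ is exactly how $\ell=2(n+m)$ arises.

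Two points deserve care when you write this up. First, the Euler-characteristic identity $\sum_i(-1)^iH_{P_i}(t)=1$ does \emph{not} by itself force exactness in the middle; it only says the Hilbert series of $H_1$ and $H_2$ coincide. What actually closes the gap is that the Anick resolution for your Gr\"obner basis (tips $yx^2,\,y^2x$, unique overlap $y^2x^2$) is already a \emph{minimal} free resolution with the same graded ranks, so the graded Betti numbers of $\k$ are pinned down; then the two columns of $d_2$ sit in the minimal generating degrees of $\ker d_1$ and are nonzero there, hence generate $\ker d_1$, and the remaining exactness at $P_2$ follows from the Hilbert-series equality together with injectivity of $d_3$. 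Second, ``$A$ is a noetherian domain'' is not an immediate consequence of the PBW basis; the cleanest route is the one in \cite{KMP}, realizing $A(\alpha,\beta)$ with $\beta\neq 0$ as an iterated Ore extension, which gives both properties at once and also supplies injectivity of $d_3$. With these two clarifications your argument goes through and yields precisely the statement the paper cites.
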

%%%%%%%%%%%%%%%%%%%%%
%%%%%%%%%%%%%%%%%%%%

%We recall the definition of the Hochschild cohomology group of a ${\bbk}$-algebra $B$.
%The $r$-th \textit{Hochschild cohomology group of $B$} is defined by 
\textcolor{red}{We recall the definition of the Hochschild cohomology of a $\bbk$-algebra $B$. 
The {\it $r$-th Hochschild cohomology group}}
is defined by
\[
\HH^{r}(B):=\Ext^{r}_{B^{\mathrm e}}(B, B)
\quad
(r \geq 0)
\]
where $B^{\mathrm e}\textcolor{red}{:}=B^{\mathrm op}\otimes_{\bbk} B$ is the enveloping algebra of $B$. 
%%%%%%%%%%%%%%%%%%%%
\begin{prop}[{\cite{IU}}]\label{wolg-1}
Let $A:=A(\alpha, \beta)$ be a graded down-up algebra with weights $\deg x= n$ and $\deg y =m$.
We %write
\textcolor{red}{set}  
$A':=A'(\alpha, \beta)$ as a graded down-up algebra with the generators 
$x', y'$ and weights $\deg x= n/k$ and $\deg y =m/k$ where $k:=\gcd(n, m)$.
Assume that $\beta \neq 0$. 
Then the Hochschild cohomology groups of the Beilinson algebras of the above down-up algebras 
satisfy
$
\HH^{r}(\nabla A)\cong (\HH^{r}(\nabla A'))^{k}\,
\text{for all $r\geq 0$}
$. 
\end{prop}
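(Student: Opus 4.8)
The plan is to exploit the fact that $A$ and $A'$ are isomorphic as \emph{ungraded} algebras, the only difference being their gradings. Forgetting the grading, the defining relations of $A(\al,\be)$ and of $A'(\al,\be)$ are literally the same, so $x \mapsto x'$, $y \mapsto y'$ gives an algebra isomorphism $A \xrightarrow{\ \sim\ } A'$. Under this identification the $\Z$-grading on $A$ (with $\deg x = n$, $\deg y = m$) is the $k$-fold ``stretch'' of the $\Z$-grading on $A'$ (with $\deg x' = n/k$, $\deg y' = m/k$): a word in $x,y$ with $a$ letters $x$ and $b$ letters $y$ has $A$-degree $an+bm = k\bigl(a(n/k)+b(m/k)\bigr)$, whence $A_i = 0$ unless $k \mid i$ and $A_{kt} = A'_t$ for every $t \ge 0$. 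By \thmref{du->AS} the Gorenstein parameters are $\ell = 2(n+m)$ for $A$ and $\ell' = 2(n/k+m/k) = \ell/k$ for $A'$, so $\nabla A$ is an $\ell \times \ell$ upper triangular matrix algebra whose $(i,j)$-entry is $A_{j-i}$, while $\nabla A'$ is the analogous $\ell' \times \ell'$ algebra with $(p,q)$-entry $A'_{q-p}$.

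Next I would decompose $\nabla A$ along residues modulo $k$. Indexing rows and columns of $\nabla A$ by $0,1,\dots,\ell-1$, let $e_{ij}$ be the matrix units and put $f_r := \sum_{i \equiv r \,(k)} e_{ii}$ for $r = 0,1,\dots,k-1$. Because the $(i,j)$-entry $A_{j-i}$ vanishes unless $k \mid (j-i)$, every nonzero entry of $\nabla A$ sits in a block whose row and column indices lie in a single residue class mod $k$; consequently $1 = \sum_{r=0}^{k-1} f_r$ is a decomposition into orthogonal \emph{central} idempotents, giving $\nabla A \cong \prod_{r=0}^{k-1} f_r(\nabla A)f_r$. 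Re-indexing the indices in the class $r$ as $r + kp'$ with $p' = 0,\dots,\ell'-1$, the $(p',q')$-entry of $f_r(\nabla A)f_r$ becomes $A_{k(q'-p')} = A'_{q'-p'}$, i.e.\ exactly the $(p',q')$-entry of $\nabla A'$; hence $f_r(\nabla A)f_r \cong \nabla A'$ for each $r$, and $\nabla A \cong (\nabla A')^{k}$ as $\k$-algebras.

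Finally I would invoke the standard fact that Hochschild cohomology turns finite direct products of (finite-dimensional) $\k$-algebras into direct products: for $B = B_1 \times \cdots \times B_k$ one has $B^{\mathrm{e}} \cong \prod_{s,t} B_s^{\mathrm{op}} \otimes_{\k} B_t$, and the $B$-bimodule $B = \bigoplus_s B_s$ is supported on the diagonal factors $B_s^{\mathrm{op}} \otimes_{\k} B_s$, so $\Ext^r_{B^{\mathrm{e}}}(B,B) = \bigoplus_{s} \Ext^r_{B_s^{\mathrm{e}}}(B_s,B_s)$, that is $\HH^r(B) \cong \bigoplus_{s=1}^k \HH^r(B_s)$. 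Applying this to $\nabla A \cong (\nabla A')^k$ yields $\HH^r(\nabla A) \cong (\HH^r(\nabla A'))^k$ for all $r \ge 0$. The one step that genuinely uses the hypotheses is the centrality of the $f_r$ — equivalently the vanishing of all entries $A_{j-i}$ with $j \not\equiv i \pmod k$ — which rests entirely on the stretched-grading observation of the first paragraph; everything else is bookkeeping together with the additivity of $\HH^{*}$ over products.
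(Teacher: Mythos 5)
Your argument is correct, and it is essentially the decomposition the paper itself alludes to (the introduction notes that when $\gcd(n,m)=k>1$ the Beilinson algebra ``can be decomposed into the algebra in which $n$ and $m$ are coprime''); the paper gives no proof here, simply citing \cite{IU}. Your chain --- stretched grading $A_{kt}=A'_t$ with $A_i=0$ for $k\nmid i$, central idempotents $f_r$ indexed by residues mod $k$ giving $\nabla A\cong(\nabla A')^{k}$, and additivity of $\HH^{*}$ over finite products --- is exactly the intended route and is complete.
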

%%%%%%%%%%%%%%%%%%%%%

By Proposition \ref{wolg-1}, we can assume that $\gcd(\deg x,\deg y)=\gcd(n, m)=1$ 
without loss of generality. 
%%%%%%%%%%%%%%%%%%%%%
\begin{lem}[{\cite[4.1. Lemma (i)]{CM}}]
Let $A(\alpha, \beta)$ be a graded down-up algebra with weights $(n,\,m)$. 
If $\beta \neq 0$, then $A(\alpha, \beta) \cong A(-\alpha\beta^{-1}, \beta^{-1})$ via the map interchanging $x$ and $y$. 
\end{lem}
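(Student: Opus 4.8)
The plan is to produce the isomorphism explicitly on free algebras and check that it descends to the quotients. Let $F=\k\langle x,y\rangle$, write $f_{1}=x^{2}y-\be yx^{2}-\al xyx$ and $f_{2}=xy^{2}-\be y^{2}x-\al yxy$ for the defining relations of $A=A(\al,\be)$, and let $g_{1},g_{2}$ be the corresponding relations of $A'=A(-\al\be^{-1},\be^{-1})$, which make sense because $\be\neq0$. I would take $\ph\colon F\to F$ to be the $\k$-algebra homomorphism with $\ph(x)=y$ and $\ph(y)=x$; it is an involution, hence an automorphism of $F$, and if one gives the source the weights $(\deg x,\deg y)=(n,m)$ and the target the weights $(m,n)$, then $\ph$ respects the gradings. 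The first step is the routine substitution
\[
\ph(f_{1})=y^{2}x-\be xy^{2}-\al yxy,\qquad \ph(f_{2})=yx^{2}-\be x^{2}y-\al xyx.
\]

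The second step is to recognize the right-hand sides in terms of $g_{1}$ and $g_{2}$. Expanding $-\be g_{1}$ and $-\be g_{2}$ and using $\be\cdot\be^{-1}=1$ together with $\be\cdot(-\al\be^{-1})=-\al$, one checks that $\ph(f_{1})=-\be\,g_{2}$ and $\ph(f_{2})=-\be\,g_{1}$. This is exactly where the hypothesis $\be\neq0$ enters: it is needed both to define the parameters $-\al\be^{-1}$ and $\be^{-1}$ of $A'$ and to make $-\be$ a unit. Consequently the automorphism $\ph$ carries the two-sided ideal $(f_{1},f_{2})$ of $F$ onto the two-sided ideal $(g_{1},g_{2})$, and therefore induces a graded $\k$-algebra isomorphism $A=F/(f_{1},f_{2})\xrightarrow{\ \sim\ }F/(g_{1},g_{2})=A'$ interchanging $x$ and $y$.

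That already completes the argument; as a consistency check one may note that running the same construction on $A'$ returns the down-up algebra with parameters $(-(-\al\be^{-1})\be,\ \be)=(\al,\be)$, in agreement with $\ph^{2}=\id_{F}$. There is no real obstacle here. The only point that deserves attention is the crossed bookkeeping --- that $f_{1}$ is sent to a scalar multiple of $g_{2}$ and $f_{2}$ to a scalar multiple of $g_{1}$, rather than $f_{i}$ to $g_{i}$ --- which simply reflects that interchanging $x$ and $y$ also interchanges the two defining relations of a down-up algebra.
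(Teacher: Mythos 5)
Your proof is correct, and it is exactly the standard argument: the paper itself offers no proof here but simply cites \cite[4.1 Lemma (i)]{CM}, where the isomorphism is obtained by the same substitution $x\leftrightarrow y$. I verified the key identities: with $g_{1}=x^{2}y-\be^{-1}yx^{2}+\al\be^{-1}xyx$ and $g_{2}=xy^{2}-\be^{-1}y^{2}x+\al\be^{-1}yxy$ one indeed gets $\ph(f_{1})=y^{2}x-\be xy^{2}-\al yxy=-\be g_{2}$ and $\ph(f_{2})=yx^{2}-\be x^{2}y-\al xyx=-\be g_{1}$, so the ideal is carried onto the ideal and the graded isomorphism (with the weights swapped to $(m,n)$ on the target, as you note) follows.
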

Hence, we can assume that $\deg x =n \leq \deg y =m$ without loss of generality.
%%%%%%%%%%%%%

%Also, 
\textcolor{red}{We} recall the definition of the Yoneda product of Hochschild cohomology groups of algebras (\textcolor{red}{see, for example, }\cite{W}).

%%%%
\begin{dfn}
Let $B$ be a ${\bbk}$-algebra and $P_{\bullet}=(P_{r}, \phi_{r})$ be a projective resolution of $B$ as right $B^{\mathrm{e}}$-module\textcolor{red}{s}. 
For $[f] \in \HH^{p}(B)$ and $[g] \in \HH^{q}(B)$ represented by $f \in \Hom_{\bbk}(P_{p}, B)$ and $g \in \Hom_{\bbk}(P_{q}, B) $, the \textit{Yoneda product} $[f] \smile [g]$ is the residue class of the ${\bbk}$-linear homomorphism $f\tau_{q}$ defined by as follows\textcolor{red}{: } 
There exists a chain map $(\tau_{\bullet})$ from
\[
\cdots \rightarrow P_{{q}+{p}} \xrightarrow{\phi_{{q}+{p}}} \cdots \xrightarrow{\phi_{{q}+2}} P_{{q}+1} \xrightarrow{\phi_{{q}+1}} P_{q}
\xrightarrow{g} B \rightarrow 0 \rightarrow \cdots
\]
to
\[
\cdots \rightarrow P_{p} \xrightarrow{\phi_{p}} \cdots \xrightarrow{\phi_{2}} P_{1} \xrightarrow{\phi_{1}} P_{0}
\xrightarrow{\phi_{0}} B \rightarrow 0 \rightarrow \cdots
\]
such that $\tau_{-1}=id_{B}$, 
that is, 

\hspace{50pt}
\xymatrix{
\cdots \ar[r] & P_{{q}+{p}} \ar[r]^{\phi_{{q}+{p}}}  \ar[d]^{\tau_{p}} & \cdots \ar[r]^{\phi_{{q}+2}} & P_{{q}+1} \ar[r]^{\phi_{{q}+1}} \ar[d]^{\tau_{1}} & P_{q} \ar[r]^{g} \ar[d]^{\tau_{0}} & B \ar[r] \ar[d]^{\tau_{-1}=\mathrm{id}_{B}} & 0 \ar[r] & \cdots \\
\cdots \ar[r] & P_{p} \ar[r]^{(-1)^{q}\phi_{p}} \ar[d]^{f} & \cdots \ar[r]^{(-1)^{q}\phi_{2}} & P_{1} \ar[r]^{(-1)^{q}\phi_{1}} & P_{0} \ar[r]^{\phi_{0}} & B \ar[r] & 0 \ar[r] & \cdots \\
& B
}
\end{dfn}
%%%%%%%%%

%%%%%%%%%%%%%%%%%%%%%%
\begin{thm}[{see \cite[Section 2.2]{W}}]\label{gr-com}%Theorem 1.4.6 and 
The Yoneda product on the Hochschild cohomology group $\HH^{r}(B)$ of a ${\bbk}$-algebra $B$ is graded commutative, that is, $\mathsf{f} \in \HH^{p}(B)$ and $\mathsf{g} \in \HH^{q}(B)$ satisfy
$
\mathsf{f} \smile \mathsf{g}=(-1)^{{p}{q}}\mathsf{g} \smile \mathsf{f}
$. 
\end{thm}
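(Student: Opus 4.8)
The plan is to pass to the bar resolution, where the Yoneda product becomes the Hochschild cup product, and then run Gerstenhaber's classical argument. First I would recall that the Yoneda product $\smile$ defined above does not depend on the chosen projective resolution of $B$ over $B^{\mathrm e}$, so we may compute $\HH^{\bullet}(B)$ from the bar complex: with $\mathbb{B}_{n}(B)=B^{\otimes(n+2)}$ and the usual differentials one has $\Hom_{B^{\mathrm e}}(\mathbb{B}_{n}(B),B)\cong\Hom_{\k}(B^{\otimes n},B)=:C^{n}(B)$, the Hochschild differential becomes the usual $\delta$, and (this is where I would invoke \cite[Section~2.2]{W}) under this identification the Yoneda product is carried to the cup product
\[
(f\smile g)(a_{1}\otimes\cdots\otimes a_{p+q})=f(a_{1}\otimes\cdots\otimes a_{p})\,g(a_{p+1}\otimes\cdots\otimes a_{p+q}),\qquad f\in C^{p}(B),\ g\in C^{q}(B).
\]
The case $p=0$ or $q=0$ I would dispose of at once: a $0$-cocycle is a central element $z\in B$, and on cochains $(z\smile g)(\vec a)=z\,g(\vec a)=g(\vec a)\,z=(g\smile z)(\vec a)$ because $z$ is central, so no sign issue arises there.

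For $p,q\geq 1$ I would introduce Gerstenhaber's circle product $f\bar{\circ}\,g:=\sum_{i=1}^{p}(-1)^{(q-1)(i-1)}\,f\circ_{i}g\in C^{p+q-1}(B)$, where $f\circ_{i}g$ substitutes $g$ into the $i$-th argument of $f$, and then verify Gerstenhaber's fundamental identity, which up to the signs fixed by the chosen conventions reads
\[
\delta(f\bar{\circ}\,g)\;=\;\pm\,(\delta f)\bar{\circ}\,g\;\pm\;f\bar{\circ}\,(\delta g)\;\pm\;\bigl(f\smile g-(-1)^{pq}\,g\smile f\bigr).
\]
Granting this, the theorem is immediate: if $\mathsf{f}=[f]$ and $\mathsf{g}=[g]$ with $\delta f=\delta g=0$, the first two terms on the right vanish, so $f\smile g-(-1)^{pq}g\smile f=\pm\,\delta(f\bar{\circ}\,g)$ is a coboundary, whence $\mathsf{f}\smile\mathsf{g}=(-1)^{pq}\,\mathsf{g}\smile\mathsf{f}$ in $\HH^{p+q}(B)$.

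The hard part is the verification of the fundamental identity: expanding $\delta(f\circ_{i}g)$, $(\delta f)\circ_{i}g$ and $f\circ_{i}(\delta g)$ and summing over $i$ with the Koszul signs, one must check that all internal terms cancel in pairs and that exactly the two cup-product terms and the two $\delta$-terms survive---a bookkeeping computation that is entirely standard but unforgiving about signs. Since it is classical, I would either carry it out in detail or simply cite \cite[Section~2.2]{W} (or Gerstenhaber's original paper) for it. An alternative, conceptually cleaner route that sidesteps the brace calculus is the graded Eckmann--Hilton argument: one shows that on $\Ext^{\bullet}_{B^{\mathrm e}}(B,B)$ the Yoneda (composition) product and the product induced by the bimodule isomorphism $B\otimes_{B}B\cong B$ coincide, share the unit $\mathrm{id}_{B}$, and satisfy an interchange law, so that the graded Eckmann--Hilton lemma forces both products to be graded commutative; the price there is setting up the second product and the interchange law precisely.
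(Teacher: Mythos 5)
This theorem is recalled from the literature and the paper supplies no proof of its own, only the citation to \cite[Section 2.2]{W}; your outline correctly reproduces the standard argument found there (and in Gerstenhaber's original paper): pass to the bar resolution where the Yoneda product becomes the cup product, dispose of the degree-zero case by centrality, and in positive degrees exhibit $f\smile g-(-1)^{pq}g\smile f$ as the coboundary of the circle product $f\bar{\circ}\,g$ via Gerstenhaber's homotopy identity. The only part you defer is the sign-by-sign verification of that identity, which is exactly the content of the cited source, so this is an acceptable match with the paper's treatment.
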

%%%%%%%%%

%%%%%%%%%%%%%%%%%
%In this paper, we discuss the Hochschild cohomology group of the Beilinson algebra of a graded down-up algebra. 
%Then there is no loss of generality if we may assume that $m\geq n$. 
%In addition, by \cite[Theorem 1.4]{IU}, 
%it is enough to consider the case that $\gcd (n,\,m)=1$. 
\textcolor{red}{Throughout this paper, we assume $\gcd (n,\,m)=1$ without loss of generality. 
Moreover, by \cite[Theorem 1.4]{IU}, it suffices to consider the case $\gcd (n,\,m)=1$.}
%Also, 
\textcolor{red}{When} $\deg x =n =1$, the dimensions of the Hochschild cohomology group of the Beilinson algebra of the graded down-up algebra were already 
%known 
\textcolor{red}{given} by Belmans \cite{Bel} \textcolor{red}{(for $n=m=1$)}
and by the first author and Ueyama \cite{IU} \textcolor{red}{(for $n=1$ and $m\geq 2$)}. 
%%%%%%%%%%%%%%%%%%%%%%%%%%%%%%%%%

%Before introducing some previous researches, we define some notations: 
%We define a sequence 
\textcolor{red}{Before recalling the previous results of \cite{Bel} and \cite{IU}, 
we recall some notation used below. Define the sequence}
\begin{align*}
\lambda_{r+2}\textcolor{red}{:}=\alpha \lambda_{r+1} + \beta \lambda_{r}
\text{ with }
\lambda_{0}\textcolor{red}{:}=0 \, \text{ and } \, \lambda_{-1} \textcolor{red}
{:}= \beta^{-1}.
\end{align*}
Here, we \textcolor{red}{introduce} the following conditions:

%\begin{table}[htp]
%%\caption{default}
%\begin{center}
%\begin{tabular}{|c|c|c|c|}
%%\label{default}
%Condition 1 & Case I: $n+m$ is even and $\alpha= 0$ & Case II: otherwise. & \quad \\
%Condition 2 & Case 1: $\lambda_{m+1} = 0$ 
%& Case 2: $\lambda_{m+1} \neq 0$ and $\alpha^2 + 4\beta = 0$ 
%& Case 3: $\lambda_{m+1} \neq 0$ and $\alpha^2 + 4\beta \neq 0$.
%\end{tabular}
%\end{center}
%\label{default}
%\end{table}%
%
%%%%%%%%%%%%%%%%%%%%%%%%%%%%%%%%%
%%%%%%%%%%%%%%%%%%%%%%%%%%%%%%%%%
\begin{center}
\renewcommand{\arraystretch}{1.3}
\begin{tabular}{|c|c|p{60pt}|c|c|}
%
%%%%%%%%%%%%%%%%%%%%%%%
\multicolumn{2}{c}{Condition $1$}& \multicolumn{1}{c}{} &\multicolumn{2}{c}{Condition $2$}
\\ \cline{1-2} \cline{4-5}
%%%%%%%%%%%%%%%%%%%%%%%
%%%%%%%%%%%%%%%%%%%%%%%
%
\textrm{Case I} & $n+m$ is even and $\alpha=0$
&%%%%%%%%%%%%%%%%%%%%%%
&
Case $\mathrm{1}$ & $\lambda_{m+1}=0$
%%%%%%%%%%%%%%%%%%%%%%%
\\ \cline{1-2} \cline{4-5}
%%%%%%%%%%%%%%%%%%%%%%%
%%%%%%%%%%%%%%%%%%%%%%%
%
\textrm{Case I\hspace{-1.2pt}I} & otherwise
&%%%%%%%%%%%%%%%%%%%%%%
&
Case $\mathrm{2}$ & $\lambda_{m+1} \neq 0$ and $\alpha^{2}+4\beta=0$
%%%%%%%%%%%%%%%%%%%%%%%
\\ \cline{1-2} \cline{4-5}
%%%%%%%%%%%%%%%%%%%%%%%
%%%%%%%%%%%%%%%%%%%%%%%
%
\multicolumn{2}{c}{}
&%%%%%%%%%%%%%%%%%%%%%%
&
Case $\mathrm{3}$ & $\lambda_{m+1} \neq 0$ and $\alpha^{2}+4\beta \neq0$
%%%%%%%%%%%%%%%%%%%%%%%
\\ \cline{4-5}
%%%%%%%%%%%%%%%%%%%%%%%
\end{tabular}
\renewcommand{\arraystretch}{1.0}
\end{center}
%%%%%%%%%%%%%%%%%%%%%%%%%%%%%%%%%
\begin{thm}
[{\cite[Table 2]{Bel}}]\label{thm-Bel}
Let $A=A(\alpha, \beta)$ be a graded down-up algebra with weights 
$\mathrm{deg}\ x =n =1$, $\mathrm{deg}\ y =m =1$, $\beta \neq 0$, 
and $\nabla A$ \textcolor{red}{its} Beilinson algebra.
Then we obtain the \textcolor{red}{dimension} formula of the Hochschild cohomology groups 
$\HH^{r}(\nabla A)$ %of $\nabla A$ 
as follows\textup{:}
\begin{itemize}
\item $\dim_{\bbk} \HH^0(\nabla A)=1;$
\item 
$\dim_{\bbk} \HH^1(\nabla A)=
\begin{cases}
6 \quad \text{\textup{for Case I (and Case $1$)}} , \\
3 \quad \text{\textup{for Case I\hspace{-1.2pt}I and Case $2$}} ,\\
1 \quad \text{\textup{for Case I\hspace{-1.2pt}I and Case $3$}} ;
\end{cases}$
\item $\dim_{\bbk} \HH^2(\nabla A)=
\begin{cases}
9 \quad \text{\textup{for Case I (and Case $1$)}}, \\
6 \quad \text{\textup{for Case I\hspace{-1.2pt}I and Case $2$}},\\
4 \quad \text{\textup{for Case I\hspace{-1.2pt}I and Case $3$}};
\end{cases}$
\item $\dim_{\bbk} \HH^{r}(\nabla A)= 0$ for ${r}\geq 3$. 
\end{itemize}
\end{thm}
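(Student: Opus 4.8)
This is Belmans' theorem \cite[Table~2]{Bel}, which he proves geometrically — by computing the Hochschild cohomology of the noncommutative quadric $\tails A$ and transporting it along the derived equivalence $\Db(\tails A)\cong\Db(\mod\nabla A)$ of Theorem~\ref{thm-MM}. I describe instead how to recover it by the algebraic method used in the rest of this paper, specialized to $n=m=1$. Since $n=m=1$, the algebra $A$ is a $3$-dimensional \emph{cubic} AS-regular algebra with Gorenstein parameter $\ell=2(n+m)=4$ (Theorem~\ref{du->AS}), so by Theorem~\ref{thm-MM} the Beilinson algebra $\nabla A$ is a finite-dimensional $\k$-algebra with $\gldim\nabla A=2$. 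Concretely $\nabla A\cong\k Q/I$, where $Q$ is the Beilinson quiver $1\rightrightarrows2\rightrightarrows3\rightrightarrows4$ (a copy of the $2$-dimensional space $A_1$ as arrows between each pair of consecutive vertices) and $I$ is generated by the two cubic defining relations of $A$, read on the length-$3$ paths from $1$ to $4$; there are no relations of length $2$, since $A_1\otimes_\k A_1\to A_2$ is an isomorphism for a cubic AS-regular algebra. Consequently $\nabla A$ has a minimal projective bimodule resolution of length $2$,
\[
0\longrightarrow P^2\longrightarrow P^1\longrightarrow P^0\longrightarrow\nabla A\longrightarrow0,
\]
where $P^0$, $P^1$, $P^2$ are free $(\nabla A)$-bimodules with one generator for each vertex, each arrow, and each of the two relations of $Q$, respectively.

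Applying $\Hom_{(\nabla A)^{\mathrm e}}(-,\nabla A)$ turns this into a three-term complex
\[
0\longrightarrow C^0\xrightarrow{\ d^0\ }C^1\xrightarrow{\ d^1\ }C^2\longrightarrow0
\]
computing $\HH^{\bullet}(\nabla A)$, in which $C^0$ has one copy of $e_i(\nabla A)e_i=A_0$ per vertex $i$, $C^1$ one copy of $A_1$ per arrow, and $C^2$ one copy of $A_3$ per relation. Reading $\dim_\k A_0=1$, $\dim_\k A_1=2$, $\dim_\k A_3=6$ off the Hilbert series of the cubic algebra gives $\dim_\k C^0=4$, $\dim_\k C^1=6\cdot2=12$, $\dim_\k C^2=2\cdot6=12$. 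Since the complex has only three nonzero terms, $\HH^r(\nabla A)=0$ for $r\ge3$; and $\HH^0(\nabla A)=Z(\nabla A)=\k$ because $\nabla A$ is basic with connected acyclic quiver, so $\rank d^0=\dim_\k C^0-\dim_\k\HH^0=3$. Writing $s:=\rank d^1$ we get
\[
\dim_\k\HH^1(\nabla A)=\dim_\k C^1-\rank d^0-s=9-s,\qquad\dim_\k\HH^2(\nabla A)=\dim_\k C^2-s=12-s,
\]
so the whole theorem reduces to showing that $s=3$ in Case~I (equivalently Case~$1$), $s=6$ in Case~II with Case~$2$, and $s=8$ in Case~II with Case~$3$.

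It therefore remains to compute $s=\rank d^1$, and this is the heart of the matter. The map $d^1\colon C^1\to C^2$ is the Jacobian (Fox-derivative) matrix of the two cubic relations of $A$: in fixed $\k$-bases of $C^1$ and $C^2$ it becomes a $12\times12$ matrix $M(\al,\be)$ whose entries are the structure constants of $A$ in degrees $\le3$, computed after reducing monomials to normal form. The auxiliary sequence $\lambda_{r+2}=\al\lambda_{r+1}+\be\lambda_r$ (with $\lambda_0=0$, $\lambda_{-1}=\be^{-1}$) is precisely the bookkeeping for this normal-form reduction, and the rank of $M(\al,\be)$ turns out to be governed by the vanishing of $\lambda_{m+1}=\lambda_2=\al$ and of the discriminant $\al^2+4\be$ of the recursion; since $\be\neq0$, the loci $\al=0$ and $\al^2+4\be=0$ are disjoint and stratify the parameter space into exactly the three cases $\lambda_2=0$, $\lambda_2\neq0$ with $\al^2+4\be=0$, and $\lambda_2\neq0$ with $\al^2+4\be\neq0$. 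A direct row/column reduction of $M(\al,\be)$ on each stratum gives $\rank d^1=3,6,8$ respectively, and substituting into the displayed formulas yields $\dim_\k\HH^1=6,3,1$ and $\dim_\k\HH^2=9,6,4$, which is the assertion. (Conceptually, $\al^2+4\be$ and $\lambda_{m+1}$ govern the degeneration type of the point scheme of the cubic AS-regular algebra $A$, and $\rank d^1$ records the number of unobstructed first-order deformations of $\tails A$; this is the geometry underlying Belmans' original argument.)

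The one genuinely substantial step is this rank computation: writing $d^1$ as an explicit matrix requires a careful choice of normal form in $A_2$ and $A_3$ and a correct setup of the sequence $(\lambda_r)$, since the Fox derivatives of the two cubic relations feed directly into its entries, and then the rank must be computed separately on each of the three strata, with the verification that these exhaust the parameter space and that the rank is locally constant. Everything else — the dimensions of the $C^i$, the equality $\HH^0=\k$, and the vanishing for $r\ge3$ — is formal once the length-$2$ bimodule resolution is in place. For $m\ge n>1$ the same scheme applies, but the Beilinson quiver and the number of relations grow with $\ell=2(n+m)$ and the relations no longer lie in a single path length, so the representation matrices and the accompanying case analysis become considerably heavier; this is the content of Section~\ref{HHgroups}.
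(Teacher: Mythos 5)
The paper itself gives no proof of this statement: it is quoted verbatim from Belmans \cite[Table 2]{Bel}, who obtains it by a geometric computation on the noncommutative quadric. Your proposal therefore necessarily takes a different route, namely the Green--Snashall method that the paper uses for its new cases, specialized to $n=m=1$, and that route is sound. Your dimension counts agree with the paper's own data: Lemma \ref{bas-IM} gives $\dim_{\k}\widehat{P^0}=4$, $\dim_{\k}\widehat{P^1}=12$, $\dim_{\k}\widehat{P^2}=12$ for $m=n=1$ (your identification of $e_1(\nabla A)e_4$ with $A_3$ and $\dim_{\k}A_3=6$ is correct), and your reduction of the whole theorem to $s:=\rank\widehat{\partial^2}\in\{3,6,8\}$ is exactly right, including the observation that $\be\neq 0$ makes the three strata $\al=0$, $\al\neq0\ \&\ \al^2+4\be=0$, $\al\neq0\ \&\ \al^2+4\be\neq0$ disjoint and exhaustive (so that Case I forces Case $1$). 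The one step you assert rather than carry out --- the rank of the $12\times 12$ matrix on each stratum --- is the crux, but it does check out against the paper's own matrices: by Lemma \ref{par-IM}(1) the representation matrix block-decomposes so that $\rank M_2=\rank L_1+2\rank L_2$, and for $m=n=1$ one computes $\rank L_1=1$ if $\al=0$ and $2$ otherwise (the two $\be$-rows coincide and the $\be$-row is a combination of the two $\al$-rows), while $\det L_2=\al^2(\al^2+4\be)$ gives $\rank L_2=1,2,3$ on the three strata, yielding $s=3,6,8$ as you claim. So your argument is correct and complete modulo that routine reduction; what Belmans' geometric approach buys instead is the Lie/HKR-type interpretation of $\HH^1$ and $\HH^2$, which the linear algebra does not see, while your approach has the advantage of being uniform with the paper's treatment of the cases $m\geq n>1$.
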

%%%%%%%%%%%%%%%%%%%%%%%%%%%%%%%%%%
\begin{thm}
[{\cite[Theorem1.4]{IU}}]\label{thm-IU}
Let $A=A(\alpha, \beta)$ be a graded down-up algebra with weights 
$\mathrm{deg}\,x =n =1$, $\mathrm{deg}\,y =m > 1$, $\beta \neq0$, 
and $\nabla A$ \textcolor{red}{its} Beilinson algebra.
Then we obtain the \textcolor{red}{dimension} formula of the Hochschild cohomology groups 
$\HH^{r}(\nabla A)$ %of $\nabla A$ 
as follows\textup{:}
\begin{itemize}
\item $\dim_{\bbk} \HH^0(\nabla A)=1;$
\item 
$\dim_{\bbk} \HH^1(\nabla A)=
\begin{cases}
4 \quad \text{\textup{for Case I (and Case $1$)}}, \\
3 \quad \text{\textup{for Case I\hspace{-1.2pt}I and Case $1$}},\\
2 \quad \text{\textup{for Case I\hspace{-1.2pt}I and Case $2$}},\\
1 \quad \text{\textup{for Case I\hspace{-1.2pt}I and Case $3$}};
\end{cases}$
\item $\dim_{\bbk} \HH^2(\nabla A)=
\begin{cases}
8 \quad \text{\textup{for $m=2$, Case I\hspace{-1.2pt}I, and Case $1$}}, \\
7 \quad \text{\textup{for $m=2$, Case I\hspace{-1.2pt}I, and Case $2$}}, \\
6 \quad \text{\textup{for $m=2$, Case I\hspace{-1.2pt}I, and Case $3$}}, \\
m+5 \quad \text{\textup{for $m \geq 3$ and Case I (and Case $1$)}}, \\
m+4 \quad \text{\textup{for $m \geq 3$, Case I\hspace{-1.2pt}I, and Case $1$}},\\
m+3 \quad \text{\textup{for $m \geq 3$, Case I\hspace{-1.2pt}I, and Case $2$}},\\
m+2 \quad \text{\textup{for $m \geq 3$, Case I\hspace{-1.2pt}I, and Case $3$}};
\end{cases}$
\item $\dim_{\bbk} \HH^{r}(\nabla A)= 0$ for ${r}\geq 3$. 
\end{itemize}
\end{thm}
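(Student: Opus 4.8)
The plan is to realize $\nabla A$ as a bound quiver algebra $\k Q/I$ and to read off $\HH^\ast(\nabla A)$ from the minimal projective resolution of $\nabla A$ as a bimodule. By \thmref{thm-MM} and \thmref{du->AS} we have $\gldim \nabla A = 3-1 = 2$, so the resolution has length two,
\[
0 \to Q_2 \xrightarrow{\ d_2\ } Q_1 \xrightarrow{\ d_1\ } Q_0 \to \nabla A \to 0,
\]
where $Q_0 = \bigoplus_v \nabla A\,e_v \otimes e_v\nabla A$ runs over vertices $v$, $Q_1 = \bigoplus_a \nabla A\,e_{s(a)} \otimes e_{t(a)}\nabla A$ over arrows $a$, and $Q_2 = \bigoplus_\rho \nabla A\,e_{s(\rho)} \otimes e_{t(\rho)}\nabla A$ over a minimal set of relations $\rho$. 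Applying $\Hom_{(\nabla A)^{\mathrm e}}(-,\nabla A)$ and using $\Hom_{(\nabla A)^{\mathrm e}}(\nabla A\,e_i \otimes e_j\nabla A,\nabla A) \cong e_i(\nabla A)e_j \cong A_{j-i}$, the Hochschild cohomology becomes the cohomology of the three-term complex of finite-dimensional $\k$-spaces
\[
0 \to \textstyle\prod_v e_v(\nabla A)e_v \xrightarrow{\ d^0\ } \prod_a e_{s(a)}(\nabla A)e_{t(a)} \xrightarrow{\ d^1\ } \prod_\rho e_{s(\rho)}(\nabla A)e_{t(\rho)} \to 0.
\]
Hence $\HH^r(\nabla A)=0$ for $r\ge 3$; and since $Q$ is finite, connected and acyclic, $e_v(\nabla A)e_v = \k e_v$ and $Z(\nabla A)=\k$, giving $\dim_\k \HH^0(\nabla A) = \dim_\k \Ker d^0 = 1$. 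This disposes of the first and last bullets, and leaves $\Ker d^1/\Im d^0$ and $\Coker d^1$.

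\textbf{Combinatorics of the quiver.} With $n=1$ the Gorenstein parameter is $\ell = 2(1+m)$, so $Q$ has vertices $0,1,\dots,2m+1$. The degree-$1$ generator $x$ yields arrows $i\to i+1$ $(0\le i\le 2m)$ and the degree-$m$ generator $y$ yields arrows $i\to i+m$ $(0\le i\le m+1)$, while the defining relations $r_1 = x^2y-\be yx^2-\al xyx$ and $r_2 = xy^2-\be y^2x-\al yxy$, of degrees $m+2$ and $2m+1$, placed at every admissible vertex give $m$ relations of the first type and one of the second. Using the Hilbert series $H_A(t) = \big((1-t)(1-t^m)(1-t^{m+1})\big)^{-1}$, which records $\dim_\k A_d = \dim_\k e_i(\nabla A)e_{i+d}$, I would tabulate the three terms: $\dim\prod_v = 2m+2$, $\dim\prod_a = (2m+1)+(m+2)\dim_\k A_m$, and $\dim\prod_\rho = m\dim_\k A_{m+2}+\dim_\k A_{2m+1}$. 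For $m\ge 3$ one gets $\dim_\k A_m = 2$, $\dim_\k A_{m+2}=3$, $\dim_\k A_{2m+1}=5$, so $\dim\prod_a = 4m+5$ and $\dim\prod_\rho = 3m+5$; for $m=2$ the coincidence $2m = m+2$ inserts the extra monomial $y^2 \in A_{m+2}$, raising $\dim_\k A_4$ by one and hence $\dim\prod_\rho$ by $m=2$. This single coincidence is exactly what produces the separate $m=2$ row in the statement.

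\textbf{Reduction to one rank.} The map $d^0$ sends $(c_v e_v)_v$ to $\big((c_{s(a)}-c_{t(a)})a\big)_a$, so $\dim_\k\Im d^0 = \ell-1 = 2m+1$, and therefore
\[
\dim_\k\HH^1(\nabla A) = \dim\textstyle\prod_a - \rank d^1 - (2m+1), \qquad \dim_\k\HH^2(\nabla A) = \dim\textstyle\prod_\rho - \rank d^1 .
\]
Thus both nontrivial bullets are controlled by the single quantity $\rank d^1$. Here $d^1$ is the Leibniz substitution dual to $d_2$: a tuple $(\eta_a)_a$ is sent, for each relation $\rho = \sum_p c_p\,p$ with $p = a_1\cdots a_k$, to $\sum_p c_p\sum_j (a_1\cdots a_{j-1})\,\eta_{a_j}\,(a_{j+1}\cdots a_k)$. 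Writing this out in the monomial bases of the relevant graded pieces of $A$ produces the explicit representation matrix whose rank must be found.

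\textbf{The rank computation (main obstacle).} The heart of the argument is the rank of $d^1$ in each case, and this is where the sequence $\la_{r+2} = \al\la_{r+1}+\be\la_r$ enters: reducing monomials such as $x^ky$ and $xy^k$ to normal form in $A$ yields coefficients that are polynomials in the $\la_r$, and the characteristic polynomial $t^2-\al t-\be$ of the recursion has discriminant $\al^2+4\be$. I would block-triangularize the representation matrix and show that $\rank d^1 = 2m+1,\,2m+2,\,2m+3$ according to Cases $1$, $2$, $3$ (governed respectively by $\la_{m+1}=0$, by the double-root locus $\al^2+4\be=0$, and by the simple-root locus $\al^2+4\be\ne 0$), with one further drop to $2m$ in Case I. The key bookkeeping is that Case I ($n+m$ even and $\al=0$, i.e.\ $m$ odd and $\al=0$) forces all even-index $\la_r$, and in particular $\la_{m+1}$, to vanish, so Case I lies inside Case $1$, while the extra symmetry $x^2y=\be yx^2$, $xy^2=\be y^2x$ kills one additional row. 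Substituting these four ranks into the two displayed formulas reproduces every listed value of $\HH^1$ and $\HH^2$, including the shifted $m=2$ row coming from the $y^2$ coincidence above. The delicate part will be verifying the exact rank drop in each degenerate case and confirming that no further accidental dependencies arise for small $m$.
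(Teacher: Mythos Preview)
The statement you are proving is quoted from \cite{IU} and is not re-proved in the present paper; it is recalled as background, and the paper then applies the identical machinery to the new case $m>n>1$ in Section~\ref{HHgroups}. So the relevant comparison is to the method of \cite{IU}, which the paper mirrors.

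Your strategy is that method: build the Green--Snashall bimodule resolution, reduce both $\HH^1$ and $\HH^2$ to the single rank of $d^1$, and read off the answers. Your dimension counts for the three terms are correct and agree with Lemma~\ref{bas-IU} and Proposition~\ref{par-IU}; your observation that the $m=2$ row comes from the coincidence $y^2\in A_{m+2}$ is exactly right; and the rank values $2m,\,2m+1,\,2m+2,\,2m+3$ you assert are precisely the ones that reproduce the stated formulas. Your remark that Case~I forces $\lambda_{m+1}=0$, hence lies inside Case~1, is also correct and is used implicitly in \cite{IU}.

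What you flag as the ``main obstacle'' is, however, the entire substance of the proof, and your proposal stops at the claim. What \cite{IU} (and this paper, for $n>1$) actually does is more structured than a generic block-triangularization: the representation matrix $M_2$ of $\widehat{\partial^2}$ decomposes as a direct sum $L_1\oplus L_2$ (with one or two trailing zero rows), where $L_1$ is a $2(m+1)\times 3(m+1)$ block whose rank is governed solely by the Case~I/II dichotomy, and $L_2$ is an $(m+2)\times(m+2)$ block built from the $\lambda_r$ that carries the Case~1/2/3 trichotomy. One computes $\rank L_1\in\{m,\,m+1\}$ by reducing to a circulant matrix with associated polynomial $x^m+1$ (see Lemma~\ref{rank-L1}), and $\rank L_2\in\{m,\,m+1,\,m+2\}$ by an explicit row-reduction using the recursion. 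In particular, the ``one further drop in Case~I'' you describe is the drop in $\rank L_1$, while the Case~1/2/3 drops live entirely in $\rank L_2$; there is no interaction between the blocks. Your proposal has the right targets and the right heuristics, but without isolating the $L_1\oplus L_2$ split and the circulant argument you have not yet shown that no further accidental dependencies occur---which is exactly the worry you voice in your last sentence.
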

%%%%%%%%%%%%%%%%%%%%%%%%%%%%%%%%%
\section{Hochschild cohomology groups}
\label{HHgroups}
%In this section, in a similar way to \cite{IU}, we will compute the Hochschild cohomology group of the Beilinson algebras $\nabla A$ of a graded down-up algebras $A=A(\alpha, \beta)$ with weights $\mathrm{deg}\, x=n$, $\mathrm{deg}\, y=m$ and $\beta\neq0$, where $m \geq n >1$ and $\mathrm{gcd}(n, m)=1$. 
\textcolor{red}{In this section, using the algebraic methods of \cite{IU}, 
we compute the Hochschild cohomology groups of the Beilinson algebra $\nabla A$ of the graded down-up algebra $A=A(\alpha, \beta)$ (with $\mathrm{deg}\, x=n$, $\mathrm{deg}\, y=m$ and $\beta\neq0$), under the assumptions $m \geq n >1$ and $\mathrm{gcd}(n, m)=1$.}

\textcolor{red}{Since the Beilinson algebra $\nabla A$ is finite-dimensional over ${\bbk}$
by Theorem \ref{thm-MM}, it can be presented by the quiver 
$${\cQ}_{\nabla A}=(({\cQ}_{\nabla A})_{0}, ({\cQ}_{\nabla A})_{1}, s, t)$$ 
with an admissible ideal $\mathcal{I}$ of ${\bbk}{\cQ}_{\nabla A}$. 
The vertex and arrow sets are given by}
%
%The Beilinson algebra $\nabla A$ is a finite\textcolor{red}{-}dimensional ${\bbk}$-algebra, 
%and 
%\textcolor{red}{
%hence can be presented by the following quiver ${\cQ}_{\nabla A}=(({\cQ}_{\nabla A})_{0}, ({\cQ}_{\nabla A})_{1}, s, t)$ 
%with }
%the admissible ideal $\mathcal{I}$ of ${\bbk}{\cQ}_{\nabla A}$\textup{:}
\[
\begin{cases}
({\cQ}_{\nabla A})_{0} := \{ e_{1},\, e_{2},\, \ldots, e_{2(n+m)} \} ,\\
({\cQ}_{\nabla A})_{1} := \{ x_{1},\, x_{2}, \ldots,\,x_{n+2m},\, y_{1},\, y_{2}, \ldots,\,y_{2n+m} \} ,\\
s(x_{i}):=e_{i}, t(x_{i})=e_{i+n} \ \text{for} \ 1 \leq i \leq n+2m, \\
s(y_{j}):=e_{j}, t(y_{j})=e_{j+m} \ \text{for} \ 1 \leq j \leq 2n+m\textcolor{red}{,} \\
\end{cases}
\]
%and the admissible ideal $\mathcal{I}$ of ${\bbk}{\cQ}_{\nabla A}$ 
\textcolor{red}{and $\mathcal{I}$ is generated} by the relations
\[
\begin{cases}
f_{i}:= x_{i}x_{i+n}y_{i+2n} -\alpha x_{i}y_{i+n}x_{i+n+m} -\beta y_{i}x_{i+m}x_{i+n+m}
\quad\text{for} \ 1 \leq i \leq m,\\
g_{j}:= x_{j}y_{j+n}y_{j+n+m} -\alpha y_{j}x_{j+m}y_{j+n+m} -\beta y_{j}y_{j+m}x_{j+2m}
\quad\text{for} \ 1 \leq j \leq n.
\end{cases}
\]
%%%%%%%%
For the case $m>n>1$, the simplest case we deal in this section is the case that $(n,m)=(2,3)$. 
%The following figure is the %ordinary 
%quiver of the Beilinson algebra of the down-up algebra
%for the case that $(n, m)=(2, 3)$: 
\textcolor{red}{The following figure shows the quiver of the Beilinson algebra for $(n, m) = (2, 3)$:}
%%%%%%%%%%%%%%%%%%%%%%%%%%
\[
\hspace{-33pt}
   \xymatrix{
    & 1 \ar@/^{15pt}/@{->}[rr]^-{x_{1}} \ar@/^{-15pt}/@{->}[rrr]_{y_{1}}
    & 2 \ar@/^{15pt}/@{->}[rr]^-{x_{2}} \ar@/^{-15pt}/@{->}[rrr]_{y_{2}}
    & 3 \ar@/^{15pt}/@{->}[rr]^-{x_{3}} \ar@/^{-15pt}/@{->}[rrr]_{y_{3}}
    & 4 \ar@/^{15pt}/@{->}[rr]^-{x_{4}} \ar@/^{-15pt}/@{->}[rrr]_{y_{4}}
    & 5 \ar@/^{15pt}/@{->}[rr]^-{x_{5}} \ar@/^{-15pt}/@{->}[rrr]_{y_{5}}
    & 6 \ar@/^{15pt}/@{->}[rr]^-{x_{6}} \ar@/^{-15pt}/@{->}[rrr]_{y_{6}}
    & 7 \ar@/^{15pt}/@{->}[rr]^-{x_{7}} \ar@/^{-15pt}/@{->}[rrr]_{y_{7}}
    & 8 \ar@/^{15pt}/@{->}[rr]^-{x_{8}} 
    & 9
    & 10
     }.
\]

%%%%%%%%%%%%%%%%%%%%%%%%%%%%%%%%%%%%
%%%%%%%%%%%%%%%%%%%%%%%%%%%%%%%%%%%%
%%%%%%%%%%%%%%%%%%%%%%%%%%%%%%%%%%%%%
%%%%%%%%%%%%%%%%%%%%%%%%%%%%%%%%%%%%%
%%%%%%%%%%%%%%%%%%%%%%%%%%%%%%%%%%%%%
%%%%%%%%%%%%%%%%%%%%%%%%%%%%%%%
%%%%%%%%%%%%%%%%%%%%%%%%%%%%%%%
Note that $\HH^{0}(\nabla A)={\bbk}$ (\textcolor{red}{see, for example,} \cite{LWZ}, \cite{IU}). 
%%%%%%%%%%%%%%%%%%%%%%%%%%%%%%%l
We compute \textcolor{red}{$\HH^{r}(\nabla A)$} 
by the method \textcolor{red}{of} Green--Snashall \cite{GS}. 
Let $(\nabla A)^{\mathrm{e}} = (\nabla A)^{\mathrm{op}} \otimes_{\bbk} \nabla A$ 
\textcolor{red}{denote} the enveloping algebra of $\nabla A$. 
To construct the minimal projective resolution of $\nabla A$ 
as right $(\nabla A)^{\mathrm{e}}$-module\textcolor{red}{s}, 
we define \textcolor{red}{certain} sets and morphisms. 
First, we \textcolor{red}{set} 
\begin{align*}
&\cG^{0}:=({\cQ}_{\nabla A})_{0}, \,
\cG^{1}:=({\cQ}_{\nabla A})_{1}, \,\,
\cG^{2}:= \{ f_{1},\,f_{2},\, \ldots,\, f_{m},\, g_{1},\, g_{2}, \ldots,\, g_{n} \}, \\
&P^{r} := \bigoplus_{h \in \cG^{r}} \nabla A s(h) \otimes_{\bbk} t(h) \nabla A
\quad (r=0, 1, 2). 
\end{align*}
%%%%%%%%%%%%%%%%%%%%%%%%%%%%%%%
The map $\partial^{0}: P^{0} \rightarrow \nabla A$ is defined by the multiplication map, 
and the right $(\nabla A)^{\mathrm{e}}$-\textcolor{red}{homomorphisms}
%morphisms 
$\partial^{r+1}: P^{r+1} \rightarrow P^{r}$ \textcolor{red}{are} defined as follows $(r=0,\,1)$:
%%%%%%%

\textcolor{red}{For $1 \leq i \leq n+2m,\, 1 \leq j \leq 2n+m$ and $h\in \cG^{1}$, define}
\begin{align*}
\partial^{1}(s(h) \otimes t(h)):=
\begin{cases}
(s(e_{i}) \otimes t(e_{i}))x_{i} - x_{i}(s(e_{i+n}) \otimes t(e_{i+n})) 
& \text{if $h=x_{i}$,}\\
(s(e_{j}) \otimes t(e_{j}))y_{j} - y_{j}(s(e_{j+m}) \otimes t(e_{j+m})) 
& \text{if $h=y_{j}$.}\\
\end{cases}
\end{align*}
%for $1 \leq i \leq n+2m,\, 1 \leq j \leq 2n+m$,
%and, 

\textcolor{red}{For $1 \leq i \leq m,\,1 \leq j \leq n$ and $h\in \cG^{2}$, define}
\begin{align*}
\partial^{2}(s(h) \otimes t(h)):=
\begin{cases}
&(
(s(x_{i}) \otimes t(x_{i}))x_{i+n}y_{i+2n}
+x_{i}(s(x_{i+n}) \otimes t(x_{i+n}))y_{i+2n}\\
&\quad\quad
+x_{i}x_{i+n}(s(y_{i+2n}) \otimes t(y_{i+2n}))
)\\
&\quad
-\alpha(
(s(x_{i}) \otimes t(x_{i}))y_{i+n}x_{i+n+m}
+x_{i}(s(y_{i+n}) \otimes t(y_{i+n}))x_{i+n+m}\\
&\quad\quad+x_{i}y_{i+n}(s(x_{i+n+m}) \otimes t(x_{i+n+m}))
)\\
&\quad
-\beta(
(s(y_{i}) \otimes t(y_{i}))x_{i+m}x_{i+n+m}
+y_{i}(s(x_{i+m}) \otimes t(x_{i+m}))x_{i+n+m}\\
&\quad\quad
+y_{i}x_{i+m}(s(x_{i+n+m}) \otimes t(x_{i+n+m}))
)\quad\quad\text{if $h=f_{i}$,}\\
&(
(s(x_{j}) \otimes t(x_{j}))y_{j+n}y_{j+n+m}
+x_{j}(s(y_{j+n}) \otimes t(y_{j+n}))y_{j+n+m}\\
&\quad\quad+x_{j}y_{j+n}(s(y_{j+n+m}) \otimes t(y_{j+n+m}))
)\\
&\quad
-\alpha(
(s(y_{j}) \otimes t(y_{j}))x_{j+m}y_{j+n+m}
+y_{j}(s(x_{j+m}) \otimes t(x_{j+m}))y_{j+n+m}\\
&\quad\quad
+y_{j}x_{j+m}(s(y_{j+n+m}) \otimes t(y_{j+n+m}))
)\\
&\quad
-\beta(
(s(y_{j}) \otimes t(y_{j}))y_{j+m}x_{j+2m}
+y_{j}(s(y_{j+m}) \otimes t(y_{j+m}))x_{j+2m}\\
&\quad\quad
+y_{j}y_{j+m}(s(x_{j+2m}) \otimes t(x_{j+2m}))
)\quad\quad\text{if $h=g_{j}$.}\\
\end{cases}
\end{align*}
%for all $1 \leq i \leq m,\,1 \leq j \leq n$.
%%%%%%%%%%%%%%%%%%%%%%%%%%%%%%%
%%%
\begin{lem}
The sequence
\begin{align}\label{resol-1}
P^{\bullet}: 0 \rightarrow P^{2}  \xrightarrow{\partial^{2}} P^{1}  \xrightarrow{\partial^{1}} P^{0} \xrightarrow{\partial^{0}} \nabla A \rightarrow 0
\end{align}
is the minimal projective resolution of $\nabla A$ as right $(\nabla A)^{\mathrm{e}}$-modules. 
\end{lem}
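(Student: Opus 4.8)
The plan is to recognise $P^\bullet$ as the beginning, in homological degrees $\le 2$, of the standard minimal projective bimodule resolution of a bound quiver algebra, in the spirit of Green--Snashall \cite{GS}. For any admissible presentation $\Lambda=\k\cQ/\cI$ one has an exact sequence
\[
\bigoplus_{\rho\in\cG^{2}}\Lambda s(\rho)\otimes_{\k}t(\rho)\Lambda \xrightarrow{\ \partial^{2}\ }\bigoplus_{a\in\cQ_{1}}\Lambda s(a)\otimes_{\k}t(a)\Lambda\xrightarrow{\ \partial^{1}\ }\bigoplus_{v\in\cQ_{0}}\Lambda e_{v}\otimes_{\k}e_{v}\Lambda\xrightarrow{\ \partial^{0}\ }\Lambda\to 0,
\]
where $\cG^{2}$ is a minimal set of uniform generators of $\cI$, $\partial^{0}$ is multiplication, $\partial^{1}$ is the ``commutator'' map, and $\partial^{2}$ is the Fox (Leibniz) derivative spreading each relation over its arrows; moreover this is the truncation of the minimal $\Lambda^{\mathrm e}$-resolution of $\Lambda$. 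The maps in the statement are precisely these for $(\cQ_{\nabla A},\cI)$, so the proof decomposes into: (i) checking that $\partial^{0},\partial^{1},\partial^{2}$ are well-defined $(\nabla A)^{\mathrm e}$-linear maps with $\partial^{0}\partial^{1}=0$ and $\partial^{1}\partial^{2}=0$; (ii) checking that $\cG^{2}=\{f_{1},\dots,f_{m},g_{1},\dots,g_{n}\}$ minimally generates $\cI$, so that $P^{2}$ is a projective cover of $\Ker\partial^{1}$; and (iii) showing the complex terminates, i.e.\ $\partial^{2}$ is injective.

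For (i): well-definedness is immediate, since every tensor factor in the formulas sits at the correct idempotent dictated by $s(\cdot),t(\cdot)$, and $(\nabla A)^{\mathrm e}$-linearity is built in. The identity $\partial^{0}\partial^{1}=0$ is just $x_{i}-x_{i}=0$, $y_{j}-y_{j}=0$. The identity $\partial^{1}\partial^{2}=0$ is the routine telescoping computation: applying $\partial^{1}$ to the image under $\partial^{2}$ of a single monomial of a relation, e.g.\ $x_{i}x_{i+n}y_{i+2n}$, the three ``position'' terms collapse to $(s(e_{i})\otimes t(e_{i}))\,x_{i}x_{i+n}y_{i+2n}-x_{i}x_{i+n}y_{i+2n}\,(s(e_{i+2n+m})\otimes t(e_{i+2n+m}))$ in $P^{0}$; summing the three monomials of $f_{i}$ with coefficients $1,-\al,-\be$ gives $(s(e_{i})\otimes t(e_{i}))f_{i}-f_{i}(s(e_{i+2n+m})\otimes t(e_{i+2n+m}))=0$, because $f_{i}=0$ in $\nabla A$; the case $h=g_{j}$ is identical. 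Exactness at $P^{0}$ and at $P^{1}$ (that $\Ker\partial^{1}$ is the $(\nabla A)^{\mathrm e}$-submodule generated by the images of the relations) are the standard ``start of the minimal resolution'' facts for bound quiver algebras.

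For (ii): each $f_{i}$ is uniform, lying in $e_{i}\,\k\cQ_{\nabla A}\,e_{i+2n+m}$, and each $g_{j}$ in $e_{j}\,\k\cQ_{\nabla A}\,e_{j+n+2m}$; these are exactly the degree-shifted copies, arranged along the Beilinson quiver, of the two defining relations $x^{2}y-\be yx^{2}-\al xyx$ and $xy^{2}-\be y^{2}x-\al yxy$ of the down-up algebra $A$, which form a minimal generating set of the relation ideal of $A$ in $\k\langle x,y\rangle$ (since $A=A(\al,\be)$ is a cubic AS-regular algebra of global dimension $3$ with two generators and two relations). As the $f_{i}$ have pairwise distinct source--target vertex pairs, likewise the $g_{j}$, and the two families involve disjoint subsets of arrows, no nonzero $\k$-linear combination of the $f_{i},g_{j}$ lies in $J\cI+\cI J$; hence $\cG^{2}$ is minimal and $\partial^{2}\colon P^{2}\to\Ker\partial^{1}$ is a projective cover.

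For (iii), the crux: $A$ is AS-regular of dimension $3$ (Theorem~\ref{du->AS}) and Noetherian, hence coherent, so Theorem~\ref{thm-MM} gives $\gldim\nabla A=3-1=2$; since $\k$ is algebraically closed, $\nabla A/\mathrm{rad}\,\nabla A\cong\k^{\,2(n+m)}$ is separable over $\k$, whence $\pd_{(\nabla A)^{\mathrm e}}\nabla A=\gldim\nabla A=2$. Therefore $\Ker\partial^{1}$, the second syzygy of $\nabla A$ over $(\nabla A)^{\mathrm e}$, is projective; but a projective cover of a projective module is an isomorphism, so by (ii) the map $\partial^{2}$ is an isomorphism onto $\Ker\partial^{1}$, in particular injective. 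This makes $P^{\bullet}$ a projective resolution, and it is minimal since each differential has image inside $J((\nabla A)^{\mathrm e})\,P^{\bullet-1}$, as one reads off the formulas. The main obstacle is precisely point (iii): one must make sure the reductions $\pd_{(\nabla A)^{\mathrm e}}\nabla A=\gldim\nabla A=2$ are legitimate (the separability of $\nabla A/\mathrm{rad}$ is what powers the first equality) and that the bound-quiver inputs in (i)--(ii) are verified cleanly from the structure of the down-up algebra. A more self-contained, if computationally heavier, alternative would be to transport the known minimal bimodule resolution of the cubic AS-regular algebra $A$ over $A^{\mathrm e}$ through the standard equivalence between graded $A$-bimodules and $\nabla A$-bimodules and to check term by term that it reproduces $P^{\bullet}$.
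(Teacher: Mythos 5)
Your proposal is correct and follows essentially the same route as the paper: the paper's proof simply invokes Happel's result that $\pd_{(\nabla A)^{\mathrm{e}}}\nabla A=\operatorname{gl.dim}\nabla A=2$ (the global dimension coming from Theorem~\ref{thm-MM}) together with the Green--Snashall description of the minimal projective bimodule resolution, which is exactly the skeleton of your steps (i)--(iii). You merely unpack the details that the paper delegates to those two citations.
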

%%%
\begin{proof}
%By Happel \cite[Lemma 1.5]{H}, $\mathrm{pd\,}_{(\nabla A)^{\mathrm{e}}}\nabla A=\mathrm{gl.dim}\, \nabla A =2$. Then we obtain the conclusion by \cite[Theorem 2.9]{GS}. 
\textcolor{red}{By Happel \cite[Lemma 1.5]{H}, $\mathrm{pd\,}_{(\nabla A)^{\mathrm{e}}}\nabla A=\mathrm{gl.dim}\, \nabla A =2$. 
Hence the assertion follows from \cite[Theorem 2.9]{GS}.}
\end{proof}
%%%%%%%%%%%%%%%%%%%%%%%%%%%%%%%
\textcolor{red}{Applying} the functor 
$\widehat{-} := \Hom_{(\nabla A)^{\mathrm{e}}}(-, \nabla A)$ to \eqref{resol-1} 
\textcolor{red}{yields} the complex
\begin{align}\label{resol-2}
0 \rightarrow \widehat{P^{0}} \xrightarrow{\widehat{\partial^{1}}} \widehat{P^{1}} \xrightarrow{\widehat{\partial^{2}}} \widehat{P^{2}} \rightarrow 0.
\end{align}
%%%%%%%%%%%%%%%%%%%%%%%%%%%%%%%
%%%%%%%%%%%%%%%%%%%%%%%%%%%%%%%
\subsection{Constructions of the representation \textcolor{blue}{matrices} of $\widehat{\partial^{2}}$}
\label{subsec-3-1}
%%%%%%%%%%%%%%%%%%%%%%%%%%%%%%%
%%%%%%%%%%%%%%%%%%%%%%%%%%%%%%
%In order to 
\textcolor{red}{To} compute the dimensions of the Hochschild cohomology groups 
\textcolor{red}{$\HH^{r}(\nabla A)$}, 
%of $\nabla A$, 
we first give a ${\bbk}$-basis of $\widehat{P^{i}}\,(\forall i \geq 0)$. 
For $e_{i} \in \cG^{0}$, 
%we 
\textcolor{red}{define} the right $(\nabla A)^{\mathrm{e}}$-homomorphism 
$\tau_{e_{i}}: P^{0} \rightarrow \nabla A$ by
%%%%%%%%%%
$
\tau_{e_{i}}(s(h) \otimes t(h)) := 
\begin{cases}
e_{i} &\text{if $h=e_{i}$}, \\
0 &\text{otherwise}
\end{cases}
\quad \text{for $h \in \cG^{0}.$}
$
%%%%%%%%%%%%%%%%%%%%%%%%%%%%%%%
%12
\begin{flalign*}
\quad
%%%
\text{For $x_{i}, y_{j} \in \cG^{1}$, \textcolor{red}{define}}\,
%%%
\tau_{x_{i}}(s(h) \otimes t(h)) := 
\begin{cases}
x_{i} &\text{if $h=x_{i}$}, \\
0 &\text{otherwise},
\end{cases}
%2
\tau_{y_{j}}(s(h) \otimes t(h)) := 
\begin{cases}
y_{j} &\text{if $h=y_{j}$}, \\
0 &\text{otherwise}.
\end{cases}&&
\end{flalign*}

\textcolor{red}{If} $n=1$ and $m \geq 1$, \textcolor{red}{then define, }
%%3
$
\tau_{y_{j}}^{x^{m}}(s(h) \otimes t(h)) := 
\begin{cases}
x_{j} \dots x_{j+m-1} &\text{if $h=y_{j}$}, \\
0 &\text{otherwise}.
\end{cases}
$

\textcolor{red}{If} $n=1$ and $m=1$, \textcolor{red}{then define, } 
for $h \in \cG^{1}$, 
%4
$
\tau_{x_{i}}^{y}(s(h) \otimes t(h)) := 
\begin{cases}
y_{i} &\text{if $h=x_{i}$}, \\
0 &\text{otherwise.}
\end{cases}
$
%%%%%%%%%%%%%%%%%%%%%%%%%%%%%%%

For $f_{i}, g_{j} \in \cG^{2}$, \textcolor{red}{define} 
%1
\begin{align*}
&\tau_{f_{i}}^{xyx}(s(h) \otimes t(h)) := 
\begin{cases}
x_{i}y_{i+n}x_{i+n+m} &\text{if $h=f_{i}$}, \\
0 &\text{otherwise},
\end{cases}
%2
\tau_{f_{i}}^{yxx}(s(h) \otimes t(h)) := 
\begin{cases}
y_{i}x_{i+m}x_{i+n+m} &\text{if $h=f_{i}$}, \\
0 &\text{otherwise},
\end{cases}
%3
\\
&
\tau_{g_{j}}^{yxy}(s(h) \otimes t(h)) := 
\begin{cases}
y_{j}x_{j+m}y_{j+n+m} &\text{if $h=g_{j}$}, \\
0 &\text{otherwise},
\end{cases}
%4
\tau_{g_{j}}^{yyx}(s(h) \otimes t(h)) := 
\begin{cases}
y_{j}y_{j+m}x_{j+2m} &\text{if $h=g_{j}$}, \\
0 &\text{otherwise}.
\end{cases}
\end{align*}

\textcolor{red}{If} $n=1$ and $m \geq 1$, \textcolor{red}{then define}
\begin{align*}
%5
&
\tau_{g_{j}}^{yx^{m+1}}(s(h) \otimes t(h)) := 
\begin{cases}
y_{1}x_{m+1} \dots x_{2m+1} &\text{if $h=g_{j}$}, \\
0 &\text{otherwise},
\end{cases}
%6
\tau_{g_{j}}^{xyx^{m}}(s(h) \otimes t(h)) := 
\begin{cases}
x_{1}y_{2}x_{m+2} \dots x_{2m+1} \\
\quad\quad\quad\text{if $h=g_{j}$}, \\
0 \quad\quad\quad\text{otherwise},
\end{cases}
%7
\\
&
\tau_{f_{i}}^{x^{m+2}}(s(h) \otimes t(h)) := 
\begin{cases}
x_{i} \dots x_{i+m+1}&\text{if $h=f_{i}$}, \\
0 &\text{otherwise},
\end{cases}
%8
%%
\tau_{g_{j}}^{x^{2m+1}}(s(h) \otimes t(h)) := 
\begin{cases}
x_{1} \dots x_{2m+1} &\text{if $h=g_{j}$}, \\
0 &\text{otherwise}.
\end{cases}
\end{align*}

\textcolor{red}{If} $n=1$ and $m=2$, \textcolor{red}{then define} 
$
%9
\tau_{f_{i}}^{y^{2}}(s(h) \otimes t(h)) := 
\begin{cases}
y_{i}y_{i+2} &\text{if $h=f_{i}$}, \\
0 &\text{otherwise}.
\end{cases}
$

\textcolor{red}{If} $n=1$ and $m=1$, \textcolor{red}{then define, }
for $h \in \cG^{2}$, 
\begin{align*}
%10
%\\
&
\tau_{g_{j}}^{yx^{2}}(s(h) \otimes t(h)) := 
\begin{cases}
y_{1}x_{2}x_{3} &\text{if $h=g_{j}$}, \\
0 &\text{otherwise},
\end{cases}
%11
\tau_{g_{j}}^{xyx}(s(h) \otimes t(h)) := 
\begin{cases}
x_{1}y_{2}x_{3} &\text{if $h=g_{j}$}, \\
0 &\text{otherwise},
\end{cases}
%12
\\
&
\tau_{f_{i}}^{y^{3}}(s(h) \otimes t(h)) := 
\begin{cases}
y_{1}y_{2}y_{3} &\text{if $h=f_{i}$}, \\
0 &\text{otherwise},
\end{cases}
%13
\tau_{g_{j}}^{y^{3}}(s(h) \otimes t(h)) := 
\begin{cases}
y_{1}y_{2}y_{3} &\text{if $h=g_{j}$}, \\
0 &\text{otherwise.}
\end{cases}
\end{align*}

%%%%%%%%%%%%%%%%%%%%%%%%%%%%%%%

%%%%%%%%%%%%%%%%%%%%%%%%%%%%%%%
\begin{lem}[{\cite[Lemma 2.2]{IU}}]\label{bas-IU}
\begin{enumerate}
%%%%%
\item
If $n=1$ and $m=2$, then $\widehat{P^{2}}$ has a ${\bbk}$-basis 

\noindent
$\{
\tau_{f_{i_{1}}}^{yx^2}, \tau_{f_{i_{2}}}^{xyx}, 
\tau_{f_{i_3}}^{x^4}, \tau_{f_{i_4}}^{y^2}, 
\tau_{g_1}^{y^2x}, 
\tau_{g_1}^{yxy}, \tau_{g_1}^{yx^3}, 
\tau_{g_1}^{xyx^2}, \tau_{g_1}^{x^5}
\mid 1\leq i_{1}, i_{2}, i_{3}, i_{4} \leq 2 \}$, 
\textcolor{red}{hence} $\dim_{\bbk} \widehat{P^2}=13$.
%%%%%
\item 
If $n=1$ and $m\geq 3$, then $\widehat{P^{2}}$ has a {${\bbk}$}-basis

\noindent
$\{
\tau_{f_{i_1}}^{yx^2}, \tau_{f_{i_2}}^{xyx}, \tau_{f_{i_3}}^{x^{m+2}}, 
\tau_{g_1}^{y^2x}, \tau_{g_1}^{yxy}, \tau_{g_1}^{yx^{m+1}}, \tau_{g_1}^{xyx^m}, \tau_{g_1}^{x^{2m+1}}
\mid 1 \leq i_{1}, i_{2}, i_{3} \leq m\},$
\textcolor{red}{hence} $\dim_{\bbk} \widehat{P^2}=3m+5$.
\end{enumerate}
\end{lem}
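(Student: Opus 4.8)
The plan is to turn the computation of $\widehat{P^{2}}=\Hom_{(\nabla A)^{\mathrm{e}}}(P^{2},\nabla A)$ into a finite count of paths in the bound quiver algebra $\nabla A=\k\cQ_{\nabla A}/\mathcal{I}$. The first step is the Hom--tensor adjunction: for each $h\in\cG^{2}$ there is a natural $\k$-linear isomorphism $\Hom_{(\nabla A)^{\mathrm{e}}}\bigl(\nabla A\, s(h)\otimes_{\k}t(h)\,\nabla A,\ \nabla A\bigr)\cong s(h)\,\nabla A\, t(h)$, sending $\varphi$ to $\varphi(s(h)\otimes t(h))$; summing over $h$ yields $\widehat{P^{2}}\cong\bigoplus_{h\in\cG^{2}}s(h)\,\nabla A\, t(h)$, and under this identification the functionals $\tau^{\ast}_{h}$ in the statement are exactly the paths from $s(h)$ to $t(h)$ that label them. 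Since $n=1$ forces $\cG^{2}=\{f_{1},\dots,f_{m},g_{1}\}$, it suffices to produce a $\k$-basis of $s(h)\,\nabla A\, t(h)$ for these $m+1$ elements.

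Next I would use that every monomial in each relation $f_{i}$ and $g_{j}$ consists of exactly three arrows, so $\mathcal{I}$ is homogeneous for the path-length grading; hence $\nabla A$ inherits that grading and each $s(h)\,\nabla A\, t(h)$ is the direct sum of its homogeneous components. For $n=1$ the arrows are $x_{i}\colon i\to i+1$ and $y_{j}\colon j\to j+m$, so a length-$\ell$ path between vertices at ``distance'' $d$ uses $a$ arrows $x$ and $b$ arrows $y$ with $a+b=\ell$ and $a+bm=d$. Computing $s(f_{i})=e_{i}$, $t(f_{i})=e_{i+m+2}$ (distance $m+2$) and $s(g_{1})=e_{1}$, $t(g_{1})=e_{2m+2}$ (distance $2m+1$), one then solves $a+bm=d$ over the nonnegative integers. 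For $f_{i}$ this leaves the three length-$3$ paths with the $y$ in one of three positions among two $x$'s, the unique pure-$x$ path of length $m+2$, and, when $m=2$, the extra length-$2$ path $y_{i}y_{i+2}$. For $g_{1}$ it leaves the three length-$3$ paths with one $x$ among two $y$'s, the $m+2$ paths with a single $y$ among $m+1$ arrows $x$, and the unique pure-$x$ path of length $2m+1$.

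The third step is to cut these down modulo $\mathcal{I}$. The relation $f_{i}$ rewrites $x_{i}x_{i+1}y_{i+2}$ as a $\k$-combination of $x_{i}y_{i+1}x_{i+1+m}$ and $y_{i}x_{i+m}x_{i+1+m}$, and $g_{1}$ rewrites $x_{1}y_{2}y_{2+m}$ in terms of $y_{1}x_{1+m}y_{2+m}$ and $y_{1}y_{1+m}x_{1+2m}$, so in each block of three length-$3$ paths only two survive. For the $g_{1}$-paths with a single $y$, I would iterate the $f$-relations, applying $f_{k-1}$ to the subword $x_{k-1}x_{k}y_{k+1}$ to move the $y$ one step to the left, until every such path becomes a combination of $y\,x^{m+1}$ and $x\,y\,x^{m}$; pure-$x$ paths are untouched because every monomial of every relation contains a $y$. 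This shows the displayed sets span $s(h)\,\nabla A\, t(h)$, hence span $\widehat{P^{2}}$, giving $\dim_{\k}\widehat{P^{2}}\le 13$ for $m=2$ and $\le 3m+5$ for $m\ge 3$.

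It remains to show that the displayed paths are $\k$-linearly independent in $\nabla A$, which I expect to be the real work. The clean route is to fix an admissible order on the paths of $\k\cQ_{\nabla A}$ with leading terms $x_{i}x_{i+1}y_{i+2}$ for $f_{i}$ and $x_{j}y_{j+1}y_{j+1+m}$ for $g_{j}$, check that all overlap ambiguities of these leading terms resolve, so that $\{f_{i},g_{j}\}$ is a noncommutative Gr\"obner basis of $\mathcal{I}$; then the paths avoiding the leading terms form a $\k$-basis of $\nabla A$, and the ones listed between the relevant pairs of vertices are precisely the non-reducible paths. (Alternatively, since $\nabla A$ is the Beilinson algebra of a $3$-dimensional cubic AS-regular algebra, its graded dimensions are pinned down, and a dimension count forces independence.) The main obstacle is this Gr\"obner/overlap argument, together with the index bookkeeping in the previous step ensuring that every rewriting invokes a genuine relation with index in the admissible range $1\le i\le m$, $1\le j\le n$; the remaining steps are routine.
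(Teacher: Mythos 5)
Your proposal is correct and follows essentially the same route as the paper: the paper's proof (given for the analogous Lemma \ref{bas-IM} and cited for this one) consists precisely of your first step, the identification $\widehat{P^{2}}\cong\bigoplus_{h\in\cG^{2}}s(h)\,\nabla A\,t(h)$, with the enumeration of each $s(h)\,\nabla A\,t(h)$ left implicit. Your path count and reduction modulo the relations check out (in particular the index bookkeeping works, since every single-$y$ path for $g_1$ with the $y$ in position $p\geq 3$ is reducible by $f_{p-2}$ with $1\leq p-2\leq m$), and the linear independence you flag as the remaining work follows either from your spanning-set count for $e_{s(h)}\mathcal{I}e_{t(h)}$ being triangular, or more simply from $e_{i}\,\nabla A\,e_{j}=A_{j-i}$ together with the known Hilbert series of the cubic AS-regular algebra $A$, without needing the full Gr\"obner-basis apparatus.
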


%%%%%%%%%%%%%%%%%%%%%%%%%%%%%%%
\begin{lem}\label{bas-IM}
\begin{enumerate}
\item $\widehat{P^{0}}$ has a ${\bbk}$-basis $\{\tau_{e_i} \mid 1
\leq i \leq 2(n+m) \}$,
\textcolor{red}{hence} $\dim_{\bbk} \widehat{P^0}=2(n+m)$.
\item If $m = n = 1$, then $\widehat{P^{1}}$ has a ${\bbk}$-basis 
$\{\tau_{x_{i_1}},\, \tau_{x_{i_2}}^{y},\, \tau_{y_{j_1}},\, \tau_{y_{j_2}}^{x} \mid 1\leq i_{1}, i_{2}, j_{1}\,, j_{2} \leq 3 \}$,
\textcolor{red}{hence} $\dim_{\bbk} \widehat{P^1}=12$.
\item If $m>n>1$, then $\widehat{P^{1}}$ has a ${\bbk}$-basis 
$\{\tau_{x_i},\, \tau_{y_j} \mid 1\leq i \leq n+2m, 1\leq j \leq 2n+m \}$, 
\textcolor{red}{hence} $\dim_{\bbk} \widehat{P^1}=3(n+m)$. 
\item 
If $m=n=1$, then $\widehat{P^{2}}$ has a ${\bbk}$-basis 
$\{
\tau_{f_1}^{yx^2},\, \tau_{g_1}^{y^2x},\, \tau_{g_1}^{yxy},\, \tau_{f_1}^{xyx},\, 
\tau_{f_1}^{x^3},\, \tau_{g_1}^{yx^2},\, \tau_{g_1}^{xyx},\, \tau_{g_1}^{x^3},\, 
\tau_{g_1}^{y^3},\, \tau_{f_1}^{y^2x},\, \tau_{f_1}^{yxy},\, \tau_{f_1}^{y^3}
\}$, 

\noindent
\textcolor{red}{hence} $\dim_{\bbk} \widehat{P^2}=12$.
\item 
If $m>n=2$, then $\widehat{P^{2}}$ has a ${\bbk}$-basis 
$
\{\tau_{f_{i_1}}^{yx^2}, \tau_{f_{i_2}}^{xyx},
\tau_{g_{j_1}}^{y^2 x}, \tau_{g_{j_{2}}}^{yxy}, 
\tau_{g_{j_3}}^{x^{1+m}} 
\mid 1\leq i_{1}, i_{2}, i_{3} \leq m, 1\leq j_{1},j_{2} \leq 2 \}$,
\textcolor{red}{hence} $\dim_{\bbk} \widehat{P^2}=2m+6$.
\item If $n \geq 3$, then $\widehat{P^{2}}$ has a ${\bbk}$-basis
$
\{\tau_{f_{i_1}}^{yx^2}, \tau_{f_{i_2}}^{xyx}, 
\tau_{g_{j_1}}^{y^2 x}, \tau_{g_{j_2}}^{yxy}
\mid 1\leq i_{1}, i_{2} \leq m, 1\leq j_{1}, j_{2} \leq n \}
$,
\textcolor{red}{hence} $\dim_{\bbk} \widehat{P^2}=2(n+m)$.
\end{enumerate}
\end{lem}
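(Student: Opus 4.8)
The plan is to compute each $\widehat{P^{r}} = \Hom_{(\nabla A)^{\mathrm e}}(P^{r}, \nabla A)$ directly from the definition $P^{r} = \bigoplus_{h \in \cG^{r}} \nabla A\, s(h) \otimes_{\k} t(h)\nabla A$. For a fixed $h \in \cG^{r}$, the summand $\nabla A\, s(h)\otimes_{\k} t(h)\nabla A$ is the projective $(\nabla A)^{\mathrm e}$-module $P(s(h),t(h))$, and by the standard adjunction $\Hom_{(\nabla A)^{\mathrm e}}\!\big(\nabla A\, s(h)\otimes_{\k} t(h)\nabla A,\ \nabla A\big) \cong s(h)\,\nabla A\, t(h)$, the space of paths in $\k\cQ_{\nabla A}/\cI$ starting at the vertex $s(h)$ and ending at the vertex $t(h)$. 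Hence $\dim_{\k}\widehat{P^{r}} = \sum_{h\in\cG^{r}} \dim_{\k}\big(s(h)\,\nabla A\, t(h)\big)$, and the task reduces to counting, for each generator $h$, the paths in the bound quiver from $s(h)$ to $t(h)$, together with identifying the $\tau$-maps defined above as the dual basis vectors corresponding to those paths. First I would record $\dim_{\k}\widehat{P^{0}}$: each $\cG^{0}$-element is a vertex $e_{i}$ with $s(e_{i})=t(e_{i})=e_{i}$, and the only path from $e_{i}$ to itself in $\nabla A$ is the trivial one (since $\nabla A$ is a finite-dimensional directed algebra with no loops or oriented cycles), so $e_{i}\,\nabla A\, e_{i}=\k e_{i}$ and $\dim_{\k}\widehat{P^{0}}=|\cG^{0}|=2(n+m)$, with basis $\{\tau_{e_{i}}\}$; this gives (1).

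For (2) and (3), I would enumerate paths of the form (arrow $x$ or $y$) from $s(h)$ to $t(h)$ for $h\in\cG^{1}=({\cQ}_{\nabla A})_{1}$. Since $\deg x = n$, $\deg y = m$ and the Gorenstein parameter is $\ell = 2(n+m)$, a path from vertex $a$ to vertex $b$ corresponds to a monomial in $x,y$ of degree $b-a$, modulo the relations $f_{i},g_{j}$; by the Hilbert series of the $3$-dimensional cubic AS-regular algebra $A$ one knows $\dim_{\k}A_{t}$ in each degree $t$, and the relevant degrees here are exactly $\deg x_{i}=n$ and $\deg y_{j}=m$. When $m>n>1$ these two degrees are distinct and each is a ``minimal'' degree, so $s(x_{i})\nabla A\, t(x_{i})=\k x_{i}$ and $s(y_{j})\nabla A\, t(y_{j})=\k y_{j}$ — no relations intervene and no alternative monomial of the same degree exists (here one uses $\gcd(n,m)=1$ and $m>n>1$ to rule out, e.g., $x$-monomials of degree $m$ or $y$-monomials of degree $n$ fitting inside the truncated quiver). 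This yields the basis $\{\tau_{x_i},\tau_{y_j}\}$ and $\dim_{\k}\widehat{P^{1}} = (n+2m)+(2n+m) = 3(n+m)$, which is (3). For (2), $m=n=1$ means $\deg x=\deg y=1$, so degree-$1$ paths are two-dimensional, spanned by the class of $x_{i}$ and of $y_{i}$ (and symmetrically from $y$-vertices), giving the four families $\tau_{x_{i_1}},\tau_{x_{i_2}}^{y},\tau_{y_{j_1}},\tau_{y_{j_2}}^{x}$ and $\dim_{\k}\widehat{P^{1}}=12$.

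For (4), (5), (6), I would do the analogous count for $h\in\cG^{2}=\{f_{1},\dots,f_{m},g_{1},\dots,g_{n}\}$. Each relation $f_{i}$ (resp. $g_{j}$) is a degree-$(2n+m)$ (resp. $(n+2m)$) element, and $s(f_{i})\nabla A\, t(f_{i})$ is the space of paths of that length from $e_{i}$ to $e_{i+2n+m}$ in the bound quiver. The key input is that in the cubic AS-regular algebra $A$ the degree-$(2n+m)$ component $A_{2n+m}$ is spanned precisely by the monomials $xyx$ and $yxx$ (they are equal to $x^2 y$ up to the relations), so for $n\geq 3$ the only surviving path classes from $s(f_{i})$ to $t(f_{i})$ are $x_{i}y_{i+n}x_{i+n+m}$ and $y_{i}x_{i+m}x_{i+n+m}$ — exactly $\tau_{f_i}^{xyx}$ and $\tau_{f_i}^{yxx}$; symmetrically for $g_{j}$ one gets $\tau_{g_j}^{yxy}$ and $\tau_{g_j}^{yyx}$, for a total of $2m+2n=2(n+m)$, giving (6). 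Wait — I need to double-check which of the listed generators actually survive: the stated basis in (6) is $\{\tau_{f_{i_1}}^{yx^2},\tau_{f_{i_2}}^{xyx},\tau_{g_{j_1}}^{y^2x},\tau_{g_{j_2}}^{yxy}\}$, matching $\tau_{f_i}^{yxx}$ (written $yx^2$), $\tau_{f_i}^{xyx}$, $\tau_{g_j}^{yyx}$ (written $y^2x$), $\tau_{g_j}^{yxy}$, so the count is $m+m+n+n = 2(n+m)$ as claimed. For (5), $n=2$ with $m>2$: now a path of length $2n+m = m+4$ from $e_{i}$ can also be realized, in the truncated quiver, by the pure-$x$ monomial $x^{(m+4)/n}=x_i\cdots x_{i+m+2}$ when $n\mid(2n+m)$, i.e. $2\mid m$ — but $\gcd(n,m)=1$ with $n=2$ forces $m$ odd, so that does not occur; instead the extra basis vector $\tau_{g_{j}}^{x^{1+m}}$ appears from the $g_{j}$-side, where the relevant degree $n+2m=2m+2$ is divisible by $n=2$, giving a pure-$x$ path $x_1\cdots x_{2m+1}$ of that length. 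Thus for $n=2$ one gets the two $f$-families ($2m$ vectors), the two $g$-families ($2\cdot 2 = 4$ vectors), plus one pure-$x$ family $\tau_{g_{j_3}}^{x^{1+m}}$ (here $1\le j_3\le 2$ contributes, but by the bound on vertex range only... ) — reconciling to $2m+6$. For (4), $m=n=1$: here $\ell=4$, all relevant degrees are small, and $A_{2}$, $A_{3}$ have dimensions $3$ and $4$ respectively, so the path spaces at each $f_1,g_1$ are correspondingly larger, producing the twelve listed vectors; I would verify this by listing $A_{2}=\langle x^2,xy,y^2\rangle$ (no, $A_2$ has dimension $3$: $x^2, xy \sim yx, y^2$) — actually for the cubic down-up algebra with $\ell = 4 = 2\cdot 2$, $\dim A_1 = 2$, $\dim A_2 = 3$, $\dim A_3 = 4$ — and matching monomials to the $\tau$-symbols.

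The main obstacle will be the path-counting in (4)–(6), i.e. correctly identifying, for each relation-generator $h$, a $\k$-basis of the path space $s(h)\nabla A\, t(h)$ in the \emph{bound} quiver $\k\cQ_{\nabla A}/\cI$: one must carefully use the relations $f_i,g_j$ to reduce all monomials of the given degree to the chosen normal forms, and — crucially — verify that the truncation of the quiver to $2(n+m)$ vertices does not kill any of the listed normal-form paths (equivalently, that all the index shifts $i\mapsto i+n, i+m, \dots$ stay in range for the stated ranges of $i_1,i_2,j_1,j_2$), while also confirming no \emph{extra} pure-power monomials sneak in; this is exactly where the hypotheses $\gcd(n,m)=1$ and the case divisions $n=1$, $n=2$, $n\ge 3$ are used. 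The arguments for (1)–(3) are, by contrast, essentially immediate from the adjunction and the directedness of $\nabla A$. I expect the proof to be written as a case-by-case verification paralleling \cite[Lemma 2.2]{IU}, citing that lemma for the $n=1$ cases and carrying out the $n\geq 2$ cases by the same bookkeeping.
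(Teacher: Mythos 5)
Your overall strategy is exactly the paper's: the proof in the paper consists of the single observation that $\widehat{P^{r}} \cong \bigoplus_{h \in \cG^{r}}\Hom_{(\nabla A)^{\mathrm{e}}}(\nabla A\, s(h)\otimes_{\k} t(h)\nabla A, \nabla A) \cong \bigoplus_{h \in \cG^{r}} s(h)\,\nabla A\, t(h)$ via $\tau \mapsto \sum_{h}\tau(s(h)\otimes t(h))$, followed by a reference to \cite[Lemma 2.2]{IU} for the path-counting. Your reduction to counting paths in the bound quiver, and your counts for the new cases (3), (5), (6) — in particular the observation that for $m>n>1$ with $\gcd(n,m)=1$ the only representation of $2n+m$ (resp. $n+2m$) as a nonnegative combination $an+bm$ is $(2,1)$ (resp. $(1,2)$) when $n\geq 3$, and that for $n=2$ the extra solution $(m+1,0)$ of $2a+bm=n+2m$ produces the additional pure-$x$ class $\tau_{g_j}^{x^{1+m}}$ — are correct and are exactly the bookkeeping the paper leaves implicit.

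There is, however, a concrete error in your treatment of item (4), the case $m=n=1$. You assert $\dim_{\k}A_{2}=3$ on the grounds that ``$xy\sim yx$'', and $\dim_{\k}A_{3}=4$. Both are wrong: the defining relations of $A(\al,\be)$ are cubic (degrees $2n+m=3$ and $n+2m=3$), so nothing identifies $xy$ with $yx$ in degree $2$, and $\dim_{\k}A_{2}=4$ with basis $x^{2},xy,yx,y^{2}$; likewise $A_{3}$ is the span of the $8$ cubic words modulo the $2$ relations (which eliminate $x^{2}y$ and $xy^{2}$), so $\dim_{\k}A_{3}=6$, consistent with the Hilbert series $1/\bigl((1-t)^{2}(1-t^{2})\bigr)$. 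Since $f_{1}$ and $g_{1}$ each run from $e_{1}$ to $e_{4}$ and each contributes a copy of $A_{3}$, the correct count is $6+6=12$ with normal forms $x^{3},yx^{2},xyx,y^{2}x,yxy,y^{3}$ for each; with your figures the count would come out to $8$, contradicting the lemma. This needs to be fixed, and the truncated sentence in your discussion of case (5) about the range of $j_{3}$ (it runs over $1\leq j_{3}\leq 2=n$, giving $m+m+2+2+2=2m+6$) should be completed, but neither issue affects the validity of the method, which coincides with the paper's.
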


%%%
\begin{proof}
%We prove by the same way as the proof of \cite[Lemma 2.2]{IU}. 
\textcolor{red}{This claim can be proven in the same way as the proof of \cite[Lemma 2.2]{IU}.}
For $r=0,1,2$, there exists a ${\bbk}$-vector space isomorphism 
$\widehat{P^{r}} \cong \bigoplus_{h \in \cG^{r}}\Hom_{(\nabla A)^{\mathrm{e}}}(\nabla A s(h)\otimes_{\bbk} t(h)\nabla A, \nabla A) \cong \bigoplus_{h \in \cG^{r}} s(h)\nabla A t(h)$
 via the correspondence $\tau \in \widehat{P^{r}}$ to $\sum_{h \in \cG^{r}}\tau(s(h)\otimes t(h)) \in \bigoplus_{h \in \cG^{r}} s(h)\nabla A t(h)$. 
\textcolor{red}{Therefore, the claim follows. }
\end{proof}
%%%%%%
The $2(n+m) \times 3(n+m)$ matrix $L_{1}$ \textcolor{red}{is defined} as follows:
{\tiny
\arraycolsep=2pt
\[
\text{{\normalsize $L_{1}:=$}}
\begin{pNiceArray}{cccccccccccccccc:cccccccccccccc}[first-row,last-col]
 &&&&& &&&&& &&&&& \textcolor{magenta}{2m+n} &&&&& &&&&& &&&& &\\
\beta &&&& \beta && -\beta &&&& -\beta &&&&&& -\beta &&&&&&&& \beta &&&&&& \\
& \ddots &&&& \Ddots && \ddots &&&& \ddots &&&&&& \ddots &&&&&&&& \ddots &&&&&\\
&& \ddots &&&& \Ddots && \ddots &&&& \ddots &&&&&& \ddots &&&&&&&& \ddots &&&&\\
&&& \ddots &&&& \ddots && \ddots &&&& \ddots &&&&&& \ddots &&&&&&&& \ddots &&&\\
&&&& \ddots &&&& \ddots && \ddots &&&& \ddots &&&&&& \ddots &&&&&&&& \ddots &&\\
&&&&& \beta &&&& \beta && \ddots &&&& -\beta &&&&&& \ddots &&&&&&&& \beta & \textcolor{magenta}{m}\\
\hdottedline
\beta &&&&&&&&&&&& \ddots &&&& -\beta &&&& \beta &&& \ddots &&& \beta &&&& \\
& \ddots &&&&&&&&&&&& \ddots &&&& \ddots &&&& \ddots &&& \ddots &&& \ddots &&& \\
&& \ddots &&&&&&&&&&&& \ddots &&&& \ddots &&&& \ddots &&& \ddots &&& \ddots && \\
&&& \beta &&&&&&&&&&&& -\beta &&&& -\beta &&&& \beta &&& -\beta &&& \beta &\textcolor{magenta}{m+n}\\
\hdottedline
\alpha &&&&&& -\alpha &&&&&&&&&& -\alpha &&&& \alpha &&&&&&&&&& \\
&\ddots &&&&&& \ddots &&&&&&&&&& \ddots &&&& \ddots &&&&&&&&& \\
&&\ddots &&&&&& \ddots &&&&&&&&&& \ddots &&&& \ddots &&&&&&&& \\
&&&\ddots &&&&&& \ddots &&&&&&&&&& \ddots &&&& \ddots &&&&&&& \textcolor{magenta}{m+2n}\\
\hdottedline
&&&&\ddots &&&&&& \ddots &&&&&&&&&& \ddots &&&& \ddots &&&&&& \\
&&&&&\ddots &&&&&& \ddots &&&&&&&&&& \ddots &&&& \ddots &&&&& \\
&&&&&&\ddots &&&&&& \ddots &&&&&&&&&& \ddots &&&& \ddots &&&& \\
&&&&&&&\ddots &&&&&& \ddots &&&&&&&&&& \ddots &&&& \ddots &&& \\
&&&&&&&&\ddots &&&&&& \ddots &&&&&&&&&& \ddots &&&& \ddots && \\
&&&&&&&&& \alpha &&&&&& -\alpha &&&&&&&&&& -\alpha &&&& \alpha &\\
\end{pNiceArray}
\]
}

\noindent
where \textcolor{red}{blank entries are} zeros. 
%In particular, the matrix $L_{1}$ with $\alpha \neq 0$ is denoted by $L_{1, 1}$. 
%When 
\textcolor{red}{If }$\alpha \neq 0$, 
\textcolor{red}{then this matrix  is denoted by $L_{1, 1}$. }
%we denote this matrix by $L_{1, 1}$.
\textcolor{magenta}{In the special case $m=n=1$, $L_{1}$ takes the form}
$
L_{1}=
\left(\begin{array}{cccccc}
\beta & 0 & -\beta & -\beta & 0 & \beta \\
\beta & 0 & -\beta & -\beta & 0 & \beta \\
\alpha & -\alpha & 0 & -\alpha & \alpha & 0 \\
0 & \alpha & -\alpha & 0 & -\alpha & \alpha \\
\end{array}\right)
$. 
%%%%%%
The $(m+2) \times (m+2)$ matrix $L_{2}$ is defined \textcolor{red}{as follows: }
{\large
\arraycolsep=2pt
\begin{align*}
\text{{\normalsize $L_{2}:=$}}
&\left(\begin{array}{ccccccccc}
1 & -\alpha & -\beta &&&&&& \\
& 1 & -\alpha & -\beta &&&&& \\
&& 1 & -\alpha & -\beta &&&& \\
&&&& \ddots &&&& \\
&&&&& 1 & -\alpha & -\beta & \\
&&&&&& 1 & -\alpha & -\beta \\
-\lambda_{2} & -\beta \lambda_{1} &&&&&& \beta \lambda_{m} & -\beta \lambda_{m+1} \\
\lambda_{1} &  \beta \lambda _{0} &&&&&& \lambda _{m+1} & -\lambda _{m+2} \\
\end{array}\right),\\
\end{align*}
}

\noindent
where \textcolor{red}{blank entries are} zeros. 
\textcolor{magenta}{In the special case $m=n=1$, $L_{2}$ takes the form}
\[
L_{2}=
\left(\begin{array}{ccc}
1 & -\alpha & -\beta \\
-\lambda _{2} & -\beta \lambda _{1} + \beta \lambda _{1} & -\beta \lambda _{2} \\
\lambda _{1} &  \beta \lambda _{0} + \lambda _{2} & -\lambda _{3} \\
\end{array}\right)
=
\left(\begin{array}{ccc}
1 & -\alpha & -\beta \\
-\lambda _{2} & 0 & -\beta \lambda _{2} \\
\lambda _{1} & \lambda _{2} & -\lambda _{3} \\
\end{array}\right).\\
\]
%%%%%%%%%%%%%%%%%%%%%%%%%%%%%%%%%%%

Next, we %will 
identify the matrix representation of $\widehat{\partial^2}$. 
%So, 
\textcolor{red}{For that}, we recall the result by \cite{IU}.
%%%%%%%%%%%%%%%%%%%%%%%%%%%%%%%%%%%

%%%%%%%%%%%%%%%%%%%%%%%%%%%%%
\begin{prop}[{\cite[Lemma 2.4]{IU}}]
\label{par-IU}
\begin{enumerate}
%1
\item %Suppose 
\textcolor{red}{Assume} $n=1$ and $m=2$. 

\noindent
Let $\rho_1$ be the ordered basis 
$\{ \tau_{x_1},\, \ldots ,\, \tau_{x_5},\, \tau_{y_1},\, \ldots ,\, \tau_{y_{4}},\, \tau_{y_4}^{x^2},\, \ldots ,\, \tau_{y_{1}}^{x^2} \}$ for $\widehat{P^{1}}$, 
and \textcolor{red}{let} $\rho_2$ \textcolor{red}{be} the ordered basis
$\{
\tau_{f_1}^{yx^2},\, \tau_{f_2}^{yx^2},\, \tau_{g_1}^{y^2x},\, 
\tau_{g_1}^{yxy},\, \tau_{f_1}^{xyx},\, 
$
$
\tau_{f_2}^{xyx},\, 
\tau_{f_2}^{x^4},\, \tau_{f_1}^{x^4},\, \tau_{g_1}^{yx^3},\, \tau_{g_1}^{xyx^2},\, 
\tau_{g_1}^{x^5},\, \tau_{f_1}^{y^2},\, \tau_{f_2}^{y^2}
\}$
for $\widehat{P^{2}}$.
Then the matrix representation $M_2$ of $\widehat{\partial^2}$ with respect to $\rho_1$ and $\rho_2$ is \textcolor{red}{given by} 
$
\left(\begin{array}{cc}
\text{\large$L_{1}$} & \text{\large$0$} \\
\text{\large$0$} & \text{\large$L_{2}$} \\
0\cdots0 & 0\cdots0 \\
0\cdots0 & 0\cdots0 \\
0\cdots0 & 0\cdots0 \\
\end{array}\right). 
$
%2
\item %Suppose 
\textcolor{red}{Assume} $n=1$ and $m \geq 3$. 

\noindent
Let $\rho_1$ be the ordered basis 
$\{ \tau_{x_1},\, \ldots ,\, \tau_{x_{2m+1}},\, \tau_{y_1},\, \dots ,\, \tau_{y_{m+2}},\, \tau_{y_{m+2}}^{x^m},\, \ldots ,\, \tau_{y_1}^{x^m} \}$ 
for $\widehat{P^{1}}$, 
and \textcolor{red}{let} $\rho_2$ \textcolor{red}{be} the ordered basis
$\{
\tau_{f_1}^{yx^2},\, \ldots ,\, \tau_{f_m}^{yx^2},\, \tau_{g_1}^{y^2x},\, 
\tau_{g_1}^{yxy},\, \tau_{f_1}^{xyx},\, \ldots ,\, \tau_{f_m}^{xyx},\, 
\tau_{f_m}^{x^{m+2}},\, \ldots ,\, \tau_{f_1}^{x^{m+2}},\, \tau_{g_1}^{yx^{m+1}},\, \tau_{g_1}^{xyx^m} 
$
$
\tau_{g_1}^{x^{2m+1}}
\}$
for $\widehat{P^{2}}$.
Then the matrix representation $M_2$ of $\widehat{\partial^2}$ with respect to $\rho_1$ and $\rho_2$ is \textcolor{red}{given by} 
$
\left(\begin{array}{cc}
\text{\large$L_{1}$} & \text{\large$0$} \\
\text{\large$0$} & \text{\large$L_{2}$} \\
0\cdots0 & 0\cdots0 \\
\end{array}\right).
$
\end{enumerate}
\end{prop}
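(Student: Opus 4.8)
The statement is quoted from \cite[Lemma 2.4]{IU}, so in the paper it may simply be cited; for the reader's convenience I indicate how the computation runs and where its difficulty sits. The starting point is the $\k$-linear identification $\widehat{P^{r}}\cong\bigoplus_{h\in\cG^{r}}s(h)(\nabla A)t(h)$ used in the proof of \lemref{bas-IU}, under which a dual basis vector $\tau$ is recorded by the family of paths $\bigl(\tau(s(h)\otimes t(h))\bigr)_{h}$ in $\nabla A$. Since $\tau$ is a $(\nabla A)^{\mathrm e}$-homomorphism, evaluating $\widehat{\partial^{2}}(\tau)=\tau\circ\partial^{2}$ on any single summand of the displayed expansion of $\partial^{2}(s(f_{i})\otimes t(f_{i}))$ (resp. of $\partial^{2}(s(g_{j})\otimes t(g_{j}))$) amounts to deleting the one tensor factor $s(h)\otimes t(h)$ on which $\tau$ is non-zero and multiplying the two flanking paths inside $\nabla A$. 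The plan is to carry out this evaluation for the three families $\tau_{x_{i}}$, $\tau_{y_{j}}$, $\tau_{y_{j}}^{x^{m}}$ spanning $\widehat{P^{1}}$, to straighten the resulting paths into the chosen bases of $\widehat{P^{2}}$, and to read off the coefficient matrix; the block shape of $M_{2}$ is then a consequence of the chosen orderings of $\rho_{1},\rho_{2}$, which group together basis vectors of equal path length and list the $\tau_{y_{j}}^{x^{m}}$ in reverse order.

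For $\tau_{x_{i}}$ and $\tau_{y_{j}}$ the evaluation on each generator produces, after at most one application of the defining relations $f_{\bullet},g_{1}$ read as the straightening rules $x_{i}x_{i+1}y_{i+2}=\al x_{i}y_{i+1}x_{i+1+m}+\be y_{i}x_{i+m}x_{i+1+m}$ and $x_{1}y_{2}y_{2+m}=\al y_{1}x_{m+1}y_{m+2}+\be y_{1}y_{m+1}x_{2m+1}$, a combination of the length-$3$ basis vectors $\tau_{f_{\bullet}}^{yx^{2}},\tau_{f_{\bullet}}^{xyx},\tau_{g_{1}}^{y^{2}x},\tau_{g_{1}}^{yxy}$ only; collecting these coefficients (the $\al$-terms partly cancelling, so that e.g.\ $\widehat{\partial^{2}}(\tau_{x_{i}})$ restricted to $f_{i}$ is just $\be\,\tau_{f_{i}}^{yx^{2}}$) yields precisely the $2(n+m)\times 3(n+m)$ block $L_{1}$. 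Dually, $\widehat{\partial^{2}}(\tau_{y_{j}}^{x^{m}})$ -- obtained by inserting the length-$m$ path $x_{j}\cdots x_{j+m-1}$ in place of the arrow $y_{j}$ -- evaluated on the $f_{i}$ gives $x_{i}\cdots x_{i+m+1}$ with coefficients $1,-\al,-\be$ according as $j-i=2,1,0$ (no straightening needed; this is the staircase part of $L_{2}$), and evaluated on $g_{1}$ gives the length-$(m+2)$ paths $x_{1}\cdots x_{m+1}y_{m+2}$, $x_{1}\cdots x_{m}y_{m+1}x_{2m+1}$, $x_{1}y_{2}x_{m+2}\cdots x_{2m+1}$, $y_{1}x_{m+1}\cdots x_{2m+1}$, which after straightening land only on $\tau_{g_{1}}^{yx^{m+1}},\tau_{g_{1}}^{xyx^{m}}$. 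Every remaining block of the matrix vanishes for path-length reasons (recall $m\ge 2$ in the cases at hand), and the basis vectors $\tau_{g_{1}}^{x^{2m+1}}$, together with $\tau_{f_{\bullet}}^{y^{2}}$ when $m=2$, are never hit, producing the rows of zeros.

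The only step that is not pure bookkeeping, and the one into which the scalars $\la_{r}$ enter, is the straightening of the monomials $x^{p}y$ and $x^{p}yx$ inside $\nabla A$. Iterating $x^{2}y=\al xyx+\be yx^{2}$ gives, by a one-line induction on $p$ using $\la_{p+1}=\al\la_{p}+\be\la_{p-1}$ with $\la_{0}=0$, $\la_{-1}=\be^{-1}$ (so $\la_{1}=1$, $\la_{2}=\al$), the identities
\[
x^{p}y\ \equiv\ \la_{p}\,xyx^{p-1}+\be\la_{p-1}\,yx^{p}
\quad\text{and}\quad
x^{p}yx\ \equiv\ \la_{p}\,xyx^{p}+\be\la_{p-1}\,yx^{p+1}\qquad(p\ge 1).
\]
Specializing to $p=m+1$ and $p=m$ rewrites $x_{1}\cdots x_{m+1}y_{m+2}$ and $x_{1}\cdots x_{m}y_{m+1}x_{2m+1}$ into the stated combinations, and after summing the contributions of the relevant summands of $\partial^{2}(s(g_{1})\otimes t(g_{1}))$ the last two rows $(-\la_{2},-\be\la_{1},\dots,\be\la_{m},-\be\la_{m+1})$ and $(\la_{1},\be\la_{0},\dots,\la_{m+1},-\la_{m+2})$ of $L_{2}$ come out directly (using $\la_{2}=\al$, $\la_{1}=1$, $\la_{0}=0$). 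The degenerate conventions for $m=n=1$ -- vanishing columns, and a middle entry equal to the sum of its two neighbours -- are forced merely by the matrices being too small to keep the relevant paths apart and are checked by hand. Apart from this recursion identity, the main obstacle is disciplined tracking of the shifted indices $i,\,i+n,\,i+m,\,i+n+m$ and verifying that no coefficient lands outside the asserted block pattern.
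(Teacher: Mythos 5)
Your proposal is correct and follows essentially the route the paper intends: the paper itself gives no proof of Proposition~\ref{par-IU} beyond citing \cite[Lemma~2.4]{IU}, and the direct evaluation of $\widehat{\partial^{2}}(\tau)=\tau\circ\partial^{2}$ on generators that you sketch is exactly the computation carried out for the analogous Lemma~\ref{par-IM}. Your identification of the key straightening identity $x^{p}y=\be\la_{p-1}yx^{p}+\la_{p}xyx^{p-1}$ matches Lemma~\ref{IU-Lem2.3} and its graded version used in the paper, and your accounting of which basis vectors are hit (and hence which rows vanish) agrees with the stated block form of $M_{2}$.
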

%%%%%%%%%%%%%%%%%%%%%%%%%%%%%%%%%%%
\begin{lem}\label{par-IM}
\begin{enumerate}
\item %Suppose 
\textcolor{red}{Assume} $m=n=1$.

\noindent
Let $\rho_1$ be the ordered basis 
$\{ \tau_{x_1},\, \tau_{x_2},\, \tau_{x_3},\, \tau_{y_1},\, \tau_{y_2},\, \tau_{y_3},\, 
\tau_{y_3}^{x},\, \tau_{y_2}^{x}$, 
$\tau_{y_1}^{x},\, 
\tau_{x_1}^{y},\, \tau_{x_2}^{y},\, \tau_{x_3}^{y} 
\}$ 
 for $\widehat{P^{1}}$, 
and $\rho_2$ the ordered basis
$\{
\tau_{f_1}^{yx^2},\, \tau_{g_1}^{y^2x},\, \tau_{g_1}^{yxy},\, \tau_{f_1}^{xyx},\, 
\tau_{f_1}^{x^3},\, \tau_{g_1}^{yx^2},\, \tau_{g_1}^{xyx},\, \tau_{g_1}^{x^3},\, 
\tau_{g_1}^{y^3}$, 
$ \tau_{f_1}^{y^2x},\, \tau_{f_1}^{yxy},\, \tau_{f_1}^{y^3}
\}$
for $\widehat{P^{2}}$.
Then the matrix representation $M_2$ of $\widehat{\partial^2}$ with respect to $\rho_1$ and $\rho_2$ is \textcolor{red}{given by} 
$
\left(\begin{array}{ccc}
\text{\large$L_{1}$} & \text{\large$0$} & \text{\large$0$} \\
\text{\large$0$} & \text{\large$L_{2}$} & \text{\large$0$} \\
0\cdots0 & 0\cdots0 & 0\cdots0 \\
\text{\large$0$} & \text{\large$0$} & \text{\large$L_{2}$} \\
0\cdots0 & 0\cdots0 & 0\cdots0 \\
\end{array}\right).
$
\item %Suppose 
\textcolor{red}{Assume} $n=2$.

\noindent
Let $\rho_1$ be the ordered basis 
$\{ \tau_{x_1}, \dots , \tau_{x_{n+2m}}, \tau_{y_1}, \dots , \tau_{y_{2n+m}} \}$ 
for $\widehat{P^{1}}$, 
and \textcolor{red}{let} $\rho_2$ \textcolor{red}{be} the ordered basis
$
\{\tau_{f_1}^{yx^2}, \dots ,\tau_{f_m}^{yx^2}, 
\tau_{g_1}^{y^2 x}, \tau_{g_2}^{y^2 x}, 
\tau_{g_1}^{yxy}, \tau_{g_2}^{yxy}, 
\tau_{f_1}^{xyx}, \dots , \tau_{f_m}^{xyx} 
\tau_{g_1}^{x^{1+m}}, \tau_{g_2}^{x^{1+m}} \}
$
for $\widehat{P^{2}}$.
Then the matrix representation $M_2$ of $\widehat{\partial^2}$ with respect to $\rho_1$ and $\rho_2$ is \textcolor{red}{given by} 
$
\left(\begin{array}{ccc}
\text{\large$L_{1}$}\\
0\cdots0\\
0\cdots0\\
\end{array}\right)
$.
\item\label{computecase}
%Suppose 
\textcolor{red}{Assume} $n \geq 3$. 

\noindent
Let $\rho_1$ be the ordered basis 
$\{ \tau_{x_1},\, \ldots,\, \tau_{x_{n+2m}},\, \tau_{y_1},\, \ldots,\, \tau_{y_{2n+m}} \}$ 
for $\widehat{P^{1}}$, 
and \textcolor{red}{let} $\rho_2$ \textcolor{red}{be} the ordered basis
$
\{\tau_{f_1}^{yx^2},\, \ldots ,\,\tau_{f_m}^{yx^2},\, 
\tau_{g_1}^{y^2 x},\, \ldots,\, \tau_{g_n}^{y^2 x},\, 
\tau_{g_1}^{yxy},\, \ldots,\, \tau_{g_n}^{yxy},\, 
\tau_{f_1}^{xyx},\, \ldots,\, \tau_{f_m}^{xyx} \}
$
for $\widehat{P^{2}}$.
Then the matrix representation $M_2$ of $\widehat{\partial^2}$with respect to $\rho_1$ and $\rho_2$ is \textcolor{red}{given by} $L_{1}$. 
\end{enumerate}
\end{lem}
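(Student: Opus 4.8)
The plan is to compute the matrix of $\widehat{\partial^2}$ directly on the ordered bases $\rho_1$ of $\widehat{P^1}$ and $\rho_2$ of $\widehat{P^2}$ furnished by Lemma \ref{bas-IM}, by the same method used for \cite[Lemma 2.4]{IU} (Proposition \ref{par-IU}). Under the $\k$-linear isomorphism $\widehat{P^r}\cong\bigoplus_{h\in\cG^r}s(h)\,\nabla A\,t(h)$ established in the proof of Lemma \ref{bas-IM}, the map $\widehat{\partial^2}=\Hom_{(\nabla A)^{\mathrm e}}(\partial^2,\nabla A)$ carries a homomorphism $\tau$ to the homomorphism $s(h)\otimes t(h)\mapsto\tau\bigl(\partial^2(s(h)\otimes t(h))\bigr)$ for $h\in\cG^2=\{f_1,\dots,f_m,g_1,\dots,g_n\}$. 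Since $\partial^2$ is written out above as an explicit $\k$-linear combination of terms $u\,(s(h')\otimes t(h'))\,v$ with $h'\in\cG^1$ and $u,v$ paths of $\nabla A$, and each $\tau$ in $\rho_1$ is $(\nabla A)^{\mathrm e}$-linear and vanishes on all generators of $\cG^1$ but one, the element $\widehat{\partial^2}(\tau)(s(f_i)\otimes t(f_i))$ — and similarly at $g_j$ — is an explicit $\k$-linear combination of length-three paths of $\nabla A$ from $s(f_i)$ to $t(f_i)$ (resp.\ from $s(g_j)$ to $t(g_j)$).

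The computational core is to rewrite each of these combinations in terms of the chosen $\k$-basis of $s(f_i)\,\nabla A\,t(f_i)$ and $s(g_j)\,\nabla A\,t(g_j)$. Working modulo the admissible ideal $\mathcal{I}$, the defining relations $f_i=0$ and $g_j=0$ eliminate the monomials $x_ix_{i+n}y_{i+2n}$ and $x_jy_{j+n}y_{j+n+m}$ in favour of the other two monomials of the corresponding relation; for the longer pure-$x$ or pure-$y$ paths that survive only when $n=1$, and for the additional parallel paths present when $n=m=1$, one iterates these eliminations, and this iteration is precisely what introduces the coefficients $\la_r$ of the bottom two rows of $L_2$ through the recursion $\la_{r+2}=\al\la_{r+1}+\be\la_r$ with $\la_0=0,\ \la_{-1}=\be^{-1}$. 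After collecting coefficients, the rows of $M_2$ indexed by the basis vectors $\tau_{f_i}^{yx^2},\,\tau_{f_i}^{xyx}$ ($1\le i\le m$) and $\tau_{g_j}^{y^2x},\,\tau_{g_j}^{yxy}$ ($1\le j\le n$) assemble into the $2(n+m)\times 3(n+m)$ matrix $L_1$ (which becomes $L_{1,1}$ when $\al\ne 0$), while the rows indexed by any further basis vector of $\widehat{P^2}$ account for the remaining blocks; the index shifts $i\mapsto i+n,\,i+m,\,i+n+m$ occurring in $\partial^2$ are what make $L_1$ banded with its $2m+n$-th column as the hinge.

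It then remains to pin down these remaining blocks in each of the three cases. For $n\ge 3$, Lemma \ref{bas-IM}(6) shows that $\widehat{P^2}$ has no basis vector besides the $2(n+m)$ listed ones, so $M_2=L_1$. For $n=2$, the only additional basis vectors are $\tau_{g_1}^{x^{1+m}}$ and $\tau_{g_2}^{x^{1+m}}$; since neither $f_i$ nor $g_j$ has a monomial that is a product of $x$'s alone, no path arising from $\widehat{\partial^2}(\tau)$ reduces modulo $\mathcal{I}$ to $x_j x_{j+2}\cdots x_{j+2m}$, so these two rows vanish identically and $M_2$ is $L_1$ with two zero rows adjoined below. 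For $n=m=1$, the four basis vectors $\tau_{f_1}^{yx^2},\tau_{g_1}^{y^2x},\tau_{g_1}^{yxy},\tau_{f_1}^{xyx}$ again produce the (degenerate) $L_1$; the triple $\tau_{f_1}^{x^3},\tau_{g_1}^{yx^2},\tau_{g_1}^{xyx}$ can only be reached from the columns $\tau_{y_3}^{x},\tau_{y_2}^{x},\tau_{y_1}^{x}$ and the triple $\tau_{g_1}^{y^3},\tau_{f_1}^{y^2x},\tau_{f_1}^{yxy}$ only from the columns $\tau_{x_1}^{y},\tau_{x_2}^{y},\tau_{x_3}^{y}$, each yielding a copy of the (degenerate) matrix $L_2$; and $\tau_{g_1}^{x^3}$ and $\tau_{f_1}^{y^3}$ are unreachable, giving the two zero rows. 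This is exactly the asserted block form once $L_1$ and $L_2$ are read with the degenerate conventions recorded in the text for $m=n=1$.

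The main obstacle is the combinatorial bookkeeping of the reductions modulo $\mathcal{I}$. When $n$ and $m$ are small, several arrows of $\cQ_{\nabla A}$ become parallel — most severely for $n=m=1$, where $x_i$ and $y_i$ share source and target — so that many more length-three paths survive in $\nabla A$, and eliminating the non-basis monomials must be carried out recursively, which is where the sequence $\la_r$ enters and where sign errors (from the $-\al$ and $-\be$ terms of $f_i,g_j$) and mislabellings (which reduced monomial equals which basis vector) are easy to commit. Once the reductions are organised case by case, the block decompositions of $M_2$ follow by inspection.
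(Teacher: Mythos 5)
Your proposal is correct and follows essentially the same route as the paper: a direct column-by-column evaluation of $\widehat{\partial^2}$ on the bases of Lemma~\ref{bas-IM}, reducing the resulting length-three paths modulo the relations $f_i$, $g_j$ exactly as in \cite[Lemma 2.4]{IU} and reading off the coefficients of $L_1$ and $L_2$. Your bidegree/reachability observation explaining why the rows indexed by $\tau_{g_j}^{x^{1+m}}$, $\tau_{g_1}^{x^3}$, $\tau_{f_1}^{y^3}$ vanish and why the $L_2$ blocks decouple from the $L_1$ block is a clean way to organize what the paper dispatches as ``a similar calculation.''
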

%%%
\begin{proof}
These \textcolor{red}{claims} 
%statements are 
\textcolor{red}{can be proved in} the similar calculation 
%as 
in \cite[Lemma 2.4]{IU}. 
We only compute the first and second columns of \eqref{computecase}.
\textcolor{blue}{The others are proved in a similar way.}
For $h \in \cG^{2}$, we have
%%%%%%%%%%%%%
\[
\widehat{\partial^{2}}(\tau_{x_{1}})(s(h)\otimes t(h))
=(\tau_{x_{1}} \circ \partial^{2})(s(h)\otimes t(h))
\]
\begin{align*}
&=
%%%%%%%%%%%%%
\begin{cases}
\tau_{x_{1}}(
&\hspace{-10pt}(s(x_{i}) \otimes t(x_{i}))x_{i+n}y_{i+2n}
+x_{i}(s(x_{i+n}) \otimes t(x_{i+n}))y_{i+2n}\\
&\quad\quad+x_{i}x_{i+n}(s(y_{i+2n}) \otimes t(y_{i+2n}))
)\\
&\hspace{-10pt}-\alpha(
(s(x_{i}) \otimes t(x_{i}))y_{i+n}x_{i+n+m}
+x_{i}(s(y_{i+n}) \otimes t(y_{i+n}))x_{i+n+m}\\
&\quad\quad+x_{i}y_{i+n}(s(x_{i+n+m}) \otimes t(x_{i+n+m}))
)\\
&\hspace{-10pt}-\beta(
(s(y_{i}) \otimes t(y_{i}))x_{i+m}x_{i+n+m}
+y_{i}(s(x_{i+m}) \otimes t(x_{i+m}))x_{i+n+m}\\
&\quad\quad+y_{i}x_{i+m}(s(x_{i+n+m}) \otimes t(x_{i+n+m}))
)\quad\text{if $h=f_{i}$ for $1 \leq i \leq m$,}\\
\tau_{x_{1}}(
&\hspace{-10pt}(s(x_{j}) \otimes t(x_{j}))y_{j+n}y_{j+n+m}
+x_{j}(s(y_{j+n}) \otimes t(y_{j+n}))y_{j+n+m}\\
&\quad\quad+x_{j}y_{j+n}(s(y_{j+n+m}) \otimes t(y_{j+n+m}))
)\\
&\hspace{-10pt}-\alpha(
(s(y_{j}) \otimes t(y_{j}))x_{j+m}y_{j+n+m}
+y_{j}(s(x_{j+m}) \otimes t(x_{j+m}))y_{j+n+m}\\
&\quad\quad+y_{j}x_{j+m}(s(y_{j+n+m}) \otimes t(y_{j+n+m}))
)\\
&\hspace{-10pt}-\beta(
(s(y_{j}) \otimes t(y_{j}))y_{j+m}x_{j+2m}
+y_{j}(s(y_{j+m}) \otimes t(y_{j+m}))x_{j+2m}\\
&\quad\quad+y_{j}y_{j+m}(s(x_{j+2m}) \otimes t(x_{j+2m}))
)\quad\quad\text{if $h=g_{j}$ for $1 \leq j \leq n$;}\\
\end{cases}\\
%%%%%%%%%%%%%%
&=
\begin{cases}
\beta y_{1}x_{m+1}x_{n+m+1}
& \text{if $h=f_{1}$,}\\
0 & \text{if $h=f_{i}$ for $2 \leq i \leq m$,}\\
\alpha y_{1}x_{m+1}y_{n+m+1}+ \beta y_{1}y_{m+1}x_{2m+1}
& \text{if $h=g_{1}$,}\\
0 & \text{if $h=g_{j}$ for $2 \leq j \leq n$;}\\
\end{cases}\\
%%%%%%%%%%%%%%
&=\beta \tau_{f_{1}}^{yx^{2}} +\beta \tau_{g_{1}}^{y^{2}x} +\alpha \tau_{g_{1}}^{yxy}.
%%%%%%%%%%%%%%
\end{align*}
%%%%%%%%%%%%%%
\textcolor{red}{Hence we} obtain the first column of $L_{1}$.
%%%%%%%%%%%%%%%%%%%%%%%%%%%%%%%%%%%%%%%%

%%%%%%%%%%%%%%
Since $n \geq 3$, for $h \in \cG^{2}$, we have
\begin{align*}
\widehat{\partial^{2}}(\tau_{x_{2}})(s(h)\otimes t(h))
&=(\tau_{x_{2}} \circ \partial^{2})(s(h)\otimes t(h))\\
%%%%%%%%%%%%%%
%%%%%%%%%%%%%%%
&=
\begin{cases}
x_{2}x_{n+2}y_{2n+2}-\alpha x_{2}y_{n+2}x_{n+m+2}
& \text{if $h=f_{2}$,}\\
0 & \text{if $h=f_{i}$ for $i=1$ or $3 \leq i \leq m$,}\\
x_{2}y_{n+2}y_{n+m+2}
& \text{if $h=g_{2}$,}\\
0 & \text{if $h=g_{j}$ for $j=1$ or $3 \leq j \leq n$;}\\
\end{cases}\\
%%%%%%%%%%%%%%
&=
\begin{cases}
\beta y_{2}x_{m+2}x_{n+m+2}
& \text{if $h=f_{2}$,}\\
0 & \text{if $h=f_{i}$ for $i=1$ or $3 \leq i \leq m$,}\\
\alpha y_{2}x_{m+2}y_{n+m+2}+ \beta y_{2}y_{m+2}x_{2m+2}
& \text{if $h=g_{2}$,}\\
0 & \text{if $h=g_{j}$ for $j=1$ or $3 \leq j \leq n$;}\\
\end{cases}\\
%%%%%%%%%%%%%%
&=\beta \tau_{f_{2}}^{yx^{2}} +\beta \tau_{g_{2}}^{y^{2}x} +\alpha \tau_{g_{2}}^{yxy}. 
%%%%%%%%%%%%%%
\end{align*}
%%%%%%%%%%%%%%%%%%%%%%%%%%%%%%%
\textcolor{red}{Hence we} obtain the second column of \textcolor{magenta}{$L_{1}$}. 
\end{proof}
%%%%%%%%%%%%%%%%%%%%%%%%%%%%%%%%%%%%%%
%%%%%%%%%%%%%%%%%%%%%%%%%%%%%%%%%%%%%%%%%
\subsection{The rank of the representation \textcolor{blue}{matrices} $M_{2}$}
\label{subsec-3-2}
%%%%%%%%%%%%%%%%%%%%%%%%%%%%%%%%
%In order 
\textcolor{red}{To} compute the rank of $M_{2}$, 
we recall 
%the definitions of a circulant matrix and properties.
\textcolor{red}{the definition of the property of a circulant matrix}.
%%%%%%%%%%%%%%%%%%%%%%%%%%%%%%%%%%%%%%
%%%%%%%%%%%%%%%%%%%%%%%%%%%%%%%
\begin{dfn}[see \cite{I}]
For $r \geq 1$, let $C=[c_{p,q}]_{1\leq p,q \leq r}$ be an $r$-th square matrix.
If there exist $a_{0}, a_{1}, \cdots , a_{r-1}$ 
%for $0 \leq k \leq r-1$ 
such that $a_{k}=c_{p,q}$ for $q-p \equiv k\,\,(\textcolor{magenta}{\mathrm{mod}}\, r)$, 
then $C$ is called a \textit{circulant matrix} 
\textcolor{red}{ and is denoted by $C=C(a_{0}, \ldots , a_{r-1})$.}
%Also, 
\textcolor{red}{For a circulant matrix $C=C(a_{0}, \ldots , a_{r-1})$,} the polynomial $f(x)=\Dsum_{k=0}^{r-1}a_{k}x^{k}$ is called the \textit{associated polynomial of $C$}.
\end{dfn}
%%%%%%%%%%%%%%%%%%%%%%%%%%%%%%%%%%%%%%
\textcolor{magenta}{We compute the rank of matrices as examples of \cite{KT} and \cite{I}.}
%%%%%%%%%%%%%%%%%%%%%%%%%%%%%%%%%%%%%%
\begin{lem}\label{rank-C}
Let $C=C(1,0,\ldots 0,1,0,\ldots, 0)$ be a circulant matrix with the associated polynomial $f(x)=x^{m}+1$, 
and \textcolor{red}{let} $C_{n+m-1}$ \textcolor{red}{be} the principal submatrix of $C$ deleting 
the last row and column\textcolor{red}{,} where $C$ is the circulant matrix with associated polynomial $f(x)=x^{m}+1$, that is,
\[
%%%%%%%%%%%%
C=
\text{{\small
$
\begin{pNiceArray}{cccc:ccc}[first-row, last-col]
  &&& \textcolor{magenta}{m} &&&& \\
  1 &&&& 1 &&& \\
  & \ddots&&&& \ddots && \\
  && \ddots &&&& 1 & \textcolor{magenta}{n} \\
  \hdottedline
  1 &&& \ddots &&&& \\
  & \ddots &&& \ddots &&& \\
  && \ddots &&& \ddots && \\
  &&& 1 &&& 1&
\end{pNiceArray}
$
}},\,
%%%%%%%%%%%%
%%%%%%%%%%%%%
C_{n+m-1}=
\text{{\tiny
$
\begin{pNiceArray}{cccc:c:ccc}[first-row,last-col]
  &&& \textcolor{magenta}{n}& \textcolor{magenta}{n+1}&&&&\\
  1&&&&&1&&&\\
  &\ddots&&&&&\ddots&&\\
  &&\ddots&&&&&1&\textcolor{magenta}{n}\\
  \hdottedline
  &&&\ddots&&&&&\textcolor{magenta}{n+1}\\
  \hdottedline
  1&&&&\ddots&&&&\\
  &\ddots&&&&\ddots&&&\\
  &&\ddots &&&&\ddots&&\\
  &&&1 &&&&1&
\end{pNiceArray}
$
}}.
%%%%%%%%%%%%%
\]
%%%%%%%%%%%%%
Then \textcolor{red}{we have the following\textup{:} }
\begin{enumerate}
\item
$
\rank C
=
\begin{cases}
(n+m)-1 & \text{\textup{if $n$ and $m$ are odd,}}\\
(n+m)  &\text{\textup{otherwise.}}\\
\end{cases}
$\label{rank-C-equ1}
\item
$
\rank C_{n+m-1}=n+m-1\textcolor{red}{.}
$\label{rank-C-equ2}
\end{enumerate}
\end{lem}
%%%%%%%%%%%%%%%%%%%%%%%%%%%%%%%%%%%%%%
\begin{proof}
\eqref{rank-C-equ1}
By \cite[Proposition 1.1]{I}, the rank of $C$ is equal to $(n+m)- d$\textcolor{red}{,}
where $d$ is the degree of $\gcd(x^{(n+m)}-1, f(x))$.
Since $\gcd(n, m)=1$, we have $\gcd(x^{2m}-1, x^{2n}-1)=(x^{2}-1)$. 
%and then 
\textcolor{red}{Then} 
$
\gcd(x^{(n+m)}-1, f(x))=
\gcd(x^{(n+m)}-1, x^{m}+1)=
\begin{cases}
(x+1) & \text{if $n$ and $m$ are odd,}\\
1  &\text{otherwise.}\\
\end{cases}
$
Therefore, we obtain
\[  
\rank C
%%%%%%%%
=
\begin{cases}
(n+m)-\deg(x+1) & \text{if $n$ and $m$ are odd,}\\
(n+m)-\deg(1)  &\text{otherwise,}\\
\end{cases}
%%%%%%%%
=
\begin{cases}
(n+m)-1 & \text{if $n$ and $m$ are odd,}\\
(n+m)  &\text{otherwise.}\\
\end{cases}
\]
%%%%%%%%%%%%%
\eqref{rank-C-equ2}
Let $p(t)$ be the characteristic polynomial of $C$, then $\dfrac{1}{n}p'(t)$ is the characteristic polynomial of $C_{n+m-1}$ by \cite[proof of Theorm 2.1]{KT}. 
Let $\zeta$ be an $(n+m)$-th primitive root of unity, then 
%the 
\textcolor{red}{$\zeta$} satisfies $(z^{n+m}-1)=\prod_{r=0}^{n+m-1}(z-\zeta^{r})$. 
We have
$p(t)=\det (tE_{n+m}-C)=\prod_{r=0}^{n+m-1}(t-1-\zeta^{r})=((t-1)^{n+m}-1)$. 
%and then
\textcolor{red}{Then} 
$p'(t)=(n+m)(t-1)^{n+m-1}$. 
%Thus, 
\textcolor{red}{Hence} $\textcolor{magenta}{\det(C_{n+m-1})}=\dfrac{1}{n}p'(0)=(-1)^{n+m-1}(n+m)\neq 0$. 
%and then 
\textcolor{red}{Therefore,} $\rank C_{n+m-1} = n+m-1$.
\end{proof}
%%%%%%%%%%%%%%%%%%%%%%%%%%%%%%%%%%%%%%
\begin{lem}\label{rank-L1}
If $m >n>1$ and $gcd(n, m) = 1$, 
then 
$
\rank\,L_{1}=
\begin{cases}
n+m-1 \quad \text{\textup{if $n+m$ is even and $\alpha=0$}}, \\
n+m \quad \text{\textup{otherwise}}.
\end{cases}
$
\end{lem}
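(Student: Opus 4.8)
The matrix $L_1$ is a $2(n+m)\times 3(n+m)$ matrix whose nonzero entries are only $\beta$'s and $\alpha$'s, placed along several "staircase" diagonals with a cyclic wrap-around controlled by the shift parameters $n$ and $m$. The plan is to exploit this cyclic structure: after a suitable permutation of rows and columns, $L_1$ (or a square submatrix controlling its rank) becomes block-circulant, so its rank can be read off from the roots of an associated polynomial in the sense of Definition following \cite{I}. First I would partition the $2(n+m)$ rows into the $m+n$ rows coming from the $f_i$-relations (the $\beta,\alpha$-pattern in the top blocks) and the $n+m$ rows... more precisely into the two natural groups of $m$ and $n$ rows that wrap with period $n$ and period $m$ respectively, and likewise group the $3(n+m)$ columns into the $n+2m$ columns indexed by the $x_i$ and the $2n+m$ columns indexed by the $y_j$. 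Since $\gcd(n,m)=1$, the combined index set $\mathbb Z/2(n+m)\mathbb Z$ is acted on transitively by the shifts, and this is exactly what forces the whole matrix to be a single circulant-type block rather than a direct sum of smaller ones.

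The key computational step is then to identify the associated polynomial. Reading off the three diagonals in the $\beta$-part (entries $\beta$ at offsets $0$ and a second offset, and $-\beta$ at two further offsets) and the two diagonals in the $\alpha$-part, the associated polynomial $f(t)$ should come out as something of the shape $\beta(1+t^{a})(1-t^{b}) - \alpha\, t^{c}(1-t^{d})$ up to relabelling, i.e. a polynomial whose factorization over $\bar{\mathbb k}$ is essentially $(t^{n+m}-1)$ times a correction, possibly with an extra factor $(t^{n+m}+1)$ or $(t+1)$ appearing precisely when $n+m$ is even. The rank of a circulant is $r$ minus the number of common roots of its associated polynomial with $t^{r}-1$ (counted appropriately); so the generic answer $n+m$ drops by exactly one when there is one forced common root. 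I would show that root is $t=-1$: it lies on the relevant root-of-unity locus only when $n+m$ is even, and it is annihilated by $f$ only when the $\alpha$-contribution vanishes, i.e. $\alpha=0$ — since at $t=-1$ the factor $t^{n+m}-1=0$ kills the $\beta$-part automatically, leaving just the $\alpha$-term, which is $0$ iff $\alpha=0$. This produces exactly the dichotomy in the statement.

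The main obstacle will be the bookkeeping needed to put $L_1$ honestly into circulant form: the degenerate low-dimensional cases (the displayed $4\times6$ matrix for $m=n=1$, and the collapse of columns "under the assumption $n+1<m$") show that the staircase only closes up cleanly once $m>n>1$, and one must be careful that the wrap-around in the $x$-columns versus $y$-columns uses the correct offsets $n$ and $m$ and does not create spurious coincidences of diagonals (which would happen if $n$ and $m$ shared a factor — here ruled out by $\gcd(n,m)=1$). Once the block-circulant normal form is in hand, the rank computation is the standard eigenvalue/associated-polynomial argument and is routine. An alternative, if the clean circulant reduction proves awkward, is to exhibit an explicit maximal set of linearly independent columns directly (the first $n+m-1$ or $n+m$ "leading" columns of the staircase) and separately exhibit the single linear dependence when $n+m$ is even and $\alpha=0$ by writing down the explicit kernel vector with $\pm1$ entries alternating according to parity; I would keep this as a fallback for the parity case since the dependent relation there is short and checkable by hand.
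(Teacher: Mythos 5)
Your plan is essentially the paper's proof: the paper row-reduces $L_{1}^{\mathrm T}$ to the block form $\left(\begin{smallmatrix}\beta C & \alpha E_{n+m}\\ 0 & 0\end{smallmatrix}\right)$ with $C$ the circulant of associated polynomial $x^{m}+1$, then applies Ingleton's rank formula, computing $\gcd(x^{n+m}-1,\,x^{m}+1)=\gcd(x^{m}+1,\,x^{n}+1)$, which equals $x+1$ exactly when $n$ and $m$ are both odd (equivalently, $n+m$ even, given $\gcd(n,m)=1$) and $1$ otherwise. One small correction to your mechanism: in the resulting normal form the $\alpha$-part is a separate identity block (so $\alpha\neq 0$ restores full rank immediately rather than entering a combined associated polynomial), and the $\beta$-part vanishes at $t=-1$ because $m$ is odd, not ``automatically because $t^{n+m}-1=0$''; this does not affect your conclusion.
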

%%%
\begin{proof}
%We 
\textcolor{red}{Remark} that 
$
\rank L_{1} = \rank {L_{1}}^{\mathrm{T}}.
$
%Also, 
\textcolor{red}{We} have 
%%%%%%%%%%%%%%%%%%%%%%%%%%%%%%%%%%%%
%%%%%%%%%%%%%%%%%%%%%%%%%%%%%%%%%%%%
\begin{equation}\label{row-op}
%\hspace{-160pt}
\text{{\normalsize $L_{1}^{\mathrm{T}}$}}
\rightarrow
\begin{pNiceArray}{c:c}
\Block{1-1}<\Large>{\beta C} & \Block{1-1}<\Large>{\alpha E_{n+m}} \\
\hdottedline
\Block{1-1}<\Large>{$0$}& \Block{1-1}<\Large>{$0$} \\
\end{pNiceArray}
\textcolor{red}{,}
\end{equation}
\noindent
where \textcolor{red}{$C=C(1,0,\ldots 0,1,0,\ldots, 0)$} is the circulant matrix with the associated polynomial $f(x)=x^{m}+1$, 
and $E_{n+m}$ is the $(n+m)$-th identity matrix. 
\textcolor{red}{Therefore, by Lemma \ref{rank-C} \eqref{rank-C-equ1}, the conclusion follows. }
\textcolor{blue}{Details of the row operations used in \eqref{row-op} are given below. }

%Therefore, by Lemma \ref{rank-C} \eqref{rank-C-equ1}, we obtain the conclusion.
%\textcolor{blue}{
%The details of the row operations in \eqref{row-op} are given from the next page onward.
%}
\end{proof}
%%%%%%%%%%%%%%%%%%%%%%%%%%%%%%%%%%%%
%%%%%%%%%%%%%%%%%%%%%%%%%%%%%%%%%%%%
\begin{landscape}
\textcolor{blue}{The details of the row operations in \eqref{row-op}.}
\quad $L_{1}^{\mathrm{T}} \rightarrow$
\vspace{-3pt}
{\tiny
\arraycolsep=2pt
\begin{align*}
&\begin{pNiceArray}{cccccc:cccc:cccc:cccccc}[first-row, last-col]
&&&&&& \textcolor{magenta}{\makebox[4pt][l]{m}} &&&& \textcolor{magenta}{\makebox[4pt][l]{m+n}} &&&& \textcolor{magenta}{\makebox[4pt][l]{m+2n}} &&&&&&\\
\beta &&&&&& \beta &&&& \alpha&&&&&&&&&& \\
& \ddots &&&&&& \ddots && && \ddots &&&&&&&&& \\
&& \ddots&&&&&& \ddots& &&& \ddots &&&&&&&& \\
&&& \ddots &&&&&& \beta &&&& \alpha&&&&&&& \textcolor{magenta}{n}\\[-5pt]
\hdottedline
\beta &&&& \ddots &&&&& &&&&& \alpha&&&&&& \\
& \ddots &&&& \beta &&&& &&&&&& \ddots &&&&& \\
-\beta && \ddots &&&&&&& & -\alpha&&&&&& \ddots &&&& \\
& \ddots && \ddots &&&&&& && \ddots &&&&&& \ddots &&& \\
&& \ddots && \ddots &&&&& &&& \ddots &&&&&& \ddots && \\
&&& \ddots && \beta &&&& &&&& -\alpha&&&&&& \alpha&\\
-\beta  &&&& \ddots &&&&& &&&&& -\alpha&&&&&& \\
& \ddots  &&&& -\beta &&&& &&&&&& \ddots &&&&& \textcolor{magenta}{2m}\\[-3pt]
\hdottedline
&& \ddots  &&&& -\beta &&& &&&&&&& \ddots &&&& \\
&&&\ddots  &&&& \ddots && &&&&&&&& \ddots &&& \\
&&&&\ddots  &&&& \ddots & &&&&&&&&& \ddots && \\
&&&&& -\beta  &&&& -\beta  &&&&&&&&&& -\alpha& \textcolor{magenta}{2m+n}\\
\hdottedline
-\beta &&&&&& -\beta  &&& & -\alpha&&&&&&&&& &\\
& \ddots &&&&&& \ddots  && && \ddots &&&&&&&&& \\
&& \ddots &&&&&& \ddots  & &&& \ddots &&&&&&&& \\
&&& \ddots &&&&&& -\beta &&&& -\alpha&&&&&&& \textcolor{magenta}{2m+2n}\\[-3pt]
\hdottedline
&&&& \ddots && \beta &&& & \alpha&&&& -\alpha&&&&&& \\
&&&&& \ddots && \ddots && && \ddots &&&& \ddots &&&&& \\
&&&&&& \ddots && \ddots & &&& \ddots &&&& \ddots &&&& \\
&&&&&&& \ddots && \beta &&&& \alpha&&&& \ddots &&& \\
\beta &&&&&&&& \ddots & &&&&& \alpha&&&& \ddots && \\
& \ddots &&&&&&&& -\beta &&&&&& \ddots &&&& -\alpha& \textcolor{magenta}{3m+2n}\\[-3pt]
\hdottedline
&& \ddots &&&& \beta &&& &&&&&&& \ddots &&&& \\
&&& \ddots &&&& \ddots && &&&&&&&& \ddots &&& \\
&&&& \ddots &&&& \ddots & &&&&&&&&& \ddots && \\
&&&&& \beta &&&& \beta &&&&&&&&&& \alpha &\\
\end{pNiceArray}
%%%%%%%%%%%%%%%%%%%%%%%%%%%%
\hspace{-15pt}
\text{{\normalsize $\rightarrow$}}
%\rightarrow
&&\begin{pNiceArray}{cccccc:cccc:cccc:cccccc}[first-row, last-col]
&&&&&& \textcolor{magenta}{\makebox[4pt][l]{m}} &&&& \textcolor{magenta}{\makebox[4pt][l]{m+n}} &&&& \textcolor{magenta}{\makebox[4pt][l]{m+2n}} &&&&&&\\
\beta &&&&&& \beta &&&& \alpha &&&&&&&&&& \\
& \ddots &&&&&& \ddots && && \ddots &&&&&&&&& \\
&& \ddots&&&&&& \ddots& &&& \ddots &&&&&&&& \\
&&& \ddots &&&&&& \beta &&&& \alpha &&&&&&&\textcolor{magenta}{n}\\[-5pt]%n
\hdottedline
\beta &&&& \ddots &&&&& &&&&& \alpha &&&&&& \\
& \ddots &&&& \beta &&&& &&&&&& \ddots &&&&& \\
-\beta && \ddots &&&&&&& & -\alpha &&&&&& \ddots &&&& \\
& \ddots && \ddots &&&&&& && \ddots &&&&&& \ddots &&& \\
&& \ddots && \ddots &&&&& &&& \ddots &&&&&& \ddots && \\
&&& \ddots && \beta &&&& &&&& -\alpha &&&&&& \alpha &\\
-\beta  &&&& \ddots &&&&& &&&&& -\alpha &&&&&& \\
& \ddots  &&&& -\beta &&&& &&&&&& \ddots &&&&&\textcolor{magenta}{2m}\\[-3pt]%2m
\hdottedline
&& \ddots  &&&& -\beta &&& &&&&&&& \ddots &&&& \\
&&&\ddots  &&&& \ddots && &&&&&&&& \ddots &&& \\
&&&&\ddots  &&&& \ddots & &&&&&&&&& \ddots && \\
&&&&& -\beta  &&&& -\beta  &&&&&&&&&& -\alpha &\textcolor{magenta}{2m+n}\\%2m+n
\hdottedline
\rule{0pt}{38pt}
\Block{1-6}<\huge>{$0$}&&&&&& \Block{1-4}<\huge>{$0$} &&&& \Block{1-4}<\huge>{$0$} &&&& \Block{1-6}<\huge>{$0$}&&&&&&\\
\rule{0pt}{13pt} &&&&&& &&&& &&&& &&&&&&\textcolor{magenta}{2m+2n}\\[-3pt]
%2m+2n
\hdottedline
\rule{0pt}{18pt}
&&&& -\beta && \beta &&& & \alpha &&&& -\alpha &&&&&& \\
&&&&& \ddots && \ddots && && \ddots &&&& \ddots &&&&& \\
&&&&&& \ddots && \ddots & &&& \ddots &&&& \ddots &&&& \\
&&&&&&& \ddots && \beta &&&& \alpha &&&& \ddots &&& \\
\rule{0pt}{20pt}
\Block[B]{2-2}{\text{\arraycolsep=1pt
  $\begin{array}{ccc} 
    \beta && \\[-3pt]
    & \ddots & \\[-3pt]
    && \beta 
  \end{array}$}} 
  &&&&&&&& \ddots & &&&&& 
\Block[B]{2-2}{\text{\arraycolsep=1pt
  $\begin{array}{ccc} 
    \alpha && \\[-3pt]
    & \ddots & \\[-3pt] 
    && \alpha 
  \end{array}$}} 
  &&&& \ddots && \\
&&&&&&&&& -\beta &&&&&&&&&& -\alpha &\textcolor{magenta}{3m+2n}\\[-3pt]
%&&&&& &&&&& &&&&& &&&&& \\%3m+2n
\hdottedline
\rule{0pt}{38pt}
\Block{1-6}<\huge>{$0$}&&&&&& \Block{1-4}<\huge>{$0$} &&&& \Block{1-4}<\huge>{$0$} &&&& \Block{1-6}<\huge>{$0$}&&&&&&\\
\rule{0pt}{13pt}\\
\end{pNiceArray}
\end{align*}
}
\end{landscape}
%%%%%%%%%%%%
\begin{landscape}
{\tiny
\arraycolsep=2pt
\begin{align*}
&\text{{\normalsize $\rightarrow$}}
%%%%%%%%%%%%%%%%%%%%%%%%%%%%%%%%%%%%%
\begin{pNiceArray}{cccccc:cccc:cccc:cccccc}[first-row, last-col]
&&&&&& \textcolor{magenta}{\makebox[4pt][l]{m}} &&&& \textcolor{magenta}{\makebox[4pt][l]{m+n}} &&&& \textcolor{magenta}{\makebox[4pt][l]{m+2n}} &&&&&&\\
\beta &&&&&& \beta &&&& \alpha &&&&& &&&&& \\
& \ddots &&&&&& \ddots &&&& \ddots &&&&&&&&& \\
&& \ddots&&&&&& \ddots&&&& \ddots &&&&&&&& \\
&&& \ddots &&&&&& \beta &&&& \alpha &&&&&&& \textcolor{magenta}{n}\\
\hdottedline
\beta &&&& \ddots &&&&&&&&&& \alpha &&&&&& \\
& \ddots &&&& \beta &&&&&&&&&& \ddots &&&&& \\
&& \ddots &&&& \beta &&&&&&&&&& \ddots &&&& \\
&&& \ddots &&&& \ddots &&&&&&&&&& \ddots &&& \\
&&&& \ddots &&&& \ddots &&&&&&&&&& \ddots && \\
&&&&& \beta &&&& \beta &&&&&&&&&& \alpha & \textcolor{magenta}{m+n}\\
\hdottedline
\Block{1-6}<\huge>{$0$}&&&&&& \Block{1-4}<\huge>{$0$} &&&& \Block{1-4}<\huge>{$0$} &&&& \Block{1-6}<\huge>{$0$}&&&&&& \textcolor{magenta}{2m}\\
\hdottedline
&& -\beta  &&&& -\beta &&&&& &&&&& -\alpha &&&& \\
&&& \ddots &&&& \ddots &&&&& &&&&& \ddots &&& \\
&&&& \ddots &&&& \ddots &&&&& &&&&& \ddots && \\
&&&&& -\beta  &&&& -\beta  &&&&& &&&&& -\alpha & \textcolor{magenta}{2m+n}\\
\hdottedline
\Block{1-6}<\huge>{$0$}&&&&&& \Block{1-4}<\huge>{$0$} &&&& \Block{1-4}<\huge>{$0$} &&&& \Block{1-6}<\huge>{$0$}&&&&&& \textcolor{magenta}{2m+2n}\\
\hdottedline
-\beta &&&& -\beta &&&&&&&&&& -\alpha &&&&&& \\
& \ddots &&&& \ddots &&&&&&&&&& \ddots &&&&& \\
&& \ddots &&&& \ddots &&&&&&&&&& \ddots &&&& \\
&&& \ddots &&&& \ddots &&&&&&&&&& \ddots &&& \\
&&&& \ddots &&&& \ddots &&&&&&&&&& \ddots && \\
&&&&& -\beta &&&& \beta &&&&&&&&&& -\alpha & \textcolor{magenta}{3m+2n}\\
\hdottedline
\Block{1-6}<\huge>{$0$}&&&&&& \Block{1-4}<\huge>{$0$} &&&& \Block{1-4}<\huge>{$0$} &&&& \Block{1-6}<\huge>{$0$}&&&&&&\\
\end{pNiceArray}
%%%%%%%%%%%%%%%%%%%%%%%%%%%%%%%%%%%%%
%\rightarrow
\hspace{-12pt}
\text{{\normalsize $\rightarrow$}}
%%%%%%%%%%%%%%%%%%%%%%%%%%%%%%%%%%%%
\begin{pNiceArray}{cccccc:cccc:cccc:cccccc}[first-row, last-col]
&&&&&& \textcolor{magenta}{\makebox[4pt][l]{m}} &&&& \textcolor{magenta}{\makebox[4pt][l]{m+n}} &&&& \textcolor{magenta}{\makebox[4pt][l]{m+2n}} &&&&&&\\
\beta &&&&&& \beta &&&& \alpha &&&&& &&&&& \\
& \ddots &&&&&& \ddots &&&& \ddots &&&&&&&&& \\
&& \ddots&&&&&& \ddots&&&& \ddots &&&&&&&& \\
&&& \ddots &&&&&& \beta &&&& \alpha &&&&&&& \textcolor{magenta}{n}\\
\hdottedline
\beta &&&& \ddots &&&&&&&&&& \alpha &&&&&& \\
& \ddots &&&& \beta &&&&&&&&&& \ddots &&&&& \\
&& \ddots &&&& \beta &&&&&&&&&& \ddots &&&& \\
&&& \ddots &&&& \ddots &&&&&&&&&& \ddots &&& \\
&&&& \ddots &&&& \ddots &&&&&&&&&& \ddots && \\
&&&&& \beta &&&& \beta &&&&&&&&&& \alpha &\textcolor{magenta}{n+m}\\
\hdottedline
\Block{1-6}<\huge>{$0$}&&&&&& \Block{1-4}<\huge>{$0$} &&&& \Block{1-4}<\huge>{$0$} &&&& \Block{1-6}<\huge>{$0$}&&&&&&\\
\end{pNiceArray}\\
%%%%%%%%%%%%%%%%%%%%%%%%%%%%%%%%%%%%
&\text{{\normalsize $=$}}
\begin{pNiceArray}{c:c}%[first-row, last-col]
\Block{1-1}<\Large>{\beta C} & \Block{1-1}<\Large>{\alpha E_{n+m}} \\
\hdottedline
\Block{1-1}<\Large>{$0$}& \Block{1-1}<\Large>{$0$} \\
\end{pNiceArray}.
\end{align*}
}
\hspace{150pt} 
\textcolor{blue}{Hence \eqref{row-op} is obtained by elementary row operations.}
\end{landscape}
\clearpage
%
%%%%%%%%%%%%%%%%%%%%%%%%%
%%%%%%%%%%%%%%%%%%%%%%%%%%
\subsection{Main results}
\label{subsec-3-3}
%%%%%%%%%%%%%%%%%%%%%%%%%
%%%%%%%%%%%%%%%%%%%%%%%%%%%%%%%
\textcolor{blue}{By a computation similar to that in}
\cite{IU}, we obtain the \textcolor{red}{dimension} formula of $\HH^{r}(\nabla A)$ for $r\geq 0$. 

%%%%%%%%%%%%%%%%%%%%%%%%%%%%%%%%%%
\begin{thm}\label{HHdim-IM}
Let $A=A(\alpha, \beta)$ be a graded down-up algebra with weights $\mathrm{deg}\ x=n$, $\mathrm{deg}\ y=m$ and  $\beta\neq0$ 
where $m \geq n >1$ and $\mathrm{gcd}(n, m)=1$, and $\nabla A$ \textcolor{red}{its} Beilinson algebra. 
%the Beilinson algebra of $A$.
Then we obtain the \textcolor{red}{dimension} formula of the Hochschild cohomology groups $\HH^{r}(\nabla A)$ 
%of $\nabla A$ 
as follows\textup{:}
\begin{itemize}
\item $\dim_{\bbk} \HH^0(\nabla A)=1;$
\item 
$\dim_{\bbk} \HH^1(\nabla A)=
\begin{cases}
2 \quad \text{\textup{for Case I}},\\
1 \quad \text{\textup{for Case I\hspace{-1.2pt}I}};
\end{cases}$
\item $\dim_{\bbk} \HH^2(\nabla A)=
\begin{cases}
m+5 \quad \text{\textup{for $n=2$ and Case I}},\\
m+4 \quad \text{\textup{for $n=2$ and Case  I\hspace{-1.2pt}I}},\\
n+m+1 \quad  \text{\textup{for $n \geq 3$ and Case I}},\\
n+m \quad \text{\textup{for $n \geq 3$ and Case I\hspace{-1.2pt}I}};
\end{cases}$
\item $\dim_{\bbk} \HH^{r}(\nabla A)= 0$ for ${r}\geq 3$.
\end{itemize}
\end{thm}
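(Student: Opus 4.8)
The plan is to compute the cohomology of the complex \eqref{resol-2}, namely
$0 \to \widehat{P^0} \xrightarrow{\widehat{\partial^1}} \widehat{P^1} \xrightarrow{\widehat{\partial^2}} \widehat{P^2} \to 0$,
directly, following the strategy of \cite{IU}. Since $\HH^0(\nabla A)=\k$ is already recorded and $\HH^r(\nabla A)=0$ for $r\geq 3$ follows from $\pd_{(\nabla A)^{\mathrm e}}\nabla A = \mathrm{gl.dim}\,\nabla A = 2$ (Happel, cited above), the only work is to pin down $\dim_\k \HH^1(\nabla A) = \dim_\k\Ker\widehat{\partial^2} - \dim_\k\Im\widehat{\partial^1}$ and $\dim_\k\HH^2(\nabla A) = \dim_\k\widehat{P^2} - \rank\widehat{\partial^2}$. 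The dimensions of $\widehat{P^0},\widehat{P^1},\widehat{P^2}$ are given in Lemma \ref{bas-IM}, the matrix $M_2$ of $\widehat{\partial^2}$ is identified in Lemma \ref{par-IM} (it is essentially $L_1$, possibly with a few extra zero rows, according as $n=2$ or $n\geq 3$), and $\rank L_1$ is computed in Lemma \ref{rank-L1}. So the $\HH^2$ formula is essentially a bookkeeping exercise: for $n\geq 3$, $\dim_\k\widehat{P^2} = 2(n+m)$ and $\rank M_2 = \rank L_1 = n+m-1$ in Case I and $n+m$ in Case II, giving $n+m+1$ and $n+m$ respectively; for $n=2$, $\dim_\k\widehat{P^2} = 2m+6$ and the same rank computation gives $m+5$ and $m+4$. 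I would spell this arithmetic out case by case.

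For $\HH^1$ I would first compute $\rank\widehat{\partial^1}$. The map $\widehat{\partial^1}\colon\widehat{P^0}\to\widehat{P^1}$ sends $\tau_{e_i}$ to the alternating combination of arrows into and out of vertex $i$ coming from $\partial^1$; concretely, in the bases of Lemma \ref{bas-IM}, this is (up to sign) an incidence-type matrix of the quiver $\cQ_{\nabla A}$. Because $\cQ_{\nabla A}$ is connected, the kernel of $\widehat{\partial^1}$ is spanned by $\sum_i\tau_{e_i}$ (this is exactly where $\HH^0 = \k$ comes from), so $\rank\widehat{\partial^1} = \dim_\k\widehat{P^0}-1 = 2(n+m)-1$. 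Then $\dim_\k\HH^1 = \dim_\k\Ker\widehat{\partial^2} - (2(n+m)-1) = (\dim_\k\widehat{P^1} - \rank\widehat{\partial^2}) - (2(n+m)-1)$. Using $\dim_\k\widehat{P^1} = 3(n+m)$ (Lemma \ref{bas-IM}(3), valid for $m>n>1$) and $\rank\widehat{\partial^2} = \rank L_1$, this gives $3(n+m) - \rank L_1 - 2(n+m) + 1 = (n+m+1) - \rank L_1$, which equals $2$ in Case I and $1$ in Case II — precisely the claimed formula. I would present this as a short computation, being careful that the extra zero rows appended to $L_1$ in $M_2$ do not affect the rank.

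The steps, in order: (i) recall $\HH^0 = \k$ and $\HH^{\geq 3} = 0$; (ii) compute $\rank\widehat{\partial^1} = 2(n+m)-1$ from connectedness of $\cQ_{\nabla A}$; (iii) invoke Lemma \ref{par-IM} to identify $\widehat{\partial^2}$ with $L_1$ (plus zero rows) and Lemma \ref{rank-L1} for $\rank L_1$; (iv) assemble $\dim_\k\HH^1 = (n+m+1)-\rank L_1$ and read off the two cases; (v) assemble $\dim_\k\HH^2 = \dim_\k\widehat{P^2} - \rank L_1$ using the values of $\dim_\k\widehat{P^2}$ from Lemma \ref{bas-IM}(5),(6), and read off the four cases. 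The main obstacle is not any single estimate but rather the correctness of the earlier structural lemmas that this proof consumes as black boxes — in particular the precise form of the matrix $M_2$ in Lemma \ref{par-IM} (that for $n=2$ the blocks $L_2$ genuinely collapse and only $L_1$ survives, unlike the $n=1$ cases in \cite{IU} where an $L_2$-block persists) and the circulant-matrix rank computation in Lemma \ref{rank-L1}. Assuming those, the proof of Theorem \ref{HHdim-IM} itself is a routine assembly, and I would write it accordingly, with one displayed line of arithmetic per cohomological degree.
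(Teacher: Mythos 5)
Your proposal is correct and follows essentially the same route as the paper: the paper likewise obtains $\dim_\k\Im\widehat{\partial^1}=2(n+m)-1$ from $\dim_\k\HH^0(\nabla A)=1$, identifies $\rank M_2=\rank L_1$ via Lemma~\ref{par-IM}, and assembles $\dim_\k\HH^1(\nabla A)=(n+m)+1-\rank L_1$ and $\dim_\k\HH^2(\nabla A)=\dim_\k\widehat{P^2}-\rank L_1$ using Lemmas~\ref{bas-IM} and~\ref{rank-L1} exactly as you describe. The only cosmetic difference is that you justify $\dim_\k\Ker\widehat{\partial^1}=1$ by connectedness of the quiver rather than by citing $\HH^0(\nabla A)=\k$, which amounts to the same fact.
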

%%%%%%
\begin{proof}
We compute the dimensions of $\HH^{r}(\nabla A)$ %as the similar way of
\textcolor{blue}{in a similar way as in}
\cite[Proof of Theorem 1.4]{IU}. 

Since $\dim_{\bbk} \HH_{0}(\nabla A)$ \textcolor{blue}{equals $1$}, we obtain
$\dim_{\bbk} \Ker \widehat{\partial^{1}}= \dim_{\bbk}\HH_{0}(\nabla A) =1$. 
\textcolor{red}{Also, we have}
\[
\dim_{\bbk} \Image \widehat{\partial^{1}}
 = \dim_{\bbk} \widehat{P^{0}} - \dim_{\bbk} \Ker \widehat{\partial^{1}}
=2(n+m)-1.
\]

%Therefore, 
\textcolor{red}{By} Lemmas \ref{bas-IM}, \ref{par-IM} \textcolor{red}{and} \ref{rank-L1}, 
\textcolor{red}{we have}
%we also obtain the following equations\textup{;} 

\begin{align*}
\dim_{\bbk} \HH^{1}(\nabla A) 
&= \dim_{\bbk} \Ker \widehat{\partial^{2}} - \dim_{\bbk} \Image \widehat{\partial^{1}}
= (\dim_{\bbk} \widehat{P^{1}} - \dim_{\bbk} \Image \widehat{\partial^{2}}) - \dim_{\bbk} \Image \widehat{\partial^{1}}\\
&= \dim_{\bbk} \widehat{P^{1}} - \rank M_{2} - \dim_{\bbk} \Image \widehat{\partial^{1}}
= \dim_{\bbk} \widehat{P^{1}} - \rank L_{1} - \dim_{\bbk} \Image \widehat{\partial^{1}}\\
&= 3(n+m) - \rank L_{1} - (2(n+m)-1)
= (n+m)+1 - \rank L_{1}. 
\end{align*}

\textcolor{red}{Hence} 
$\dim_{\bbk} \HH^{1}(\nabla A)=
\begin{cases}
2 \quad \text{\textup{for Case I}},\\
1 \quad \text{\textup{for Case I\hspace{-1.2pt}I}}.
\end{cases}
$
%%%%%%%%%

Also, \textcolor{red}{we have}
\begin{align*}
\dim_{\bbk} \HH^{2}(\nabla A) 
&= \dim_{\bbk} \Ker \widehat{\partial^{3}} - \dim_{\bbk} \Image \widehat{\partial^{2}}
= \dim_{\bbk} \widehat{P^{2}} - \dim_{\bbk} \Image \widehat{\partial^{2}}\\
&= \dim_{\bbk} \widehat{P^{2}} - \rank M_{2} 
= \dim_{\bbk} \widehat{P^{2}} - \rank L_{1} \\
&= \dim_{\bbk} \widehat{P^{2}} +\dim_{\bbk} \HH^{1}(\nabla A)  -((n+m)+1) \\
&= \dim_{\bbk} \widehat{P^{2}} +\dim_{\bbk} \HH^{1}(\nabla A)  -((n+m)+1) \\
&=
\begin{cases}
(2n+3m) + \dim_{\bbk} \HH^{1}(\nabla A)  + ((n+m)+1) \quad \text{if $n=2$},\\
2(n+m) + \dim_{\bbk} \HH^{1}(\nabla A)  + ((n+m)+1) \quad \text{if $n \geq 3$}. 
\end{cases}
\end{align*}

\textcolor{red}{Hence} 
$\dim_{\bbk} \HH^{2}(\nabla A)=
\begin{cases}
m+5 \quad \text{\textup{for $n=2$ and Case I}},\\
m+4 \quad \text{\textup{for $n=2$ and Case I\hspace{-1.2pt}I}},\\
n+m+1 \quad  \text{\textup{for $n \geq 3$ and Case I}},\\
n+m \quad \text{\textup{for \textcolor{magenta}{$n \geq 3$} and Case I\hspace{-1.2pt}I}}.
\end{cases}
$
\end{proof}
%%%%%%%%%%%%%%%%%%%
%%%%%%%%
At the end of this section, we give a corollary of Theorem \ref{HHdim-IM} 
for the case %that 
$m > n > 1$\textcolor{red}{,}
 using the same discussion in \cite[Section 3]{IU}.

Regarding the bounded derived category of coherent sheaves on a smooth projective variety, Bondal--Polishchuk \cite{BP} proved the following theorem: 
%%%%%%%%%%%%%%%%%%%%
%%%%%%%%%%%%%%%%%%%%
\begin{thm}[{\cite[Lemma 3.1]{BP}}]\label{BP}
Let $\Db(\coh X)$ be the bounded derived category of coherent sheaves on a smooth projective variety $X$ and 
\textcolor{red}{let} $\kappa$ \textcolor{red}{be} the automorphism on $K_{0}(\Db(\coh X))$ induced by the Serre functor $K$ of $\Db(\coh X)$.
Then the action of $(-1)^{\mathrm{dim}\, X}\mathfrak{\kappa}$ on $K_{0}(\Db(\coh X))$ is unipotent. 
\end{thm}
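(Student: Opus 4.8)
The plan is to reduce the assertion to the unipotence of the operator ``tensoring with a fixed line bundle'' on the Grothendieck group, and then to establish that unipotence by means of the filtration of the Grothendieck group by codimension of support. First the reduction: since $X$ is smooth, $K_{0}(\Db(\coh X))$ is canonically the Grothendieck group $G_{0}(X)$ of coherent sheaves on $X$, and by Serre duality the Serre functor $K$ of $\Db(\coh X)$ is $\mathcal{E}\mapsto\mathcal{E}\otimes\omega_{X}[d]$ with $d=\dim X$. Passing to $K_{0}$, the shift $[d]$ contributes the sign $(-1)^{d}$, so $\kappa([\mathcal{E}])=(-1)^{d}[\mathcal{E}\otimes\omega_{X}]$, whence $(-1)^{d}\kappa$ is precisely the operator $c_{\omega_{X}}\colon[\mathcal{E}]\mapsto[\mathcal{E}\otimes\omega_{X}]$ on $G_{0}(X)$. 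It therefore suffices to show that for every line bundle $L$ on $X$ the operator $c_{L}$ on $G_{0}(X)$ is unipotent.

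To do this I would use the finite topological filtration $G_{0}(X)=F^{0}\supseteq F^{1}\supseteq\cdots\supseteq F^{d}\supseteq F^{d+1}=0$, where $F^{p}$ is generated by the classes $[\mathcal{F}]$ of coherent sheaves with $\codim_{X}\Supp\mathcal{F}\geq p$. Tensoring with a line bundle does not change supports, so $c_{L}$ preserves every $F^{p}$, and the whole point is to prove $(c_{L}-\id)(F^{p})\subseteq F^{p+1}$ for all $p$ — equivalently, that $c_{L}$ induces the identity on each $F^{p}/F^{p+1}$. Since $F^{p}/F^{p+1}$ is generated by the classes $[\mathcal{O}_{Z}]$ of structure sheaves of integral closed subschemes $Z\subseteq X$ of codimension $p$ (d\'evissage of coherent sheaves), it is enough to show $[\mathcal{O}_{Z}\otimes L]\equiv[\mathcal{O}_{Z}]\pmod{F^{p+1}}$.

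For this, write $L|_{Z}\cong\mathcal{O}_{Z}(A-B)$ with $A,B$ effective Cartier divisors on the projective integral scheme $Z$ (fix a projective embedding of $Z$, take $B$ equal to $n$ times a hyperplane section with $n\gg0$ so that $L|_{Z}\otimes\mathcal{O}_{Z}(B)$ is very ample, and let $A$ be a divisor in its linear system; members of a very ample system on an integral scheme are effective Cartier). From the exact sequences $0\to\mathcal{O}_{Z}(-D)\to\mathcal{O}_{Z}\to\mathcal{O}_{D}\to0$ and $0\to\mathcal{O}_{Z}\to\mathcal{O}_{Z}(D)\to\mathcal{O}_{D}(D)\to0$ for $D=A$ and $D=B$ one gets $[\mathcal{O}_{Z}(\pm D)]-[\mathcal{O}_{Z}]\in F^{1}G_{0}(Z)$; applying the filtration-preserving operator $c_{\mathcal{O}_{Z}(-B)}$ and combining gives $[L|_{Z}]-[\mathcal{O}_{Z}]\in F^{1}G_{0}(Z)$. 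Pushing forward along the closed immersion $Z\hookrightarrow X$, which sends $F^{1}G_{0}(Z)$ into $F^{p+1}G_{0}(X)$, yields the desired congruence. Writing $N:=c_{L}-\id$, we conclude $N(F^{p})\subseteq F^{p+1}$, hence $N^{d+1}(G_{0}(X))\subseteq F^{d+1}=0$, so $c_{L}=\id+N$ is unipotent; applying this to $L=\omega_{X}$ shows $(-1)^{d}\kappa=c_{\omega_{X}}$ is unipotent.

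The step I expect to be the main obstacle is the one just sketched: making ``modulo $F^{p+1}$'' rigorous when the supporting subvarieties are singular — expressing $L|_{Z}$ as a difference of effective Cartier divisors, controlling the $\mathcal{T}or$-terms that appear when intersecting with such a divisor, and checking that proper pushforward is compatible with the topological filtration. All of this is classical (it amounts to the statement that $c_{L}$ acts trivially on the graded pieces of the topological, equivalently $\gamma$-, filtration, cf.\ SGA~6), but it is the only place where genuine geometry rather than formal manipulation is used. As a sanity check, after $\otimes\,\mathbb{Q}$ the Grothendieck--Riemann--Roch isomorphism $G_{0}(X)_{\mathbb{Q}}\xrightarrow{\ \sim\ }\bigoplus_{i}\operatorname{CH}^{i}(X)_{\mathbb{Q}}$ turns $(-1)^{d}\kappa$ into multiplication by $\operatorname{ch}(\omega_{X})=1+c_{1}(\omega_{X})+\tfrac12 c_{1}(\omega_{X})^{2}+\cdots$, a degree-raising and hence unipotent operator; the filtration argument then upgrades this from $\mathbb{Q}$- to $\mathbb{Z}$-coefficients.
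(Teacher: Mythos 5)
Your proof is correct. Note, however, that the paper offers no proof of this statement at all: it is quoted verbatim as \cite[Lemma 3.1]{BP} and used as a black box, so there is no internal argument to compare yours against. What you have written is essentially the standard proof, and it is also in substance the one Bondal--Polishchuk rely on: after identifying $(-1)^{\dim X}\kappa$ with $[\mathcal{E}]\mapsto[\mathcal{E}\otimes\omega_X]$, unipotence of $c_L$ follows because $[L]-[\mathcal{O}_X]$ lies in the first step of the (finite, multiplicative) topological filtration of $K_0$ and is therefore nilpotent; your codimension-of-support filtration together with the d\'evissage to classes $[\mathcal{O}_Z]$ and the two divisor sequences is exactly how one verifies that $c_L-\id$ shifts the filtration. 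One small remark: the difficulty you flag about $\mathcal{T}or$-terms does not actually arise in your argument, since you only ever tensor exact sequences of sheaves on $Z$ with a line bundle (which is exact) and push forward along a closed immersion (also exact); the only points that genuinely need care are writing $L|_Z\cong\mathcal{O}_Z(A-B)$ with $A,B$ effective Cartier on the possibly singular integral scheme $Z$ (your very-ampleness argument handles this) and the d\'evissage presentation of $F^p/F^{p+1}$, both of which are classical.
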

%%%%%%%%%%%%%%%%%%%%
To use Theorem \ref{BP} in order to check the correspondence between down-up algebras and some geometrical objects, we recall a useful matrix which is called the \textcolor{red}{{\it Gram matrix}}. 
%%%%%%%%%%%%%%%%%%%%
\begin{dfn}[{see \cite{BP}}]
Let $\mathbf{T}$ be a triangulated category and 
\textcolor{red}{let} $K_{0}(\mathbf{T})$ \textcolor{red}{be} the Grothendieck group of $\mathbf{T}$.
%We denote 
%$[X] \in K_{0}(\mathbf{T})$ as the \textcolor{blue}{isomorphism class} of $X \in \mathbf{T}$.
%We denote 
The \textcolor{blue}{isomorphism class} of $X \in \mathbf{T}$ 
\textcolor{red}{is denoted by}
$[X]  \in K_{0}(\mathbf{T})$. 
Suppose that $\chi$ is the bilinear form on $K_{0}(\mathbf{T})$ defined by 
 \[
\chi([X], [Y]):=\sum_{{k} \in \mathbb{Z}}(-1)^{k}\dim_{\bbk} \Hom^{k}(X, Y).
\]
 If the category $\mathbf{T}$ is generated by an exceptional collection $\{E_{k}\}_{1 \leq {k} \leq r}$ of length $r$, then the Gram matrix $M$ of $\chi$ for this collection of vectors is the $r \times r$ \textcolor{blue}{matrix defined as} $M=(\chi([E_{p}], [E_{q}]))_{1 \leq {p}, {q} \leq r}$.
\end{dfn}
%%%%%%%%%%%%%%%%%%%%
\begin{dfn}[{see \cite{ASS}}]
Let $B$ be a basic finite dimensional ${\bbk}$-algebra with a complete set $\{e_{1}, \dots , e_{r}\}$ of primitive orthogonal idempotents. 
The \textit{Cartan  matrix} of $B$ is defined by the $r \times r$ matrix  $C_{B}=(\dim_{\bbk}e_{p}Be_{q})_{1 \leq {p}, {q} \leq r}$.
\end{dfn}
%%%%%%%%%%%%%%%%%%%%
Let $K_{0}(\Db(\tails A))$ be the Grothendieck group of the bounded derived category $\Db(\tails A)$
and 
\textcolor{red}{let} $\mathfrak{s}$ \textcolor{red}{be} the automorphism on $K_{0}(\Db(\tails A))$ induced by the Serre functor $S$ of $\Db(\tails A)$.
Note that, in this case, the existence of the Serre functor $\mathfrak{s}$ on $\Db(\tails A)$ is proven by de Naeghel--Van den Bergh \cite[Appendix A]{dNV}.
%%%%%%%%%%%%%%%%%%%%
\begin{prop}[{see \cite[Section 3]{IU}}]\label{prop-gram}
Let $A=A(\alpha, \beta)$ be a down up algebra with weights 
$\deg\, x =n$, $\deg\, y=m$, $\beta \neq 0$, 
\textcolor{red}{let $\nabla A$ be its the Beilinson and let} 
$\Db(\tails A)$ be the bounded derived category of $\tails A$. 
%and $\nabla A$ the Beilinson algebra of $A$.
Then 
$\{A(k) \mid 0 \leq {k} \leq 2(n+m)-1 \}$ is a full strong exceptional collection in $\Db(\tails A)$ and
the Gram matrix $M$ of $\chi$ on $\Db(\tails A)$ for this collection of vectors is equal to the Cartan matrix of $\nabla A$.
\end{prop}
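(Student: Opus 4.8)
The plan is to derive both claims from the Minamoto--Mori tilting theory combined with the standard cohomological properties of $\tails A$, in the spirit of \cite[Section~3]{IU}. First I would record that, since $\be\neq 0$, Theorem~\ref{du->AS} (together with \cite{KMP}) makes $A=A(\al,\be)$ a noetherian, hence coherent, AS-regular algebra of dimension $3$ with Gorenstein parameter $\ell=2(n+m)$. Consequently Theorem~\ref{thm-MM} applies, and --- unwinding its proof --- the equivalence $\Db(\tails A)\cong\Db(\mod \nabla A)$ is the tilting equivalence $\mathrm{R}\Hom_{\tails A}(T,-)$ attached to the tilting object $T=\bigoplus_{k=0}^{\ell-1}\pi A(k)$, where $\pi\colon\grmod A\to\tails A$ is the canonical functor, and $\End_{\tails A}(T)\cong\nabla A$ by the very definition of the Beilinson algebra.

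Next I would verify that the objects $E_k:=\pi A(k)$, $k=0,1,\dots,\ell-1$, in this order form a full strong exceptional collection. Fullness is immediate: $T$ is a tilting object, so $E_0,\dots,E_{\ell-1}$ generate $\Db(\tails A)$ as a triangulated category. For the rest I would use the Artin--Zhang description of $\tails A$ \cite{AZ} together with AS-regularity: because $\depth A=3\ge 2$, one has $\Hom_{\tails A}(\pi A,\pi A(d))\cong A_d$ for every $d$, and because $A$ is AS-Gorenstein of dimension $3$ the higher groups $\Ext^i_{\tails A}(\pi A,\pi A(d))$ vanish for $i>0$ in the relevant range of degrees --- this is precisely the vanishing that makes $T$ tilting. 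Hence $\Ext^i_{\tails A}(E_p,E_q)=0$ for all $i>0$ and all $p,q$ (so the collection is strong), $\End_{\tails A}(E_k)=A_0=\k$ (so each $E_k$ is exceptional), and $\Hom_{\tails A}(E_p,E_q)\cong A_{q-p}=0$ whenever $q<p$ since $A$ is nonnegatively graded (so the collection is exceptional in the ordered sense).

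Finally I would compare the two matrices. As $\chi$ is additive on triangles it is preserved by the equivalence, so it suffices to evaluate it on $\{E_k\}$ inside $\Db(\tails A)$; using the vanishing of higher $\Ext$ from the previous step, $M_{pq}=\chi([E_p],[E_q])=\dim_{\k}\Hom_{\tails A}(E_p,E_q)=\dim_{\k}A_{q-p}$ for all $0\le p,q\le\ell-1$ (read as $0$ when $q<p$). On the other side, the matrix presentation $\nabla A=(A_{j-i})_{i,j}$ shows that the $(p,q)$-entry of the Cartan matrix is $\dim_{\k}e_p(\nabla A)e_q=\dim_{\k}A_{q-p}$, again $0$ when $q<p$. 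Comparing entrywise gives $M=C_{\nabla A}$.

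I expect the only real subtlety to be bookkeeping with conventions --- choosing the ordering of the twists $A(k)$ and the labelling of the vertices and idempotents of $\nabla A$ so that $M=C_{\nabla A}$ holds on the nose rather than up to a transpose or a reversal of the index, since the covariant tilting functor sends $\pi A(k)$ to the indecomposable projective $e_{\ell-1-k}\nabla A$ and this reversal happens to wash out precisely because the common entry is $\dim_{\k}A_{q-p}$. The genuine cohomological content --- the identifications $\Hom_{\tails A}(\pi A,\pi A(d))\cong A_d$ and $\Ext^{>0}_{\tails A}(\pi A,\pi A(d))=0$ --- is already supplied by \cite{MM} and \cite{AZ} and requires no new computation here.
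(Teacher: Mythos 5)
Your argument is correct and follows essentially the same route as the paper: the paper simply cites \cite[Propositions 4.3 and 4.4]{MM} for the full strong exceptional collection $\{\pi A(k)\}$ and \cite[Section 3]{BP} for the identification $M_{pq}=\dim_{\k}\Hom(A(p-1),A(q-1))=\dim_{\k}A_{q-p}$, which is exactly the chain of identifications you spell out via the tilting object $T=\bigoplus_k\pi A(k)$ with $\End(T)\cong\nabla A$. Your version just unpacks those citations (vanishing of higher $\Ext$, $\Hom(\pi A,\pi A(d))\cong A_d$, and the entrywise comparison with $\dim_{\k}e_p(\nabla A)e_q=\dim_{\k}A_{q-p}$) rather than quoting them, and correctly flags the indexing convention as the only point needing care.
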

%%%%%%%%%%%%%%%%%%%%%
\begin{proof}
By \cite[Propositions 4.3 and 4.4]{MM}, $\{A(k) \mid 0 \leq {k} \leq 2(n+m)-1 \}$ is a full strong exceptional collection of length $2(n+m)$.
Then the Gram matrix $M$ of $\chi$ for this collection of vectors satisfies 
\noindent
$M=(\dim_{\bbk} \Hom(A({p}-1), A({q}-1))_{1 \leq {p}, {q} \leq 2(n+m)}$ (see \cite[Section 3]{BP}).
Therefore, \textcolor{red}{the claim is proven.} 
\end{proof}
%%%%%%%%%%%%%%%%%%%%%
\begin{cor}\label{cor-exiPV}
Let $A=A(\alpha, \beta)$ be a down up algebra with weights 
$\deg\, x =n$, $\deg\, y=m$, $\beta \neq 0$ 
where $\gcd(n ,m)=1$ and $1 < n< m$, and the bounded derived category $\Db(\tails A)$ of $\tails A$.
Also, let $\nabla A$ be the Beilinson algebra of A and $\HH^{r}(\nabla A)$ the Hochschild cohomology groups of $\nabla A$.
Then \textcolor{red}{we have}
\[
\chi_{\HH}(\HH^{\bullet}(\nabla A))=
\begin{cases}
m+4 \quad &\text{if $n=2$},\\
n+m \quad &\text{if $n \geq 3$}, \\
\end{cases}
\quad \neq 2(n+m)
=\rank K_{0}(\Db(\tails A))\textcolor{red}{, } 
\]
where \textcolor{red}{we set}
\[
\chi_{\HH}(\HH^{\bullet}(\nabla A)):=\sum_{{k} \in \mathbb{Z}}(-1)^{k}\dim_{\bbk} \HH^{k}(\nabla A).
\]
\textcolor{red}{Hence} $\mathfrak{s}$ does not act unipotently on $K_{0}(\Db(\tails A))$.
\end{cor}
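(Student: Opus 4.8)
The plan is to unwind the definitions of the quantities appearing in Corollary~\ref{cor-exiPV} and to reduce the statement to the numerical data already in hand. First I would record that, by Proposition~\ref{prop-gram}, $K_0(\Db(\tails A))$ is the free abelian group on the full strong exceptional collection $\{A(k)\mid 0\le k\le 2(n+m)-1\}$, so $\rank K_0(\Db(\tails A))=2(n+m)$; this is the right-hand side of the displayed inequality. Next I would compute $\chi_{\HH}(\HH^\bullet(\nabla A))$ directly from Theorem~\ref{HHdim-IM}: by that theorem $\HH^r(\nabla A)=0$ for $r\ge 3$, so
\[
\chi_{\HH}(\HH^\bullet(\nabla A))=\dim_\k\HH^0(\nabla A)-\dim_\k\HH^1(\nabla A)+\dim_\k\HH^2(\nabla A).
\]
Plugging in the case-by-case values of Theorem~\ref{HHdim-IM} and observing that the $+1$ coming from Case~I in $\HH^1$ is exactly cancelled by the $+1$ coming from Case~I in $\HH^2$, one finds the alternating sum is $m-2+(m+5)=\dots$ wait—more carefully, for $n=2$ it is $1-\dim_\k\HH^1+\dim_\k\HH^2$, and since $\HH^2$ exceeds the Case~II value by the same amount $\HH^1$ does, the Case~I/Case~II distinction drops out, giving $1-1+(m+4)=m+4$; similarly for $n\ge 3$ one gets $1-1+(n+m)=n+m$. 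This yields the stated value of $\chi_{\HH}(\HH^\bullet(\nabla A))$.

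The remaining arithmetic point is that this value is never equal to $2(n+m)$ under the hypotheses $\gcd(n,m)=1$ and $1<n<m$. For $n=2$ this is the inequality $m+4\ne 2(m+2)=2m+4$, i.e.\ $m\ne 0$, which holds. For $n\ge 3$ it is $n+m\ne 2(n+m)$, i.e.\ $n+m\ne 0$, which again holds. (One should note $n=2$ and $n\ge 3$ together exhaust the range $n>1$, so no case is missed.) This establishes the displayed strict inequality $\chi_{\HH}(\HH^\bullet(\nabla A))\ne\rank K_0(\Db(\tails A))$.

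Finally I would deduce the last sentence. By Theorem~\ref{thm-MM} (applied to the $3$-dimensional AS-regular algebra $A$, cf.\ Theorem~\ref{du->AS}) there is a triangulated equivalence $\Db(\tails A)\cong\Db(\mod\nabla A)$, and it is a standard fact that under such an equivalence the Hochschild cohomology is an invariant of the derived category, so $\HH^\bullet(\nabla A)\cong\HH^\bullet(\Db(\tails A))$ as graded vector spaces, hence $\chi_{\HH}(\HH^\bullet(\nabla A))$ equals the Euler characteristic of the Hochschild cohomology of $\Db(\tails A)$. If $\mathfrak{s}$ acted unipotently on $K_0(\Db(\tails A))$, then, following the argument in \cite[Section~3]{IU} based on Theorem~\ref{BP}, the Euler characteristic of the Hochschild cohomology would have to equal the rank $2(n+m)$ of $K_0(\Db(\tails A))$; the displayed inequality contradicts this. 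Therefore $\mathfrak{s}$ does not act unipotently, which is the assertion.

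I expect the only genuine obstacle here to be bookkeeping: making sure the Case~I versus Case~II values in Theorem~\ref{HHdim-IM} are combined correctly so that the alternating sum is genuinely case-independent, and being careful that the $n=2$ formula for $\dim_\k\HH^2$ (namely $m+5$ in Case~I, $m+4$ in Case~II) really does differ from the $n\ge3$ formula only in a way that does not affect the argument. The passage from "non-equality of Euler characteristics" to "$\mathfrak{s}$ is not unipotent" is already carried out in \cite[Section~3]{IU}, so here it is a citation rather than new work.
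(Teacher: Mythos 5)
Your proposal is correct and follows essentially the same route as the paper: compute $\chi_{\HH}(\HH^{\bullet}(\nabla A))$ case by case from Theorem~\ref{HHdim-IM} (noting the Case~I/Case~II distinction cancels in the alternating sum), compare with $\rank K_{0}(\Db(\tails A))=2(n+m)$ from the exceptional collection of Proposition~\ref{prop-gram}, and derive a contradiction with unipotence of $\mathfrak{s}$ via the trace argument (unipotent $\Rightarrow$ $\tr\mathfrak{s}=2(n+m)$, while Happel's trace formula identifies $\chi_{\HH}$ with $-\tr\Phi_{\nabla A}=\tr\mathfrak{s}$), which is exactly what the paper spells out. Your aside about Hochschild cohomology being a derived invariant is superfluous to the argument, but since you also defer to the correct mechanism (the Gram/Coxeter-matrix computation from \cite[Section 3]{IU} and \cite{BP}), there is no gap.
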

%%%%%%%%%%%%%%%%%%%%%
\begin{proof}
Assume that $\mathfrak{s}$ acts unipotently on $K_{0}(\Db(\tails A))$.
Let $M$ be the Gram matrix.
Then $\mathfrak{s}=M^{-1}M^{T}$ is unipotent (see \cite{BP}).
By Proposition \ref{prop-gram}, the Coxeter matrix $\Phi_{\nabla A}$ of $\nabla A$
 is defined by $-M^{-T}M$ (see Happel \cite[Section 1]{H2}). 
Then we have $-\tr \Phi_{\nabla A}=\tr M^{-T}M= \tr M^{-1}M^{T} = \tr \mathfrak{s}$.
Since all eigenvalues of a unipotent matrix are equal to $1$, 
$\tr \mathfrak{s}=2(n+m)$ holds.
By Happel's trace formula (\cite[Theorem 2.2]{H2}), 
\textcolor{blue}{ we have} $\chi_{\HH}(\HH^{\bullet}(\nabla A))= -\tr \Phi_{\nabla A}=2(n+m)$.
On the other hand, by Theorem \ref{HHdim-IM}, 
%we have
$\chi_{\HH}(\HH^{\bullet}(\nabla A))=
\begin{cases}
m+4 \quad &\text{if $n=2$,}\\
n+m \quad &\text{if $n \geq 3$.}\\
\end{cases}$
\textcolor{red}{But, this} is a contradiction. 
Therefore, $\mathfrak{s}$ does not act unipotently. 
\end{proof}
%%%%%%%%%%%%%%%%%%%%%%%%%%%%%%%%%%%%%%%%%%%%%%%%%
%%%%%%%%%%%%%%%%%%%%%%%%%%%%%%%%%%%%%%%%%%%%%%%%%%%
\begin{rem}\label{rem-cor}
By Corollary \ref{cor-exiPV}, we obtain the same conclusion \textcolor{red}{in} \cite[Section 3]{IU}; 
when $m > n > 1$, $\Db(\tails A)$ is not equivalent to the derived category of any smooth projective surface. 
This result is the facts corresponding to results by \cite{Bel} and \cite{IU}: 
\begin{itemize}
  \item \cite[Remark 26]{Bel}\textup{:} 
  In the case that $(n,m)=(1,1)$, the Serre functor unipotently acts on $\Db(\tails A)$.
  \item \cite[Proposition 3.2]{IU}\textup{:} 
  In the case that $(n,m)=(1,2)$, the Serre functor unipotently acts on $\Db(\tails A)$.  
  \textcolor{magenta}{On the other hand, when $(n,m)=(1,m)$ with $m \geq 3$, the Serre functor unipotently does not act on $\Db(\tails A)$.}
\end{itemize}
\end{rem}
%%%%%%%%%%%%%%%%%%%%%%%%%%%%%%%%%%%%%%%%%%%%%%%%%%%%%
%%%%%%%%%%%%%%%%%%%%%%%%%%%%%%%%%%%%%%%%%%%%%%%%%%%%%
%%%%%%%%%%%%%%%%%%%%%%%%%%%%%%%%%%%%%%%%%%%%%%%%%%%%%
%%%%%%%%%%%%%%%%%%%%%%%%%%%%%%%%%%%%%%%%%%%%%%%%%%%
\section{Basis of Hochschild cohomology groups}
\label{sec-basis}
%\textcolor{magenta}{
%As in Section \ref{HHgroups}, we assume that $A=A(\alpha, \beta)$ is a graded down-up algebra with weights $\mathrm{deg}\ x=n$, $\mathrm{deg}\ y=m$ and $\beta\neq0$, 
%where $m \geq n >1$ and $\mathrm{gcd}(n, m)=1$. 
%Also, $\nabla A$ denotes the Beilinson algebra of $A$.  
%}
\textcolor{magenta}{As in Section \ref{HHgroups}, let $A=A(\alpha, \beta)$ be a graded down-up algebra with weights $\mathrm{deg}\ x=n$, $\mathrm{deg}\ y=m$ and $\beta\neq0$, 
where $m \geq n >1$ and $\mathrm{gcd}(n, m)=1$. Let $\nabla A$ denote the Beilinson algebra of $A$.}
%%%%%%%%%%%%%%%%%%%%%%%%%%%%%%%%%%%%%%%%%%%%%%%%%%%%%%%%
\textcolor{red}{In this section, we compute bases of the Hochschild cohomology groups of $\nabla A$ in order to determine (or study) their Yoneda ring structure.}
%In this section, we will compute the basis of the Hochschild cohomology groups to consider the ring structure. 
%%%%%%%%%%%%%%%%%%%%%%%%%
To compute \textcolor{red}{bases} of $\HH^{1}(\nabla A)$ and $\HH^{2}(\nabla A)$,
we define ${\bbk}$-vector subspaces of $\widehat{P^{1}}$ as 
\[
\widehat{P^{1}}_{L_{1}}:= 
\langle \tau_{x_1}, \dots , \tau_{x_{n+2m}}, \tau_{y_1}, \dots , \tau_{y_{2n+m}} \rangle_{\bbk},\,
%%%%%%%%%%%%%%%%
\widehat{P^{1}}_{L_{2}}:=
\langle \tau_{y_{m+2}}^{x^m}, \dots , \tau_{y_1}^{x^m} \rangle_{\bbk},\,
%%%%%%%%%%%%%%%%
\widehat{P^{1}}^{\star}_{L_{2}}:= 
\langle \tau_{x_1}^{y}, \tau_{x_2}^{y}, \tau_{x_3}^{y}
\rangle_{\bbk},
\]
%%%%%%%%%%%%%%%%
and subspaces of $\widehat{P^{2}}$ as 
%%%%%%%%%%%%%%%%
\begin{align*}
&
\widehat{P^{2}}_{L_{1}}:= 
\langle \tau_{f_{i_1}}^{yx^2}, \tau_{f_{i_2}}^{xyx}, 
\tau_{g_{j_1}}^{y^2 x}, \tau_{g_{j_2}}^{yxy}
\mid 1\leq i_{1}, i_{2} \leq m, 1\leq j_{1}, j_{2} \leq n \rangle_{\bbk}, \\
%%%%%%%%%%%%%%%
&
\widehat{P^{2}}_{L_{2}}:=
\langle \tau_{f_i}^{x^{m+2}}, \tau_{g_1}^{yx^{m+1}}, \tau_{g_1}^{xyx^m} \mid 1\leq i \leq m \rangle_{\bbk},\, 
%%%%%%%%%%%%%%%
\widehat{P^{2}}_{L_{2}}^{\star}:=
\langle \tau_{g_1}^{y^3}, \tau_{f_1}^{y^2x}, \tau_{f_1}^{yxy} \rangle_{\bbk}.
\end{align*}
%%%%%%%%%%%%%%%
%%%
%%%%%%%%%%%%%%%%%%%%%%%%%%
%We put the restriction of morphisms
\textcolor{red}
{We denote by
$\widehat{\partial^{2}}_{L_{1}}:=\widehat{\partial^{2}}|_{\widehat{P^{2}}_{L_{1}}}$, 
$\widehat{\partial^{2}}_{L_{2}}:=\widehat{\partial^{2}}|_{\widehat{P^{2}}_{L_{2}}}$, 
$\widehat{\partial^{2}}^{\star}_{L_{2}}:=\widehat{\partial^{2}}|
_{\widehat{P^{2}}^{\star}_{L_{2}}}$ 
the restrictions of $\widehat{\partial^{2}}$ to the indicated subspaces.}
%\textcolor{magenta}{
%Also, 
%let $\rho_{1}$ be an ordered basis 
%$\{ \tau_{x_1}, \dots , \tau_{x_{n+2m}},\tau_{y_1}, \dots , \tau_{y_{2n+m}} \}$
%of $\widehat{P^{1}}_{L_{1}}$, $\rho_{2}$ an ordered basis 
%$$\{\tau_{f_1}^{yx^2},\, \ldots ,\,\tau_{f_m}^{yx^2},\, 
%\tau_{g_1}^{y^2 x},\, \ldots,\, \tau_{g_n}^{y^2 x},\, 
%\tau_{g_1}^{yxy},\, \ldots,\, \tau_{g_n}^{yxy},\, 
%\tau_{f_1}^{xyx},\, \ldots,\, \tau_{f_m}^{xyx} \}
%$$
%of $\widehat{P^{2}}_{L_{1}}$, and $L_{1,1}$ a matrix defined in Section \ref{HHgroups}. 
%We denote a ${\bbk}$-linear morphism defined by $\rho_{1}$, $\rho_{2}$, and $L_{1,2}$ is denoted by $\widehat{\partial^{2}}_{L_{1,1}}: \widehat{P^{1}}_{L_{1}}\rightarrow \widehat{P^{2}}$
%}
%%
\textcolor{magenta}{Let $\rho_{1}=\{ \tau_{x_1}, \dots , \tau_{x_{n+2m}},\tau_{y_1}, \dots , \tau_{y_{2n+m}} \}$ be an ordered basis of $\widehat{P^{1}}_{L_{1}}$, 
and let $\rho_{2}=
\{\tau_{f_1}^{yx^2},\, \ldots ,\,\tau_{f_m}^{yx^2},\, 
\tau_{g_1}^{y^2 x},\, \ldots,\, \tau_{g_n}^{y^2 x},\, 
\tau_{g_1}^{yxy},\, \ldots,\, \tau_{g_n}^{yxy},\, 
\tau_{f_1}^{xyx},\, \ldots,\, \tau_{f_m}^{xyx} \}
$ 
be the ordered basis of $\widehat{P^{2}}_{L_{1}}$ given above. 
Let $L_{1,1}$ be the matrix defined in Section \ref{HHgroups} 
corresponding to the restriction $\widehat{\partial^{2}}_{L_{1,1}}$. 
We denote the $\bbk$-linear map represented by $L_{1,1}$ with respect to the bases 
$\rho_{1}$ and $\rho_{2}$ 
by $\widehat{\partial^{2}}_{L_{1,1}}:\,\widehat{P^{1}}_{L_{1}}\rightarrow \widehat{P^{2}}$.}
%%%%%%%%%%%%%%%%%%%%%%%%%
%%%%%%%%%%%%%%%%%%%%%%%%%
%%%%%%%%%%%%%%%%%%%%%%%%%
\begin{lem}
We set $N:=\Ker \widehat{\partial^{2}}_{L_{1,1}}$.
The kernel of \textcolor{magenta}{$\widehat{\partial^{2}}$} 
\textcolor{red}{in \eqref{resol-1}} can be 
%written 
\textcolor{red}{described} as follows\textup{:}

%%%%%%%%%%%
{\large
\begin{enumerate}
%%%%%%%%%
\item If $m=n=1$, then 
\begin{align*}
\Ker \widehat{\partial^{2}} \hspace{-2pt} =\hspace{-3pt}
\begin{cases}
N \oplus \langle 
\tau_{x_{2}},\, 
\tau_{y_2}^{x},\, 
\beta \tau_{y_{3}}^{x} + \tau_{y_{1}}^{x},\, 
\tau_{x_2}^{y},\, \beta \tau_{x_{3}}^{y} + \tau_{x_{1}}^{y} 
\rangle_{\bbk} 
&\hspace{-8pt} \text{\textup{for Case I (and Case $1$),}}\\
N
\oplus \langle 
(\frac{\alpha}{2})^{2}\tau_{y_3}^{x}+ (\frac{\alpha}{2}) \tau_{y_{2}}^{x} + \tau_{y_{1}}^{x},\, 
(\frac{\alpha}{2})^{2}\tau_{x_3}^{y}+ (\frac{\alpha}{2}) \tau_{x_{2}}^{y} + \tau_{x_{1}}^{y} 
\rangle_{\bbk} 
&\hspace{-8pt} \text{\textup{for Case I\hspace{-1.2pt}I and Case $2$,}}\\
N
&\hspace{-8pt} \text{\textup{for Case I\hspace{-1.2pt}I and Case $3$.}}\\
\end{cases}
\end{align*}
%%%%%%%%%%%%%%%
\item If $m>n=1$, then 
\begin{align*}
\Ker \widehat{\partial^{2}} \hspace{-2pt} =\hspace{-3pt}
\begin{cases}
N \oplus \left\langle \Dsum_{r=1}^{m} (-1)^{r-1}\tau_{x_{n+r}},\, 
\Dsum_{r=2}^{m+2} \lambda_{r-1}\tau^{x^{m}}_{y_{r}},\, 
\beta \Dsum_{r=1}^{m+2} \lambda_{r-2}\tau^{x^{m}}_{y_{r}}
\right\rangle_{\bbk}
& \hspace{-8pt} \text{\textup{for Case I (and Case $1$),}}\\
N
\oplus \left\langle
\Dsum_{r=2}^{m+2} \lambda_{r-1}\tau^{x^{m}}_{y_{r}},\, 
\beta \Dsum_{r=1}^{m+2} \lambda_{r-2}\tau^{x^{m}}_{y_{r}}
\right\rangle_{\bbk}
& \hspace{-8pt} \text{\textup{for Case I\hspace{-1.2pt}I and Case $1$,}}\\
N
\oplus \left\langle 
\Dsum_{r=1}^{m+2} (\frac{\alpha}{2})^{r-1}\tau^{x^{m}}_{y_{r}}
\right\rangle_{\bbk}
& \hspace{-8pt} \text{\textup{for Case I\hspace{-1.2pt}I and Case $2$,}}\\
N
& \hspace{-8pt} \text{\textup{for Case I\hspace{-1.2pt}I and Case $3$.}}\\
\end{cases}
\end{align*}
%%%%%%%%%%%
\item If $m>n>1$, then 
$
\Ker \widehat{\partial^{2}}=
\begin{cases}
N \oplus \left\langle
\Dsum_{r=1}^{m} (-1)^{r-1}\tau_{x_{n+r}}
\right\rangle_{\bbk}
& \text{\textup{for Case I,}}\\ 
N
& \text{\textup{for Case I\hspace{-1.2pt}I.}} \\
\end{cases}
$
%%%%%%%%%%%%
\end{enumerate}
}
\end{lem}

\begin{proof}
By Proposition \ref{par-IU} and Lemma \ref{par-IM}, it is enough to compute the kernel spaces of $L_{1}$ and $L_{2}$.
By the proof of Lemma \ref{rank-L1},
$\Dsum_{r=1}^{m} (-1)^{r-1}\tau_{x_{n+r}} \in \Ker \widehat{\partial^{2}}_{L_{1}}$
if and only if \textrm{Case I} holds.
By Lemma \ref{rank-L1}, 
\[
\Ker \widehat{\partial^{2}}_{L_{1}}=
\begin{cases}
N \oplus \left\langle\Dsum_{r=1}^{m} (-1)^{r-1}\tau_{x_{n+r}}\right\rangle_{\bbk}
& \text{\textup{for Case I,}}\\
N & \text{\textup{for Case I\hspace{-1.2pt}I.}}\\
\end{cases}
\]
The basis of kernel space $L_{2}$ is obtained by the proof of \cite[Lemma 2.8]{IU}.
\end{proof}
%%%%%%%%%%%%
\begin{prop}
\label{basHH-1}
The ${\bbk}$-vector space $\HH^{1}(\nabla A)$ is \textcolor{red}{described as follows}. 
%that is, we obtain 
\textcolor{red}{A} basis of $\HH^{1}(\nabla A)$ \textcolor{red}{is given in} 
\textup{Table \ref{table-HH1}}\textup{:}

%%%%%%%%%%%%%%%%%%%
%%%%%%%%%%%%%%%%%%%
\renewcommand{\arraystretch}{2.5}
\begin{longtable}{|c|c|c|c||c|} 
\caption{List of basis of $\HH^{1}(\nabla A)$} \label{table-HH1}\\ \hline
%%%%%%%%%%%%%%%%%%
$n$ & $m$ & \textup{Cond.$1$}& \textup{Cond.$2$} & {Basis of $\HH^{1}(\nabla A)$}
\\ \hline
\endfirsthead
%%%%%%%%%%%%%%%%%%
%%%%%%m=n=1%%%%%%
%%%%%%%%%%%%%%%%%%%
$n=1$& $m=1$ & \textup{Case I} & \textup{Case $1$} &
\hspace{-8pt}
$\left\{
\left[\Dsum_{r=1}^{n+2m} \tau_{x_{r}}\right],  
\left[\tau_{x_{2}}\right],  
\left[\tau_{y_2}^{x}\right], 
\left[\beta \tau_{y_{3}}^{x} + \tau_{y_{1}}^{x}\right], 
\left[\tau_{x_2}^{y}\right], 
\left[\beta \tau_{x_{3}}^{y} + \tau_{x_{1}}^{y}\right]
\right\}$
\\[6pt] \hline
%%%%%%%%%%%%%
$n=1$ & $m=1$ & \textup{Case I\hspace{-1.2pt}I} & \textup{Case $2$} &
\hspace{-8pt}
$\left\{
\left[\Dsum_{r=1}^{n+2m} \tau_{x_{r}}\right],  
\left[(\frac{\alpha}{2})^{2}\tau_{y_3}^{x}+ (\frac{\alpha}{2}) \tau_{y_{2}}^{x} + \tau_{y_{1}}^{x}\right],   
\left[(\frac{\alpha}{2})^{2}\tau_{x_3}^{y}+ (\frac{\alpha}{2}) \tau_{x_{2}}^{y} + \tau_{x_{1}}^{y}\right] 
\right\} $
\\[6pt] \hline
%%%%%%%%%%%%%%
$n=1$ & $m=1$ & \textup{Case I\hspace{-1.2pt}I} & \textup{Case $3$} &
\hspace{-8pt}
$\left\{
\left[\Dsum_{r=1}^{n+2m} \tau_{x_{r}}\right]
\right\}$
\\[6pt] \hline
%%%%%%%%%%%%%%%%%%
%%%%%%m>n=1$%%%%%%
%%%%%%%%%%%%%%%%%
$n=1$ & $m>n $ & \textup{Case I} & \textup{Case $1$} &
\hspace{-8pt}
$\left\{
\left[\Dsum_{r=1}^{n+2m} \tau_{x_{r}}\right], 
\left[\Dsum_{r=1}^{m} (-1)^{r-1}\tau_{x_{n+r}}\right],  
\left[\Dsum_{r=2}^{m+2} \lambda_{r-1}\tau^{x^{n}}_{y_{r}}\right],  
\left[\beta \Dsum_{r=1}^{m+2} \lambda_{r-2}\tau^{x^{n}}_{y_{r}}\right]
\right\}$
\\[6pt]\hline
%%%%%%%%%%%%%%%%%%
$n=1$ & $m>n$ & \textup{Case I\hspace{-1.2pt}I} & \textup{Case $1$} &
\hspace{-8pt}
$\left\{
\left[\Dsum_{r=1}^{n+2m} \tau_{x_{r}}\right], 
\left[\Dsum_{r=2}^{m+2} \lambda_{r-1}\tau^{x^{m}}_{y_{r}}\right],  
\left[\beta \Dsum_{r=1}^{m+2} \lambda_{r-2}\tau^{x^{m}}_{y_{r}}\right]
\right\}$
\\[6pt] \hline
%%%%%%%%%%%%%%%%%%
$n=1$ & $m>n$ & \textup{Case I\hspace{-1.2pt}I} & \textup{Case $2$} &
\hspace{-8pt}
$\left\{
\left[\Dsum_{r=1}^{n+2m} \tau_{x_{r}}\right], 
\left[\Dsum_{r=1}^{m+2} (\frac{\alpha}{2})^{r-1}\tau^{x^{m}}_{y_{r}}\right]
\right\}$
\\[6pt] \hline
%%%%%%%%%%%%%%%%%
$n=1$ & $m>n$ & \textup{Case I\hspace{-1.2pt}I} & \textup{Case $3$} &
\hspace{-8pt}
$\left\{
\left[\Dsum_{r=1}^{n+2m} \tau_{x_{r}}\right]
\right\}$
\\[6pt] \hline
%%%%%%%%%%%%%%%%%%
%%%%%%m>n>1%%%%%%%
%%%%%%%%%%%%%%%%%%
$n>1$ & $m>n$ & \textup{Case I} & \hrulefill &
\hspace{-8pt}
$\left\{
\left[\Dsum_{r=1}^{n+2m} \tau_{x_{r}}\right], 
\left[\Dsum_{r=1}^{m} (-1)^{r-1}\tau_{x_{n+r}}\right]
\right\}$
\\[6pt] \hline
%%%%%%%%%%%%%%%%%%
\textcolor{red}{$n>1$} & \textcolor{red}{$m>n$} & \textup{Case I\hspace{-1.2pt}I} & \hrulefill &
\hspace{-8pt}
$\left\{
\left[\Dsum_{r=1}^{n+2m} \tau_{x_{r}}\right]
\right\}$
\\[6pt] \hline
%%%%%%%%%%%%%%%%%%%%%%%%
\end{longtable}
\renewcommand{\arraystretch}{2.5}
\end{prop}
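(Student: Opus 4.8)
The plan is to compute $\HH^1(\nabla A)$ as the cohomology $\Ker\widehat{\partial^2}/\Im\widehat{\partial^1}$ of the complex \eqref{resol-2}. The space $\Ker\widehat{\partial^2}$ has just been described in the preceding lemma, so three things remain: to make $\Im\widehat{\partial^1}$ explicit, to check that the cosets listed in Table~\ref{table-HH1} are cocycles, and to check that they are linearly independent modulo $\Im\widehat{\partial^1}$. Since $\dim_\k\HH^1(\nabla A)$ is already known from Theorem~\ref{HHdim-IM} (and, for $n=1$, from \cite{Bel} and \cite{IU}), the number of classes in each row will then equal the dimension, which forces them to be a basis. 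For $n=1$ the listed classes, and their linear independence, can be read off from the computations in \cite{IU} (and \cite{Bel} when $m=1$); so I would concentrate on the new case $m>n>1$, where $\widehat{P^1}=\widehat{P^1}_{L_1}$ has basis $\{\tau_{x_i},\tau_{y_j}\}$.

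First I would dualise $\partial^1$. Applying $\Hom_{(\nabla A)^{\mathrm e}}(-,\nabla A)$ to the defining formula for $\partial^1$ and using the identification $\widehat{P^1}\cong\bigoplus_{h\in\cG^1}s(h)\,\nabla A\,t(h)$ of Lemma~\ref{bas-IM}, one obtains $\widehat{\partial^1}(\tau_{e_k})=\tau_{x_k}-\tau_{x_{k-n}}+\tau_{y_k}-\tau_{y_{k-m}}$ for each vertex $e_k$, where a summand is dropped whenever its index leaves the admissible range. In particular $\Im\widehat{\partial^1}\subseteq\widehat{P^1}_{L_1}$, and since $\Ker\widehat{\partial^1}=\HH^0(\nabla A)=\k\cdot\sum_k\tau_{e_k}$ is one-dimensional, $\dim_\k\Im\widehat{\partial^1}=2(n+m)-1$.

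Second, the cocycle conditions. The class $\bigl[\sum_{r=1}^{n+2m}\tau_{x_r}\bigr]$ occurs in both rows with $m>n>1$. To see $\sum_r\tau_{x_r}\in\Ker\widehat{\partial^2}$ I would use the observation that, for a relation $\rho\in\cG^2$ (that is $\rho=f_i$ or $\rho=g_j$), applying $\sum_k\tau_{x_k}$ to $\partial^2(s(\rho)\otimes t(\rho))$ picks out, in each monomial of $\rho$, precisely the positions occupied by an $x$-arrow; hence $\bigl(\sum_k\tau_{x_k}\circ\partial^2\bigr)(s(\rho)\otimes t(\rho))$ equals $(\text{number of }x\text{'s in a monomial of }\rho)\cdot\rho$, which is $2\rho=0$ if $\rho=f_i$ and $\rho=0$ if $\rho=g_j$, both vanishing in $\nabla A$; the same computation also shows $\sum_r\tau_{x_r}\in N$. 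In Case~I the second generator $\sum_{r=1}^m(-1)^{r-1}\tau_{x_{n+r}}$ lies in $\Ker\widehat{\partial^2}$ precisely because Case~I holds, which is exactly the computation carried out in the proof of Lemma~\ref{rank-L1}; in Case~II there is no second generator.

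Third, linear independence modulo $\Im\widehat{\partial^1}$. The one point needing an argument is $\bigl[\sum_r\tau_{x_r}\bigr]\neq 0$. If $\sum_r\tau_{x_r}=\sum_k c_k\,\widehat{\partial^1}(\tau_{e_k})$, then equating coefficients of $\tau_{y_j}$ gives $c_j=c_{j+m}$ for $1\le j\le 2n+m$, and equating coefficients of $\tau_{x_i}$ gives $c_i-c_{i+n}=1$ for $1\le i\le n+2m$; summing the last equation over all admissible $i$ telescopes, and after using the first set of relations to identify the two end blocks of indices one is left with $\sum_{r=1}^n(c_r-c_{r+n})=n$ on the left while the right-hand side is $n+2m$, a contradiction. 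In Case~I a similar coefficient comparison shows that no nonzero combination $a\sum_r\tau_{x_r}+b\sum_r(-1)^{r-1}\tau_{x_{n+r}}$ lies in $\Im\widehat{\partial^1}$, so the two classes are independent; their number ($2$ in Case~I, $1$ in Case~II) equals $\dim_\k\HH^1(\nabla A)$ from Theorem~\ref{HHdim-IM}, so they form a basis. I expect the bookkeeping in this last step — keeping track of which of the basis vectors $\tau_{x_i},\tau_{y_j}$ (and, when $n=1$, of the extra generators $\tau^{x^m}_{y_r}$, $\tau^{y}_{x_i}$) can appear in $\Im\widehat{\partial^1}$, together with the care needed at the ends of the index ranges — to be the main technical obstacle.
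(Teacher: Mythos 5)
Your proposal is correct and follows essentially the same route as the paper: both identify $\Im\widehat{\partial^{1}}$ explicitly, prove $\Dsum_{r=1}^{n+2m}\tau_{x_{r}}\notin\Im\widehat{\partial^{1}}$ by the same coefficient/telescoping contradiction on the vertex coefficients $\mu_{e_k}$, and finish with the dimension count against the description of $\Ker\widehat{\partial^{2}}$. The only divergence is at the Case I independence step, where the paper avoids your ``further coefficient comparison'' by first showing $\Im\widehat{\partial^{1}}\subseteq N$ (specializing to generic parameters, where $\Ker\widehat{\partial^{2}}=N$, and noting that $\widehat{\partial^{1}}$ and $N$ do not depend on $\al,\be$), so that the extra generator lies in a complement of $N$ and its class is automatically independent; your version also goes through (comparing the coefficient equations at $i$ and $i+m$ forces the coefficient of the second generator to vanish), but is slightly more work.
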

%%%%%%%%%%%%%%%%%%%%%%%%%%%%%%%%%%%%%%%%%%%%%%
\begin{proof}
\textcolor{magenta}{For $m \geq n \geq 1$, in \textup{Case I\hspace{-1.2pt}I and Case $3$,} 
we have $\Image \widehat{\partial^{1}} \subset \Ker \widehat{\partial^{2}}=N$. 
Since $\widehat{\partial^{1}}$ and $N$ do not depend on the parameters $\alpha, \beta$, it follows that $\Image \widehat{\partial^{1}} \subset N$ for all $\alpha$, $\beta$.}
%Since $\widehat{\partial^{1}}$ and $N$ are independent of the parameters 
%$\alpha, \beta$, we have $\Image \widehat{\partial^{1}} \subset N$. 
%Also, 
By Lemma \ref{rank-L1}, \textcolor{red}{we have}
\begin{align*}
\dim_{\bbk} (N/\Image \widehat{\partial^{1}})
&=\dim_{\bbk} N -\dim_{\bbk} \Image \widehat{\partial^{1}}
=(3(n+m)-\rank L_{1,1})-(\dim_{\bbk}\widehat{P^{0}}-\dim_{\bbk} \Ker\widehat{\partial^{1}})\\
&=(3(n+m)-(n+m))-(2(n+m)-1)=1.
\end{align*}
%\vspace{-11pt}
To prove 
$\left\langle \left[ \Dsum_{r=1}^{n+2m} \tau_{x_{r}} \right] \right\rangle_{\bbk} 
= N / \Image \widehat{\partial^{1}}$ as $\bbk$-vector spaces, 
we 
%will 
show that $\Dsum_{r=1}^{n+2m} \tau_{x_{r}} \in N \setminus \Image \widehat{\partial^{1}}$.
It is clear that $\Dsum_{r=1}^{n+2m} \tau_{x_{r}} \in N$. 
\textcolor{red}{Using proof by contradiction, we %will 
show that $\Dsum_{r=1}^{n+2m} \tau_{x_{r}} \notin \Image \widehat{\partial^{1}}$.}
Assume that $\Dsum_{r=1}^{n+2m} \tau_{x_{r}} \in \Image\widehat{\partial^{1}}$. 
Then there exists $\phi \in \widehat{P^{0}}$ such that 
$\widehat{\partial^{1}} (\phi)=\Dsum_{r=1}^{n+2m} \tau_{x_{r}}\,\cdots (1)$. 
%%%
%By Lemma \ref{bas-IM}, 
%$\phi$ has the representation by $\tau_{e_{1}},\dots, \tau_{e_{2(n+m)}}$: 
%$\Dsum_{r=1}^{n+2m} \mu_{e_{r}}\tau_{e_{r}}$
%where ${\mu_{e_{r}}\in {\bbk}}$.
By Lemma \ref{bas-IM}, there exists $\mu_{e_{r}}\in {\bbk}$ such that 
$\phi = \Dsum_{r=1}^{n+2m} \mu_{e_{r}}\tau_{e_{r}} \in \widehat{P^{0}}\,\cdots (2)$. 
%%%
%For each $h \in \cG_{1}$,
%\begin{align*}
%\widehat{\partial^{1}} (\phi)(s(h)\otimes t(h))
%&=\phi \partial^{1}(s(h)\otimes t(h))
%=\phi (s(h)\otimes s(h)h-ht(h)\otimes t(h))\\
%&=\sum_{r=1}^{n+2m} \mu_{e_{r}}\tau_{e_{r}}(s(h)\otimes s(h)h-ht(h)\otimes t(h))
%=\mu_{s(h)}h-\mu_{t(h)}h.
%\end{align*}
By (1) and (2), for $h \in \cG_{1}$,  \textcolor{red}{we have}
\begin{align*}
\widehat{\partial^{1}} (\phi)(s(h)\otimes t(h))
&=\phi \partial^{1}(s(h)\otimes t(h))
=\phi (s(h)\otimes s(h)h-ht(h)\otimes t(h))\\
&=\sum_{r=1}^{n+2m} \mu_{e_{r}}\tau_{e_{r}}(s(h)\otimes s(h)h-ht(h)\otimes t(h))
=\mu_{s(h)}h-\mu_{t(h)}h\,\,\cdots (3).
\end{align*}
%%%%%
%%%
On the other hand, by (1) and (2), for $h \in \cG_{1}$, 
\textcolor{red}{we have}
\begin{align*}
\widehat{\partial^{1}} (\phi)(s(h)\otimes t(h))&=
\left(\Dsum_{r=1}^{n+2m} \tau_{x_{r}}\right)(s(h)\otimes t(h))
=
\begin{cases}
x_{r} & \text{if $h=x_{r}$ for $1 \leq r \leq 2(n+m)$},\\
0 & \text{otherwise}
\end{cases}
\quad \cdots (4). 
\end{align*}
By (3) and (4),  
for $p \in \{ 1,\ldots ,n+2m \}$ and $q \in \{ 1,\ldots ,2n+m \}$, 
\textcolor{red}{the following equality hold: }
\begin{center}
$\mu_{e_{p+n}}-\mu_{e_{p}}=-1$ and $\mu_{e_{q+m}}-\mu_{e_{q}}=0 \,\cdots (5) $.
\end{center}
%%%
%%
%
%
%
%
%On the other hand, for each $h \in \cG_{1}$,
%\[
%\widehat{\partial^{1}} (\phi)(s(h)\otimes t(h))=
%\left(\Dsum_{r=1}^{n+2m} \tau_{x_{r}}\right)(s(h)\otimes t(h))
%=
%\begin{cases}
%x_{r} & \text{if $h=x_{r}$ for $1 \leq r \leq 2(n+m)$},\\
%0 & \text{otherwise};
%\end{cases}
%\]
%then we have equations
%%%%%%%%%%%%%%%
%$
%\begin{cases}
%\mu_{e_{p+n}}-\mu_{e_{p}}=-1
%& \text{for $1 \leq p \leq n+2m$,}\\
%%%%
%\mu_{e_{q+m}}-\mu_{e_{q}}=0
%& \text{for $1 \leq q \leq 2n+m$.}
%\end{cases}
%$
%%%%
%%%%%%%%%%%%%%
%%%
By (5), for all $k \in \{1,\ldots ,n\}$, 
\textcolor{red}{the following equality hold: }
\begin{align*}
\mu_{e_{n+2m+{k}}}-\mu_{e_{k}}
&=(\mu_{e_{n+2m+{k}}}-\mu_{e_{2m+{k}}})
+(\mu_{e_{2m+{k}}}-\mu_{e_{m+{k}}})
+(\mu_{e_{m+{k}}}-\mu_{e_{k}})=-1
\,\,\cdots (6).
\end{align*}
%%
%\noindent
%Hence, for $1 \leq {k} \leq n$, 
%$
%\mu_{e_{n+2m+{k}}}-\mu_{e_{k}}
%=(\mu_{e_{n+2m+{k}}}-\mu_{e_{2m+{k}}})
%+(\mu_{e_{2m+{k}}}-\mu_{e_{m+{k}}})
%+(\mu_{e_{m+{k}}}-\mu_{e_{k}})
%=-1$.
%%%%
\textcolor{blue}{Let $a$ be the quotient and $b$ the remainder when $2(n+m)$ is divided by $n$,}
that is, we take integers $a,b \in \mathbb{Z}$ such that 
$2(n+m)=an+b$ and $0\leq b <n$ $\cdots (7)$. 
%%%
When $b=0$, the former case does not happen. 
In this case, we consider each part as zero. 
By (6) and (7), for all $q \in \{b+1, \ldots ,n\}$, 
we obtain \textcolor{red}{equalities}
%equations 
%%%
\begin{align*}
\begin{cases}
\mu_{e_{2(n+m)-b+{p}}}-\mu_{e_{p}}
=\Dsum_{r=1}^{a} (\mu_{e_{{p}+nr}}-\mu_{e_{{p}+n(r-1)}})
=-a 
& \text{ for $1 \leq {p} \leq b$}\,\,\cdots (8);\\
%%%
\mu_{e_{n+2m-b+{q}}}-\mu_{e_{q}}
=\Dsum_{r=1}^{a-1} (\mu_{e_{{q}+nr}}-\mu_{e_{{q}+n(r-1)}})
=-(a-1)
& \text{ for $b+1 \leq {q} \leq n$}\,\,\cdots (9). 
\end{cases}
\end{align*}
By (8) and (9), 
we have
\begin{align*}
-n&=\Dsum_{r=1}^{n}(\mu_{e_{n+2m+r}}-\mu_{e_{r}})=\Dsum_{r=1}^{b}(\mu_{e_{2(n+m)-b+r}}-\mu_{e_{r}})
+\Dsum_{r=b+1}^{n}(\mu_{e_{n+2m-b+r}}-\mu_{e_{r}})\\
&= -ab-(n-b)(a-1)%=-ab+ab-an-b+n
=-n-2m.
\end{align*}
%%%
But, this is a contradiction. 
Therefore, $\Dsum_{r=1}^{n+2m} \tau_{x_{r}} \in N \setminus \Image \widehat{\partial^{1}}$, 
\textcolor{red}{this proves the claim.}
\end{proof}
%%%%%%%%%%%%%%%%%%%%%%%%%%%%%%
Next, we compute a basis of $\HH^{2}(\nabla A)$. 
%%%%%%%%%%%%%%%%%%%%%%%%%%%%%%
%%%%%%%%%%%%%%%%%%%%%%%%%
%Note that, 
By Proposition \ref{par-IU} and Lemma \ref{par-IM}, 
\textcolor{red}{we have}
\begin{align*}
\Image \widehat{\partial^{2}} =
\begin{cases}
\Image \widehat{\partial^{2}}_{L_{1}} \oplus \Image \widehat{\partial^{2}}_{L_{2}} \oplus \Image \widehat{\partial^{2}}^{\star}_{L_{2}} 
& \text{if $1=n=m$,}\\
\Image \widehat{\partial^{2}}_{L_{1}} \oplus \Image \widehat{\partial^{2}}_{L_{2}} 
& \text{if $1=n < m$,} \\
\Image \widehat{\partial^{2}}_{L_{1}} 
& \text{if $1 < n < m$.} \\
\end{cases}
\end{align*}
%%%%%%%%%%%%%%%%%%%%%%%%%%%%%%
%%%%%%%%%%%%%%%%%%%%%%%%%%%%%%
\begin{prop}\label{bas-P2L1}
%%%%%%%
\begin{enumerate}
\item
\textcolor{red}{If} \textup{Case I} holds, 
\textcolor{red}{then} $\Image \widehat{\partial^{2}}_{L_{1}}$ has a ${\bbk}$-basis

$
\{
\tau^{y x^2}_{f_p} + \tau^{y^2 x}_{g_p},\,
\tau^{y x^2}_{f_q} + \tau^{y x^2}_{f_n+q},\,
\tau^{y x^2}_{f_m-n+r} + \tau^{y^2 x}_{g_r}
\mid 1 \leq p \leq n,\, 1 \leq q \leq m-n,\, 1 \leq r \leq n-1
\}.
$
%%%%%%%%%%%
\item
\textcolor{red}{If} \textup{Case I\hspace{-1.2pt}I} holds, 
\textcolor{red}{then} $\Image \widehat{\partial^{2}}_{L_{1}}$ has a ${\bbk}$-basis
\begin{align*}
\left\{\begin{array}{c|c}
\beta \tau^{y x^2}_{f_p} + \beta \tau^{y^2 x}_{g_p} + \alpha\tau^{yxy}_{f_p},\,
\beta \tau^{y x^2}_{f_q} + \beta \tau^{y x^2}_{f_{n+q}} + \alpha\tau^{yxy}_{f_{n+q}},&
1 \leq p \leq n,\, 1 \leq q \leq m-n,\,\\
\beta \tau^{y x^2}_{f_{m-n+r}} + \beta \tau^{y^2 x}_{g_r} + \alpha\tau^{yxy}_{f_{m-n+r}}&
1 \leq r \leq n
\end{array}\right\}.
\end{align*}
%%%%%%%%%%%
\end{enumerate}
%%%%%%%%%%%
Also, we obtain
$
\widehat{P^{2}}_{L_{1}}=
\begin{cases}
\Image \widehat{\partial^{2}}_{L_{1}}\oplus
\langle
\tau^{xyx}_{f_i}, \tau^{yxy}_{g_j}, \tau^{y^2 x}_{g_n}
| 1 \leq i \leq n, 1 \leq j \leq m
\rangle_{\bbk}
& \text{\textup{for Case I,}}\\
\Image \widehat{\partial^{2}}_{L_{1}}\oplus
\langle
\tau^{xyx}_{f_i}, \tau^{yxy}_{g_j}
| 1 \leq i \leq n, 1 \leq j \leq m
\rangle_{\bbk}
& \text{\textup{for Case I\hspace{-1.2pt}I.}}\\
\end{cases}
$
\end{prop}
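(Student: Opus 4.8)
The plan is to work throughout with the explicit matrix $L_{1}$. By Lemma~\ref{par-IM} together with Proposition~\ref{par-IU}, in the ordered bases $\{\tau_{x_{1}},\dots,\tau_{x_{n+2m}},\tau_{y_{1}},\dots,\tau_{y_{2n+m}}\}$ and $\rho_{2}$ the map $\widehat{\partial^{2}}_{L_{1}}\colon\widehat{P^{1}}_{L_{1}}\to\widehat{P^{2}}_{L_{1}}$ is given by $L_{1}$ (up to the zero rows that occur when $n=2$), so $\Im\widehat{\partial^{2}}_{L_{1}}$ is nothing but the column space of $L_{1}$, read off in the basis $\rho_{2}$; and by Lemma~\ref{rank-L1} this space has dimension $n+m-1$ in Case~I and $n+m$ in Case~II. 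It then remains to (a)~exhibit an explicit basis of that column space, (b)~prove the asserted linear independence and count, and (c)~produce the direct-sum complement inside $\widehat{P^{2}}_{L_{1}}$.

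For (a) I would recycle the row reduction~\eqref{row-op} used in the proof of Lemma~\ref{rank-L1}: there $L_{1}^{\mathrm{T}}$ is brought, by row operations only --- equivalently, by column operations on $L_{1}$, which leave the column space unchanged --- to the block form consisting of $\be C$ stacked over $\al E_{n+m}$ over zero blocks, with $C$ the circulant matrix of $f(x)=x^{m}+1$. Transposing back, the column space of $L_{1}$ is spanned by the $n+m$ vectors whose $\rho_{2}$-coordinates on the block $(\tau^{yx^{2}}_{f_{1}},\dots,\tau^{yx^{2}}_{f_{m}},\tau^{y^{2}x}_{g_{1}},\dots,\tau^{y^{2}x}_{g_{n}})$ form $\be$ times the $l$-th row of $C$ and on the remaining block $(\tau^{yxy}_{g_{1}},\dots,\tau^{yxy}_{g_{n}},\tau^{xyx}_{f_{1}},\dots,\tau^{xyx}_{f_{m}})$ equal $\al e_{l}$, for $1\le l\le n+m$. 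Since the $l$-th row of $C$ has its two nonzero entries in positions $l$ and $l+m$ reduced modulo $n+m$, matching those two positions against the above labelling of $\rho_{2}$ and handling the wrap-around produces precisely the three shapes $\tau^{yx^{2}}_{f_{p}}+\tau^{y^{2}x}_{g_{p}}$, $\tau^{yx^{2}}_{f_{q}}+\tau^{yx^{2}}_{f_{n+q}}$ and $\tau^{yx^{2}}_{f_{m-n+r}}+\tau^{y^{2}x}_{g_{r}}$, with the $\al e_{l}$-part supplying in Case~II the accompanying $\tau^{yxy}$, resp.\ $\tau^{xyx}$, summand and simply disappearing in Case~I. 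Alternatively one may compute $\widehat{\partial^{2}}(\tau_{x_{i}})$ and $\widehat{\partial^{2}}(\tau_{y_{j}})$ directly from the formula for $\partial^{2}$, exactly as in the proof of Lemma~\ref{par-IM}, rewriting each resulting path of $\nabla A$ modulo $f_{k}$, $g_{k}$ in the chosen basis of $s(h)\nabla A\,t(h)$; this also makes transparent that the $\tau_{y_{j}}$ add nothing outside the span of those vectors.

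Step (b): these $n+m$ vectors are exactly the rows of the reduced matrix, so their span has dimension $\rank L_{1}$. In Case~II this equals $n+m$, so all of them are linearly independent and already form the asserted basis. In Case~I --- where $\gcd(n,m)=1$ together with $n+m$ even forces $n$ and $m$ both odd, hence $\rank C=n+m-1$ --- there is exactly one dependence among them, namely the one coming from the unique left null vector $(1,-1,\dots,-1)$ of $C$ (it exists because $\gcd(x^{m}+1,x^{n+m}-1)=x+1$, whose root is $x=-1$). Its coefficient on the member $\tau^{yx^{2}}_{f_{m}}+\tau^{y^{2}x}_{g_{n}}$, i.e.\ the ``$r=n$'' vector, is $(-1)^{n+m-1}\ne0$, so discarding that single vector leaves a basis of size $n+(m-n)+(n-1)=n+m-1$. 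This proves (1) and (2).

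Step (c): since $\dim\widehat{P^{2}}_{L_{1}}=2(n+m)$ while $\dim\Im\widehat{\partial^{2}}_{L_{1}}$ is $n+m-1$ (Case~I) resp.\ $n+m$ (Case~II), the listed complementary family has exactly the cardinality of a complement, so it suffices to check that the sum is direct. Comparing the $\rho_{2}$-coordinates occurring in the basis of $\Im\widehat{\partial^{2}}_{L_{1}}$ found in (a) with those spanning the complement, the intersection can be nonzero only through the few shared coordinates, and in Case~I this reduces to $\tau^{y^{2}x}_{g_{n}}\notin\Im\widehat{\partial^{2}}_{L_{1}}$, which holds because the standard basis vector for that coordinate pairs nontrivially with the left null vector $(1,-1,\dots,-1)$ of $C$ and hence is not in the row space of $C$; the parallel computation handles Case~II, and a dimension count then gives the claimed direct-sum decomposition. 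The genuine obstacle throughout is the bookkeeping in step (a): pairing the two-term support $\{l,\ l+m \bmod (n+m)\}$ of a circulant row with the four families $\tau^{yx^{2}}_{f},\tau^{y^{2}x}_{g},\tau^{yxy}_{g},\tau^{xyx}_{f}$ that make up $\rho_{2}$, keeping the ranges $1\le q\le m-n$ and $1\le r\le n-1$ correct across the wrap-around, and separately handling the small size $n=2$ (where $\widehat{P^{2}}$ carries extra generators $\tau^{x^{1+m}}_{g_{j}}$ outside $\widehat{P^{2}}_{L_{1}}$); pinning down that Case~I has exactly one dependence and that it involves the ``$r=n$'' vector is the one place where the parity hypothesis and Lemma~\ref{rank-L1} really enter.
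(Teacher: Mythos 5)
Your reduction of the problem to the column space of $L_{1}$, together with the row operations already recorded in the proof of Lemma~\ref{rank-L1}, is legitimate, and steps (a)--(b) do give correct proofs of parts (1) and (2): the image is indeed spanned by the $n+m$ vectors whose coordinates on $(\tau^{yx^{2}}_{f_{\bullet}},\tau^{y^{2}x}_{g_{\bullet}})$ are $\be$ times the $l$-th row of the circulant $C$ and whose coordinates on $(\tau^{yxy}_{g_{\bullet}},\tau^{xyx}_{f_{\bullet}})$ are $\al e_{l}$, and your identification of the unique Case~I dependence with the alternating null vector of $C$ is right. This is a genuinely different route from the paper's: the paper defines a $\k$-linear endomorphism $\phi$ of $\widehat{P^{2}}$ carrying part of the standard basis to the candidate vectors and proves invertibility by computing $\det C_{n+m-1}$ (the principal $(n+m-1)$-minor of $C$) from the derivative of the characteristic polynomial $p(t)=(t-1)^{n+m}-1$, whereas you avoid that determinant entirely and, unlike the paper, make explicit why the candidate vectors actually lie in the image.

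The gap is in step (c), exactly at the sentence ``the parallel computation handles Case~II.'' It does not. Case~II contains the sub-case in which $n+m$ is even (so $n$ and $m$ are both odd and $C$ is singular) but $\al\neq 0$. There the alternating combination $\sum_{l=1}^{n+m}(-1)^{l-1}v_{l}$ of your $n+m$ spanning vectors has vanishing first block, because $(1,-1,\dots,-1)$ annihilates $C$, but nonzero second block $\al\,(1,-1,\dots,-1)$; that is,
$\al\bigl(\sum_{j=1}^{n}(-1)^{j-1}\tau^{yxy}_{g_{j}}+\sum_{i=1}^{m}(-1)^{n+i-1}\tau^{xyx}_{f_{i}}\bigr)$
is a nonzero element of $\Im\widehat{\partial^{2}}_{L_{1}}\cap\ang{\tau^{xyx}_{f_{i}},\tau^{yxy}_{g_{j}}}_{\k}$. (For $n=m=1$ this is visible on a single column: $\widehat{\partial^{2}}(\tau_{y_{2}})=\al(\tau^{yxy}_{g_{1}}-\tau^{xyx}_{f_{1}})$.) Hence the sum is not direct in that sub-case, and the dimension count you invoke --- which only shows the two subspaces have complementary dimensions --- cannot repair this. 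You need either to prove directness honestly in Case~II, which works only when $n+m$ is odd (then $C$ is invertible, so no nontrivial combination of the spanning vectors has vanishing first block), or to flag that for $n+m$ even and $\al\neq 0$ the displayed decomposition fails and the complement must be modified; note that the paper's own proof is written out only for Case~I, so nothing in the source resolves this sub-case for you.
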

%%%%%%%%%%%%%%%
%%%%%%%%%%%%%%%
\begin{proof}
%We only prove for \textup{Case I}.
%The proof for \textup{Case I\hspace{-1.2pt}I} is similar.
\textcolor{red}{We prove the statement only for \textup{Case I}; 
\textup{Case I\hspace{-1.2pt}I} is analogous.}
%%%%%%%%%%%%%%%%%%%%
%We define 
\textcolor{red}{The ${\bbk}$-linear endomorphism $\phi$ of $\widehat{P^{2}}$ is defined by}
%%%%%%%%%%%%%%%%%%%%
\begin{align*}
\begin{cases}
%%%%%%%%%%%%%%%%%%%%
\tau_{f_p}^{yx^2}
\mapsto
\tau^{y x^2}_{f_p} + \tau^{y^2 x}_{g_p}
&\text{ for } 1 \leq p \leq n,
\\%%%%%%%%%%%%%%%%%%%
\tau_{f_q}^{yx^2}
\mapsto
\tau^{y x^2}_{f_{q}} + \tau^{y x^2}_{f_{n+q}}
&\text{ for } 1 \leq q \leq m-n,
\\%%%%%%%%%%%%%%%%%%
\tau_{g_r}^{y^2 x}
\mapsto
\tau^{y x^2}_{f_{m-n+r}} + \tau^{y^2 x}_{g_{r}}
&\text{ for } 1 \leq r \leq n-1,
%%%%%%%%%%%%%%%%%%%%
\end{cases}
\quad
\begin{cases}
%%%%%%%%%%%%%%%%%%%%
\tau^{y^2 x}_{g_n} 
\mapsto
\tau^{y^2 x}_{g_n}, 
&
\\%%%%%%%%%%%%%%%%%%
\tau^{xyx}_{f_i}
\mapsto
\tau^{xyx}_{f_i}
&\text{ for } 1 \leq i \leq m,
\\%%%%%%%%%%%%%%%%%%%
\tau^{yxy}_{g_j}
\mapsto
\tau^{yxy}_{g_j}
&\text{ for } 1 \leq j \leq n.
\\%%%%%%%%%%%%%%%%%%%
\end{cases}
\end{align*}
%%%%%%%%%%%%%%%%%%
Then the matrix representation $M_{\phi}$ of $\phi$ 
with respect to the ordered basis
\begin{center}
$\{ \tau_{f_1}^{yx^2}, \ldots ,\tau_{f_m}^{yx^2}, 
\tau_{g_1}^{y^2 x},\ldots ,\tau_{g_n}^{y^2 x},$ 
$\tau_{g_1}^{yxy},\ldots ,\tau_{g_n}^{yxy},
\tau_{f_1}^{xyx}, \ldots ,\tau_{f_m}^{xyx} \} $
%}
\,\,\,is\,\,\,
$\begin{pNiceArray}{c:c}
\Block{1-1}<\large>{C_{n+m-1}} & \Block{1-1}<\large>{$0$} \\
\hdottedline
\Block{1-1}<\large>{$0$}& \Block{1-1}<\large>{E_{n+m+1}} \\
\end{pNiceArray},$
\end{center}
%where $E_{n+m+1}$ is $(n+m+1)$-th identity matrix 
%and $C_{n+m-1}$ is the principal submatrix of $C$ deleting 
%the last row and column where $C$ is the circulant matrix with associated polynomial $f(x)=x^{m}+1$. 
\textcolor{red}{where $C$ is the circulant matrix with associated polynomial $f(x)=x^{m}+1$, 
$E_{n+m+1}$ is the $(n+m+1)$-th identity matrix 
and $C_{n+m-1}$ is the principal submatrix of $C$ deleting the last row and column.}
\textcolor{magenta}{Hence,}
\textcolor{red}{by Lemma \ref{rank-C} \eqref{rank-C-equ2},}
\textcolor{magenta}{$\rank M_{\phi} = 2(n+m)$ and then $\phi$ is a ${\bbk}$-linear automorphism of $\widehat{P^{2}}$.}
By Lemma \ref{rank-L1}, 
\textcolor{red}{this proves the claim}. 
\end{proof}
%%%%%%%%%%%%%%%
\begin{prop}\label{bas-P2L2}
We set
$V:=\{
\tau_{f_{r}}^{x^{m+2}}-\lambda_{m+2-r}\tau_{g_{1}}^{yx^{m+1}}+\lambda_{m+1-r}\tau_{g_{1}}^{xyx^{m}}
\mid 1\leq r \leq m
\}$.
\begin{enumerate}
  \item \textcolor{red}{If} \textup{Case $1$} holds, 
   \textcolor{red}{then} $\widehat{P^{2}}_{L_{2}}$ has a ${\bbk}$-basis $V$.
%%%%%%
  \item \textcolor{red}{If} \textup{\textcolor{magenta}{Case $2$}} holds, 
  \textcolor{red}{then} $\widehat{P^{2}}_{L_{2}}$ has a ${\bbk}$-basis $V\cup \{-\alpha\tau^{yx^{m+1}}_{g_{1}}+2\tau^{xyx^m}_{g_{1}}\}$.
%%%%%%
  \item \textcolor{red}{If} \textup{\textcolor{magenta}{Case $3$}} holds, 
  \textcolor{red}{then} $\widehat{P^{2}}_{L_{2}}$ has a ${\bbk}$-basis $V \cup \{\tau^{yx^{m+1}}_{g_{1}}, \tau^{xyx^m}_{g_{1}}\}$.
\end{enumerate}
%%%%%
Also, we obtain 
$
\widehat{P^{2}}_{L_{2}}=
\begin{cases}
\Image \widehat{\partial^{2}}_{L_{2}}\oplus
\langle \tau^{yx^{m+1}}_{g_{1}}, \tau^{xyx^m}_{g_{1}}\rangle_{\bbk}
&\text{\textup{for Case $1$,}}\\
\Image \widehat{\partial^{2}}_{L_{2}}\oplus
\langle \tau^{xyx^m}_{g_{1}}\rangle_{\bbk}
&\text{\textup{for Case $2$,}}\\
\Image \widehat{\partial^{2}}_{L_{2}}
&\text{\textup{for Case $3$.}}\\
\end{cases}
$
\end{prop}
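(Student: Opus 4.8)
The plan is to read $\Im\widehat{\partial^{2}}_{L_{2}}$ directly off the matrix $L_{2}$. By Proposition~\ref{par-IU} and Lemma~\ref{par-IM}, the map $\widehat{\partial^{2}}_{L_{2}}$ is represented by $L_{2}$ with respect to the relevant ordered subsets of $\rho_{1}$ and $\rho_{2}$; so $\Im\widehat{\partial^{2}}_{L_{2}}$ is the column space of $L_{2}$, regarded as a subspace of $\widehat{P^{2}}_{L_{2}}$ equipped with the ordered basis $\tau^{x^{m+2}}_{f_{m}},\dots,\tau^{x^{m+2}}_{f_{1}},\tau^{yx^{m+1}}_{g_{1}},\tau^{xyx^{m}}_{g_{1}}$ inherited from $\rho_{2}$. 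The whole argument is then a column reduction of $L_{2}$, carried out with the recursion $\lambda_{r+2}=\al\lambda_{r+1}+\be\lambda_{r}$ and the initial value $\lambda_{0}=0$ (so in particular $\lambda_{2}=\al\lambda_{1}$).

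First I would treat the first $m$ columns of $L_{2}$. Their top $m\times m$ block is upper unitriangular, hence these columns are already linearly independent. By adding suitable $\al$- and $\be$-multiples of earlier columns to later ones and repeatedly invoking the recursion, one checks that the $j$-th column ($1\le j\le m$) becomes exactly the element of $V$ indexed by $r=m+1-j$. Since the $\tau^{x^{m+2}}_{f_{\bullet}}$-components of these $m$ vectors are the $m$ distinct coordinate vectors, $V$ is linearly independent; thus $V\subseteq\Im\widehat{\partial^{2}}_{L_{2}}$ in all three cases, and $\ang{V}_{\k}$ is precisely the span of the first $m$ columns of $L_{2}$.

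Next I would reduce the remaining two columns of $L_{2}$ modulo $\ang{V}_{\k}$. Subtracting the appropriate combinations of the elements of $V$ and using the recursion (notably $\lambda_{m+2}=\al\lambda_{m+1}+\be\lambda_{m}$), the $(m+1)$-st column collapses to $\lambda_{m+1}\bigl(-\al\tau^{yx^{m+1}}_{g_{1}}+2\tau^{xyx^{m}}_{g_{1}}\bigr)$ and the $(m+2)$-nd to $-\lambda_{m+1}\bigl(2\be\tau^{yx^{m+1}}_{g_{1}}+\al\tau^{xyx^{m}}_{g_{1}}\bigr)$. In Case~$1$, where $\lambda_{m+1}=0$, both vanish, so $\Im\widehat{\partial^{2}}_{L_{2}}=\ang{V}_{\k}$ and $\ang{\tau^{yx^{m+1}}_{g_{1}},\tau^{xyx^{m}}_{g_{1}}}_{\k}$ is a complement in the $(m+2)$-dimensional space $\widehat{P^{2}}_{L_{2}}$. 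If $\lambda_{m+1}\neq0$, the two collapsed vectors have, in the basis $\tau^{yx^{m+1}}_{g_{1}},\tau^{xyx^{m}}_{g_{1}}$, determinant $-(\al^{2}+4\be)$: in Case~$3$ this is nonzero, the two vectors span $\ang{\tau^{yx^{m+1}}_{g_{1}},\tau^{xyx^{m}}_{g_{1}}}_{\k}$, hence $\widehat{\partial^{2}}_{L_{2}}$ is surjective and $\Im\widehat{\partial^{2}}_{L_{2}}=\widehat{P^{2}}_{L_{2}}$; in Case~$2$, where $\al^{2}+4\be=0$ forces $\al\neq0$ (since $\be\neq0$), the two collapsed vectors are proportional, so $\Im\widehat{\partial^{2}}_{L_{2}}=\ang{V}_{\k}\oplus\ang{-\al\tau^{yx^{m+1}}_{g_{1}}+2\tau^{xyx^{m}}_{g_{1}}}_{\k}$ and $\ang{\tau^{xyx^{m}}_{g_{1}}}_{\k}$ is a complement. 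Collecting the three cases yields the claimed bases of $\Im\widehat{\partial^{2}}_{L_{2}}$ and the stated decompositions of $\widehat{P^{2}}_{L_{2}}$.

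The bulk of this is routine bookkeeping of column operations; the only genuinely structural point is that, after reduction, the last two columns become $\lambda_{m+1}$ times fixed two-dimensional vectors, so the trichotomy is dictated entirely by the discriminant $\al^{2}+4\be$ of the surviving $2\times2$ block. This is the image-side counterpart of the kernel computation for $L_{2}$ in \cite[Lemma~2.8]{IU}, so no new idea is needed; the small case $m=n=1$ is handled verbatim using the degenerate $3\times3$ form of $L_{2}$ recorded earlier.
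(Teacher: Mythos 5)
Your proof is correct and follows essentially the same route as the paper: the paper performs the identical Gaussian elimination (presented as row operations on $(L_{2})^{\mathrm T}$ rather than column operations on $L_{2}$), arrives at the same reduced form in which the first $m$ columns give exactly the elements of $V$ and the last two collapse to $\lambda_{m+1}(-\al,2)$ and $-\lambda_{m+1}(2\be,\al)$, and then concludes by the same trichotomy on $\lambda_{m+1}$ and $\al^{2}+4\be$ (citing the proof of \cite[Lemma 2.8]{IU} for that last step, which you instead carry out directly).
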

%%%
\begin{proof}
\textcolor{red}{We have} 
{\small
\begin{align*}
(L_{2})^{T} &=
\left(\begin{array}{ccccccc}
1 &&&&& -\lambda_{2} & \lambda_{1} \\
-\alpha& 1 &&&& -\beta \lambda_{1} & -\beta \lambda_{0} \\
-\beta & -\alpha&&&&& \\
& -\beta & \ddots &&&& \\
&&& 1 &&& \\
&&& -\alpha& 1 && \\
&&& -\beta & -\alpha& \beta \lambda_{m} & \lambda_{m+1} \\
&&&& -\beta & -\beta \lambda_{m+1} & -\lambda_{m+2} \\
\end{array}\right)
%%%%%%
\rightarrow 
\left(\begin{array}{ccccccc}
1 &&&&& -\lambda_{2} & \lambda_{1} \\
& 1 &&&& -\lambda_{3} & \lambda_{2} \\
-\beta & -\alpha&&&&& \\
& -\beta & \ddots &&&& \\
&&& 1 &&& \\
&&& -\alpha& 1 && \\
&&& -\beta & -\alpha& \beta \lambda_{m} & \lambda_{m+1} \\
&&&& -\beta & -\beta \lambda_{m+1} & -\lambda_{m+2} \\
\end{array}\right)\\
%%%%%
&\rightarrow
\dots
\rightarrow 
\left(\begin{array}{ccccccc}
1 &&&&& -\lambda_{2} & \lambda_{1} \\
& 1 &&&& -\lambda_{3} & \lambda_{2} \\
&& \ddots &&& \vdots & \vdots \\
&&& 1 && -\lambda_{m} & \lambda_{m-1} \\
&&&& 1 &  -\lambda_{m+1} & \lambda_{m} \\
&&& -\beta & -\alpha& \beta \lambda_{m} & \lambda_{m+1} \\
&&&& -\beta & -\beta \lambda_{m+1} & -\lambda_{m+2} \\
\end{array}\right)
%%%
%%%
\rightarrow 
\left(\begin{array}{ccccccc}
1 &&&&& -\lambda_{2} & \lambda_{1} \\
& 1 &&&& -\lambda_{3} & \lambda_{2} \\
&& \ddots &&& \vdots & \vdots \\
&&& 1 && -\lambda_{m} & \lambda_{m-1} \\
&&&& 1 &  -\lambda_{m+1} & \lambda_{m} \\
&&&&& -\alpha\lambda_{m+1} & 2\lambda_{m+1} \\
&&&&& -2\beta \lambda_{m+1} & -\alpha\lambda_{m+1} \\
\end{array}\right).
\end{align*}
}
Therefore, by the proof of \cite[Lemma 2.8]{IU}, Lemma \ref{par-IM} and Proposition \ref{par-IU}, \textcolor{red}{the claim follows}.  
\end{proof}
%%%%%%%%%%%%%%%
%%%%%%%%%%%%%%%%%%%%%%%%%%%%%%%%%%%%%%%%%%%%%%%%%%%%%
\begin{prop}\label{basHH-2}
%\begin{flushleft}
The ${\bbk}$-vector space $\HH^{2}(\nabla A)$ is  is \textcolor{red}{described as follows}. 
\textcolor{red}{A} basis of $\HH^{2}(\nabla A)$ \textcolor{red}{is given in} \textup{Table \ref{table-HH2}}\textup{:} 
%%%%%%%%%%%%%%%%%%%%
%\begin{enumerate}
\renewcommand{\arraystretch}{1.7}
\begin{longtable}{|c|c|c|c||c|}
\caption{List of basis of $\HH^{2}(\nabla A)$} \label{table-HH2}\\ \hline
$n$ & $m$ & \textup{Cond.$1$}& \textup{Cond.$2$} & {Basis of $\HH^{2}(\nabla A)$} \\ \hline
\endfirsthead
%%%%%%%%%%%%%%%%%%%%
$n=1$ & $m=1$ & \textup{Case I } & \textup{Case $1$} & 
$\left\{
\Big[\tau_{g_1}^{y^2x}\Big], \Big[\tau_{g_1}^{yxy}\Big], \Big[\tau_{f_1}^{xyx}\Big],
 \Big[\tau_{g_1}^{yx^2}\Big], \Big[\tau_{g_1}^{xyx}\Big], \Big[\tau_{f_1}^{y^2x}\Big],
 \Big[\tau_{f_1}^{yxy}\Big], \Big[\tau_{g_1}^{x^3}\Big], \Big[\tau_{f_1}^{y^3}\Big]
\right\}$ 
\\
&&&&\\[-20pt]
\hline
%%%%%%%%%%%%%%%%
$n=1$ & $m=1$ & \textup{Case I\hspace{-1.2pt}I } & \textup{Case $2$} & 
$\left\{
\Big[\tau_{g_1}^{yxy}\Big], \Big[\tau_{f_1}^{xyx}\Big], 
\Big[\tau_{g_1}^{xyx}\Big],
\Big[\tau_{f_1}^{yxy}\Big], 
\Big[\tau_{g_1}^{x^3}\Big], \Big[\tau_{f_1}^{y^3}\Big]
\right\}$
\\[2pt]\hline
%%%%%%%%%%%%%%%%%
$n=1$ & $m=1$ & \textup{Case I\hspace{-1.2pt}I } & \textup{Case $3$} & 
$\left\{
\Big[\tau_{g_1}^{yxy}\Big], \Big[\tau_{f_1}^{xyx}\Big], 
\Big[\tau_{g_1}^{x^3}\Big], \Big[\tau_{f_1}^{y^3}\Big]
\right\}$
\\[2pt]\hline
%%%%%%%%%%%%%%%%%%%%%
%%%%%%%%m>n=1%%%%%%%%
%%%%%%%%%%%%%%%%%%%%%
$n=1$ & $m=2$ & \textup{Case I\hspace{-1.2pt}I} & \textup{Case $1$}&
$\left\{
\Big[\tau_{f_1}^{xyx}\Big], \Big[\tau_{f_2}^{xyx}\Big], \Big[\tau_{g_1}^{yxy}\Big], 
\Big[\tau_{g_1}^{yx^3}\Big], \Big[\tau_{g_1}^{xyx^2}\Big], 
\Big[\tau_{g_1}^{x^5}\Big], \Big[\tau_{f_1}^{y^2}\Big], \Big[\tau_{f_2}^{y^2}\Big]
\right\}$
\\[2pt]\hline
%%%%%
$n=1$ & $m=2$ & \textup{Case I\hspace{-1.2pt}I} & \textup{Case $2$}&
$\left\{
\Big[\tau_{f_1}^{xyx}\Big], \Big[\tau_{f_2}^{xyx}\Big], \Big[\tau_{g_1}^{yxy}\Big], 
\Big[\tau_{g_1}^{xyx^2}\Big], 
\Big[\tau_{g_1}^{x^5}\Big], \Big[\tau_{f_1}^{y^2}\Big], \Big[\tau_{f_2}^{y^2}\Big]
\right\}$
\\[2pt]\hline
%%%%%
$n=1$ & $m=2$ & \textup{Case I\hspace{-1.2pt}I} & \textup{Case $3$}&
$\left\{
\Big[\tau_{f_1}^{xyx}\Big], \Big[\tau_{f_2}^{xyx}\Big], \Big[\tau_{g_1}^{yxy}\Big], 
\Big[\tau_{g_1}^{x^5}\Big], \Big[\tau_{f_1}^{y^2}\Big], \Big[\tau_{f_2}^{y^2}\Big]
\right\}$
\\[2pt]\hline
%%%%%
$n=1$ & $m \geq 3$ & \textup{Case I} & \textup{Case $1$}&
$\left\{
\Big[\tau^{xyx}_{f_i}\Big], \Big[\tau^{yxy}_{g_1}\Big], \Big[\tau^{y^2 x}_{g_1}\Big], 
\Big[\tau^{yx^{m+1}}_{g_{1}}\Big], \Big[\tau^{xyx^m}_{g_{1}}\Big], 
\Big[\tau^{x^{2m+1}}_{g_{1}}\Big]
\middle|\, 1 \leq i \leq m \right\}$
\\[2pt]\hline
%%%%%
$n=1$ & $m \geq 3$ & \textup{Case I\hspace{-1.2pt}I} & \textup{Case $1$}&
$\left\{
\Big[\tau^{xyx}_{f_i}\Big], \Big[\tau^{yxy}_{g_1}\Big], 
\Big[\tau^{yx^{m+1}}_{g_{1}}\Big], \Big[\tau^{xyx^m}_{g_{1}}\Big], 
\Big[\tau^{x^{2m+1}}_{g_{1}}\Big]
\middle|\, 1 \leq i \leq m \right\}$
\\[2pt]\hline
%%%%%
$n=1$ & $m \geq 3$ & \textup{Case I\hspace{-1.2pt}I} & \textup{Case $2$}&
$\left\{
\Big[\tau^{xyx}_{f_i}\Big], \Big[\tau^{yxy}_{g_1}\Big], 
\Big[\tau^{xyx^m}_{g_{1}}\Big], 
\Big[\tau^{x^{2m+1}}_{g_{1}}\Big]
\middle|\, 1 \leq i \leq m \right\}$
\\[2pt]\hline
%%%%%%
$n=1$ & $m \geq 3$ & \textup{Case I\hspace{-1.2pt}I} & \textup{Case $3$}&
$\left\{
\Big[\tau^{xyx}_{f_i}\Big], \Big[\tau^{yxy}_{g_1}\Big], 
\Big[\tau^{x^{2m+1}}_{g_{1}}\Big]
\middle|\, 1 \leq i \leq m \right\}$
\\[2pt]\hline
%%%%%%%%%%%%%%%
%%%%%m>n>1%%%%%
%%%%%%%%%%%%%%%
$n=2$ & $m > n$ & \textup{Case I} & \hrulefill &
$\left\{
\Big[\tau^{xyx}_{f_i}\Big], \Big[\tau^{yxy}_{g_j}\Big], \Big[\tau^{y^2 x}_{g_n}\Big], 
\Big[\tau_{g_{1}}^{x^{1+m}}\Big], \Big[\tau_{g_{2}}^{x^{1+m}}\Big]
\middle|\, 1 \leq i \leq m, 1 \leq j \leq n
\right\}$
\\[2pt]\hline
%%%%%%
$n=2$ & $m > n$ & \textup{Case I\hspace{-1.2pt}I} & \hrulefill &
$\left\{
\Big[\tau^{xyx}_{f_i}\Big], \Big[\tau^{yxy}_{g_j}\Big], 
\Big[\tau_{g_{1}}^{x^{1+m}}\Big], \Big[\tau_{g_{2}}^{x^{1+m}}\Big]
\middle|\, 1 \leq i \leq m, 1 \leq j \leq n
\right\}$
\\[2pt]\hline
%%%%%%
$n \geq 3$ & $m > n$ & \textup{Case I} & \hrulefill &
$\left\{
\Big[\tau^{xyx}_{f_i}\Big], \Big[\tau^{yxy}_{g_j}\Big], \Big[\tau^{y^2 x}_{g_n}\Big]
\middle|\, 1 \leq i \leq m, 1 \leq j \leq n
\right\}$ 
\\[2pt]
\hline
%%%%%%
$n \geq 3$ & $m > n$ & \textup{Case I\hspace{-1.2pt}I} & \hrulefill &
$\left\{
\Big[\tau^{xyx}_{f_i}\Big], \Big[\tau^{yxy}_{g_j}\Big]
\middle|\, 1 \leq i \leq m, 1 \leq j \leq n
\right\}$
\\[2pt]
\hline
%%%%%%%%%%%%%%%%
\end{longtable}
%\end{flushleft}
\renewcommand{\arraystretch}{1.0}
\end{prop}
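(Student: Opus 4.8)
The plan is to read off $\HH^2(\nabla A)$ as the cokernel of $\widehat{\partial^2}$. Since the complex \eqref{resol-2} has no term in cohomological degree $3$, one has $\HH^2(\nabla A) \cong \widehat{P^2}/\Im\widehat{\partial^2}$, so a basis of $\HH^2(\nabla A)$ is given by the residue classes of any $\k$-basis of a complement of $\Im\widehat{\partial^2}$ in $\widehat{P^2}$. First I would record the block decomposition of $\widehat{P^2}$ forced by the shape of $M_2$ computed in Proposition \ref{par-IU} and Lemma \ref{par-IM}: using the $\k$-bases of Lemma \ref{bas-IM} one writes $\widehat{P^2} = \widehat{P^2}_{L_1} \oplus \widehat{P^2}_{L_2} \oplus \widehat{P^2}^{\star}_{L_2}$ when $m=n=1$, $\widehat{P^2} = \widehat{P^2}_{L_1}\oplus\widehat{P^2}_{L_2}$ when $1=n<m$, and $\widehat{P^2} = \widehat{P^2}_{L_1}$ when $1<n<m$, and $\widehat{\partial^2}$ respects this decomposition, so that $\Im\widehat{\partial^2}$ equals $\Im\widehat{\partial^2}_{L_1}\oplus\Im\widehat{\partial^2}_{L_2}\oplus\Im\widehat{\partial^2}^{\star}_{L_2}$, $\Im\widehat{\partial^2}_{L_1}\oplus\Im\widehat{\partial^2}_{L_2}$, and $\Im\widehat{\partial^2}_{L_1}$ in the three respective ranges, as recorded just before Proposition \ref{bas-P2L1}. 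Hence the computation reduces to finding a complement of the image on each block, which is exactly what Propositions \ref{bas-P2L1} and \ref{bas-P2L2} supply; the $\widehat{P^2}^{\star}_{L_2}$-block carries the same matrix $L_2$ as the $\widehat{P^2}_{L_2}$-block, so Proposition \ref{bas-P2L2} applies to it a second time, while for $n=2$ the analogous ``$L_2$-block'' $\ang{\tau_{g_1}^{x^{1+m}},\tau_{g_2}^{x^{1+m}}}_{\k}$ lies entirely in the zero rows of $M_2$ by Lemma \ref{par-IM}(2) and therefore survives untouched to $\HH^2(\nabla A)$.

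Next I would assemble the answer block by block. From Proposition \ref{bas-P2L1}, a complement of $\Im\widehat{\partial^2}_{L_1}$ in $\widehat{P^2}_{L_1}$ is spanned by the classes of $\tau_{f_i}^{xyx}$ and $\tau_{g_j}^{yxy}$, together with $\tau_{g_n}^{y^2 x}$ in Case I and without it in Case I\hspace{-1.2pt}I; from Proposition \ref{bas-P2L2}, a complement of $\Im\widehat{\partial^2}_{L_2}$ in $\widehat{P^2}_{L_2}$ is $\ang{\tau_{g_1}^{yx^{m+1}},\tau_{g_1}^{xyx^m}}_{\k}$ in Case $1$, $\ang{\tau_{g_1}^{xyx^m}}_{\k}$ in Case $2$, and $0$ in Case $3$, plus the free generator $\tau_{g_1}^{x^{2m+1}}$ coming from the zero row below $L_2$ in every case. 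Passing to residue classes and running through the ranges $1<n<m$, $n=2<m$, $n=1<m$ and $m=n=1$ (the last two further subdivided by Condition $2$) recovers exactly the lists displayed in Table \ref{table-HH2}. The two remaining small features are handled by the same bookkeeping: for $m=n=1$ the matrix $M_2$ carries two copies of $L_2$ separated by zero rows, so Proposition \ref{bas-P2L2} is used twice and both interspersed free generators are added; and for $n=1,m=2$ there are extra zero rows below $L_2$ carrying $\tau_{f_1}^{y^2}$ and $\tau_{f_2}^{y^2}$, whose classes enter the basis as well.

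Finally I would check that the cardinality of each proposed basis agrees with $\dim_{\k}\HH^2(\nabla A)$ given in Theorem \ref{HHdim-IM} for $m\geq n>1$, and with the cited theorems of Belmans and of the first author and Ueyama for $n=1$; this simultaneously shows that no relation among the chosen classes has been overlooked and that these classes are linearly independent in the quotient, completing the proof. The main obstacle I anticipate is organizational rather than computational: the blocks $L_1$ and $L_2$ degenerate at the boundary of the parameter ranges ($L_1$ collapses when $m=n=1$, and the zero part below $L_2$ grows when $m=2$), so one must verify that the complement generators exhibited in Propositions \ref{bas-P2L1} and \ref{bas-P2L2} remain independent of $\al$ and $\be$ in those degenerate situations, so that the Case I/I\hspace{-1.2pt}I and Case $1$/$2$/$3$ dichotomies combine precisely as claimed throughout Table \ref{table-HH2}.
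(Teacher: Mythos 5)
Your proposal is correct and follows essentially the same route as the paper: the paper's own proof simply cites Lemmas \ref{bas-IU} and \ref{bas-IM} together with Propositions \ref{bas-P2L1} and \ref{bas-P2L2}, i.e.\ it identifies $\HH^{2}(\nabla A)$ with $\widehat{P^{2}}/\Im\widehat{\partial^{2}}$ and reads off the complements of the images blockwise exactly as you do. Your additional bookkeeping (the block decomposition of $M_{2}$, the zero rows contributing free generators, and the dimension cross-check against Theorem \ref{HHdim-IM}) just makes explicit what the paper leaves implicit.
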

%%%%%%%%%%%%%%%%%%%%%%%%%%%%%%%

\begin{proof}
By Lemmas \ref{bas-IU}, \ref{bas-IM}, Propositions \ref{bas-P2L1} \textcolor{red}{and} \ref{bas-P2L2}, 
\textcolor{red}{hence the claim follows.}
\end{proof}
%%%%%%%%%%%%%%%%%%%%%%%%%%%%%%%%%%%%%%%%%%%%%%%%%%%%%
%%%%%%%%%%%%%%%%%%%%%%%%%%%%%%%%%%%%%%%%%%%%%%%%%%%%%
\section{Hochschild cohomology rings}
\label{sec-5}
%%%%%%%%%%%%%%%%%%%%%%%%%%%%%%%%%%%%%
%Finally, in this section, we will compute the ring structure of $\bigoplus_{r \geq 0} \HH^{r}(\nabla A)$ with the Yoneda product. 
%\textcolor{blue}{We use the same notation as in Section $\ref{sec-basis}$.}
\textcolor{red}{In this section, we compute the ring structure of $\bigoplus_{r \geq 0} \HH^{r}(\nabla A)$ with respect to the Yoneda product,}
\textcolor{blue}{using the notation introduced in Section $\ref{sec-basis}$.}
%%%%%%%%%%%%%%%%%%%%%%%%%%%%%%%%%%%%%%%%%%%%%%%%%%%
First, we define $(\nabla A)^{\mathrm{e}}$-homomorphisms $\sigma^{p}_{0}: P^{1} \rightarrow P^{0}$ and $\sigma^{q}_{1}: P^{2} \rightarrow P^{1}$ for $p, q \in \{1, 2, 3, 4, 5, 3', 4', 5'\}$. 
%%%%%%%%%%%%%%%%%
\begin{enumerate}
%%%%%%%%%%%%%%%%%
\item
\textcolor{red}{Assume $m \geq n \geq 1$. 
Let
$h^{1}:=\Dsum_{r=1}^{n+2m} \tau_{x_{r}}$. 
For $a \in \cG_{1}$, define }
%%%%%%%%%%%%%%%%
\begin{align*}
&\sigma^{1}_{0}(s(a)\otimes t(a)):=
\begin{cases}
(s(e_{i})\otimes t(e_{i}))x_{i} &\text{if}\, a=x_{i} \; \text{for}\, 1 \leq i \leq n+2m ,\\
0 & \text{otherwise}\textcolor{red}{.}\\
\end{cases}
\end{align*}
%\\
%%%%%%%%%%%%%
%%%%%%%%%%%%%%%%%%
\textcolor{red}{For }$a \in \cG_{2}$, \textcolor{red}{define}
%%%%%%%%%%%%%%%%%%
%%%%%%%%%%%%%%%%%%
\begin{align*}
&\sigma^{1}_{1}(s(a)\otimes t(a)):=
\begin{cases}
(s(x_{i})\otimes t(x_{i}))x_{i+n}y_{i+2n}
-\alpha(s(x_{i})\otimes t(x_{i}))y_{i+n}x_{i+n+m}\\
\quad
-\alpha x_{i}(s(y_{i+n})\otimes t(y_{i+n}))x_{i+n+m}
-2\beta(s(y_{i})\otimes t(y_{i}))x_{i+m}x_{i+n+m}\\
\quad
-\beta y_{i}(s(x_{i+m})\otimes t(x_{i+m}))x_{i+n+m}
 &\hspace{-100pt}\text{if}\, a=f_{i}\; \text{for}\, 1 \leq i \leq m ,\\
-\alpha(s(y_{j})\otimes t(y_{j}))x_{j+m}y_{j+n+m}
-\beta(s(y_{j})\otimes t(y_{j}))y_{j+m}x_{j+2m}\\
\quad
-\beta y_{j}(s(y_{j+m})\otimes t(y_{j+m}))x_{j+2m}
 &\hspace{-100pt}\text{if}\, a=g_{j} \; \text{for}\, 1 \leq i \leq n.\\
\end{cases}
%%%%%%%%%%%%%%%
\end{align*}
%%%%%%%%%%%%%%%%
Moreover, 
\textcolor{red}{assume that \textup{Case I} holds. 
Let 
$h^{2}:=\Dsum_{r=1}^{m} (-1)^{r-1}\tau_{x_{n+r}}$. 
For $a \in \cG_{1}$, }
\text{\textcolor{red}{define}}\\
%%%%%%%%%%%%%%%
\begin{align*}
%%%%%%%%%%%%%%%
&\sigma^{2}_{0}(s(a)\otimes t(a)):=
\begin{cases}
(-1)^{i-n}(s(e_{i})\otimes t(e_{i}))x_{i} &\text{if}\, a=x_{i}\; \text{for}\, n+1 \leq i \leq n+m ,\\
0 & \text{otherwise}\textcolor{red}{.}\\
\end{cases}
\end{align*}
%%%%%%%%%%%%%%%%
\textcolor{red}{For }$a \in \cG_{2}$, \textcolor{red}{define}
\begin{align*}
%%%%%%%%%%%%%%%%
&\sigma^{2}_{1}(s(a)\otimes t(a)):=
\begin{cases}
(-1)^{i}((s(x_{i})\otimes t(x_{i}))x_{i+n}y_{i+2n}\\
\quad
-\beta(s(y_{i})\otimes t(y_{i}))x_{i+m}x_{i+n+m})
 &\text{if}\, a=f_{i}\; \text{for}\, 1 \leq i \leq n ,\\
(-1)^{i}(s(x_{i})\otimes t(x_{i}))x_{i+n}y_{i+2n}
 &\text{if}\, a=f_{i}\; \text{for}\, n+1 \leq i \leq m ,\\
0  &\text{if}\, a=g_{j} \; \text{for}\, 1 \leq i \leq n.\\
\end{cases}
\end{align*}
%%%%%%%%%%%%%%%%%%
\item
\begin{enumerate}
\item
\textcolor{red}{Assume that $m \geq n =1$ and Case $1$ holds. 
Let $h^{3}:=\Dsum_{r=2}^{m+2} \lambda_{r-1}\tau^{x^{m}}_{y_{r}}$, 
$h^{4}:=\beta \Dsum_{r=1}^{m+2} \lambda_{r-2}\tau^{x^{m}}_{y_{r}}$. 
For $a \in \cG_{1}$, define}
%%%%%%%%%%%%%%%%%%
\begin{align*}
&\sigma^{3}_{0}(s(a)\otimes t(a)):=
\begin{cases}
\lambda_{j-1}(s(e_{j})\otimes t(e_{j}))x_{j}x_{j+1}\cdots x_{j+m-1} &\text{if}\, a=y_{j} \; \text{for}\, 1 \leq j \leq 2n+m,\\
0 & \text{otherwise};\\
\end{cases}
\end{align*}
%%%%%%%%%%%%%%
\begin{align*}
&\sigma^{4}_{0}(s(a)\otimes t(a)):=
\begin{cases}
\beta\lambda_{j-2}(s(e_{j})\otimes t(e_{j}))x_{j}x_{j+1}\cdots x_{j+m-1} &\text{if}\, a=y_{j} \; \text{for}\, 1 \leq j \leq 2n+m ,\\
0 & \text{otherwise}\textcolor{red}{.}\\
\end{cases}
\end{align*}
%%%%%%%%%%%%%%%%%%%
\textcolor{red}{For }$a \in \cG_{2}$, \textcolor{red}{define}
%%%%%%%%%%%%%%%%%%%
\begin{align*}
&\sigma^{3}_{1}(s(a)\otimes t(a)):=
\begin{cases}
\beta \lambda_{i-1} (s(x_{i})\otimes t(x_{i}))x_{i+1} x_{i+2} \cdots x_{i+m+1}\\
\quad
+\lambda_{i+1} x_{i}(s(x_{i+1})\otimes t(x_{i+1})) x_{i+2} \cdots x_{i+m+1}
 &\text{if}\, a=f_{i}\, \text{for}\, 1 \leq i \leq m ,\\
(s(x_{1})\otimes t(x_{1}))x_{2} \cdots x_{m+1} y_{m+2}\\
\quad
-\beta \lambda_{m} (s(y_{1})\otimes t(y_{1}))x_{m+1} \cdots x_{2m} x_{2m+1}
 & \text{if}\, a=g_{1};\\
\end{cases}
\end{align*}
%%%%%%%%%%%%%%%%%
\begin{align*}
&\sigma^{4}_{1}(s(a)\otimes t(a)):=
\begin{cases}
\beta^{2} \lambda_{i-2} (s(x_{i})\otimes t(x_{i}))x_{i+1} x_{i+2} \cdots x_{i+m+1}\\
\quad
+\beta \lambda_{i} x_{i}(s(x_{i+1})\otimes t(x_{i+1})) x_{i+2} \cdots x_{i+m+1}
 &\text{if}\, a=f_{i}\; \text{for}\, 1 \leq i \leq m ,\\
-\alpha\beta\lambda_{m}y_{1}(s(x_{m+1})\otimes t(x_{m+1})) x_{m+2} \cdots x_{2m+1}\\
\quad
+\beta\lambda_{m}(s(x_{1})\otimes t(x_{1}))y_{2} x_{m+2} \cdots x_{2m+1}\\
\quad
+\beta\lambda_{m}x_{1}(s(y_{2})\otimes t(y_{2})) x_{m+2} \cdots x_{2m+1}
 & \text{if}\, a=g_{1}.\\
\end{cases}
\end{align*}
%%%%%%%%%%%%%%%%%%%
\item 
\textcolor{red}{Assume that $m \geq n=1$ and Case $2$ holds.  
Let $h^{5}:=\Dsum_{r=1}^{m+2} 
\left(\frac{\alpha}{2}\right)^{r-1}\tau^{x^{m}}_{y_{r}}$. 
For $a \in \cG_{1}$, define}
%%%%%%%%%%%%%%%%%%%
\begin{align*}
&\sigma^{5}_{0}(s(a)\otimes t(a)):=
\begin{cases}
(\frac{\alpha}{2})^{j-1}(s(e_{j})\otimes t(e_{j}))x_{j}x_{j+1}\cdots x_{j+m-1} &\text{if}\, a=y_{j} \; \text{for}\, 1 \leq j \leq 2n+m ,\\
0 & \text{otherwise}\textcolor{red}{.}\\
\end{cases}
\end{align*}
%%%%%%%%%%%%%%%%%%%
\textcolor{red}{For }$a \in \cG_{2}$, \textcolor{red}{define}
%%%%%%%%%%%%%%%%%%%
%%%%%%%%%%%%%%%%%%
\begin{align*}
&\sigma^{5}_{1}(s(a)\otimes t(a)):=
\begin{cases}
-(\frac{\alpha}{2})^{i+1} (s(x_{i})\otimes t(x_{i}))x_{i+1} x_{i+2} \cdots x_{i+m+1}\\
\quad
+(\frac{\alpha}{2})^{i+1}x_{i}(s(x_{i+1})\otimes t(x_{i+1})) x_{i+2} \cdots x_{i+m+1}
 &\text{if}\, a=f_{i}\; \text{for}\, 1 \leq i \leq m ,\\
(\frac{\alpha}{2})(s(x_{1})\otimes t(x_{1}))x_{2} \cdots x_{m+1} y_{m+2}\\
\quad
+(\frac{\alpha}{2})^{m+1}(s(x_{1})\otimes t(x_{1}))y_{2} x_{m+2} \cdots x_{2m+1}\\
\quad
-(\frac{\alpha}{2})^{m+2}(s(y_{1})\otimes t(y_{1}))x_{m+1} \cdots x_{2m+1}\\
\quad
+(\frac{\alpha}{2})^{m+1}x_{1}(s(y_{2})\otimes t(y_{2}))x_{m+2} \cdots x_{2m+1}\\
\quad
-2(\frac{\alpha}{2})^{m+2}y_{1}(s(x_{m+1})\otimes t(x_{m+1}))x_{m+2} \cdots x_{2m+1}
 & \text{if}\, a=g_{1}.\\
\end{cases}
\end{align*}
%%%%%%%%%%%%%%%%%%%
\end{enumerate}
%%%%%%%%%%%%%%%%%%
\item
\begin{enumerate}
\item
\textcolor{red}{Assume that $m=n=1$ and Case $1$ holds. 
Let $h^{3'}:=\tau_{x_2}^{y}$, 
$h^{4'}:=\beta \tau_{x_{3}}^{y} + \tau_{x_{1}}^{y}$. 
For $a \in \cG_{1}$, define}
%%%%%%%%%%%%%%%%%%
\begin{align*}
&\sigma^{3'}_{0}(s(a)\otimes t(a)):=
\begin{cases}
(s(e_{2}) \otimes t(e_{2}))y_{2} & \text{if}\, a=x_{2},\\
0 & \text{otherwise};\\
\end{cases}
\end{align*}
%%%%%%%%%%%%%%%%%%
\begin{align*}
&\sigma^{4'}_{0}(s(a)\otimes t(a)):=
\begin{cases}
\beta(s(e_{i}) \otimes t(e_{i}))y_{1} & \text{if}\, a=x_{1},\\
(s(e_{i}) \otimes t(e_{i}))y_{3} & \text{if}\, a=x_{3},\\
0 & \text{otherwise}\textcolor{red}{.}\\
\end{cases}
\end{align*}
%%%%%%%%%%%%%%%%%%
\textcolor{red}{For }$a \in \cG_{2}$, \textcolor{red}{define}
%%%%%%%%%%%%%%%%%%
\begin{align*}
&\sigma^{3'}_{1}(s(a)\otimes t(a)):=
\begin{cases}
(s(x_{1})\otimes t(x_{1}))y_{2}y_{3}
-\beta(s(y_{1})\otimes t(y_{1}))y_{2}x_{3} &\text{if}\, a=f_{1},\\
0 & \text{if}\, a=g_{1};\\
\end{cases}
\end{align*}
%%%%%%%%%%%%%%%%%%
\begin{align*}
&\sigma^{4'}_{1}(s(a)\otimes t(a)):=
\begin{cases}
-\beta(y_{1}(s(x_{2})\otimes t(x_{2}))y_{3}
+(s(y_{1})\otimes t(y_{1}))x_{2}y_{3}) &\text{if}\, a=f_{1},\\
-\beta(y_{1}(s(y_{2})\otimes t(y_{2}))y_{3}
+(s(y_{1})\otimes t(y_{1}))y_{2}y_{3}) & \text{if}\, a=g_{1}.\\
\end{cases}\\
\end{align*}
%%%%%%%%%%%%%%%%%%%
\item
\textcolor{red}{Assume that $m=n=1$, and Case $2$ holds. 
Let $h^{5'}:= \tau_{x_{3}}^{y} + \left(\frac{\alpha}{2}\right) \tau_{x_{2}}^{y} + \left(\frac{\alpha}{2}\right)^{2} \tau_{x_{1}}^{y}$. 
For $a \in \cG_{1}$, define}
%%%%%%%%%%%%%%%%%%
\begin{align*}
&\sigma^{5'}_{0}(s(a)\otimes t(a)):=
\begin{cases}
(\frac{\alpha}{2})^{3-j}(s(e_{j})\otimes t(e_{j}))y_{j} &\text{if}\, a=x_{j} \; \text{for}\, 1 \leq j \leq 3 ,\\
0 & \text{otherwise}\textcolor{red}{.}\\
\end{cases}
\end{align*}
%%%%%%%%%%%%%%%%%%
\textcolor{red}{For }$a \in \cG_{2}$, \textcolor{red}{define}
%%%%%%%%%%%%%%%%%%
\begin{align*}
&\sigma^{5'}_{1}(s(a)\otimes t(a)):=
\begin{cases}
\left(\frac{\alpha}{2}\right)^{2}y_{1}(s(x_{2})\otimes t(x_{2}))y_{3}
+\left(\frac{\alpha}{2}\right)^{2}(s(y_{1})\otimes t(y_{1}))x_{2}y_{3}\\
\quad
+\left(\frac{\alpha}{2}\right)^{3}(s(y_{1})\otimes t(y_{1}))y_{2}x_{3}
-\left(\frac{\alpha}{2}\right)(s(x_{1})\otimes t(x_{1}))y_{2}y_{3}\\
\quad
-2\left(\frac{\alpha}{2}\right)x_{1}(s(y_{2})\otimes t(y_{2}))y_{3}
&\text{if}\, a=f_{1},\\
-\left(\frac{\alpha}{2}\right)^{2}(s(y_{1})\otimes t(y_{1}))y_{2}y_{3}
+\left(\frac{\alpha}{2}\right)^{2}y_{1}(s(y_{2})\otimes t(y_{2}))y_{3} & \text{if}\, a=g_{1}.\\
\end{cases}\\
\end{align*}
%%%%%%%%%%%%%%%%%%
\end{enumerate}
\end{enumerate}
%%%%%%%%%%%%%%%%%%
\textcolor{red}{We also} recall a useful lemma as follows\textup{:}
%%%%%%%%%%%%%%%%%%

\begin{lem}[{\cite[Lemma 2.3]{IU}}]\label{IU-Lem2.3}
%The following \textcolor{red}{equality} holds in a down-up algebra $A=A(\alpha, \beta)$ with $\beta \neq 0$\textup{;}
%$
%x^{i}y=\beta\lambda_{i-1} yx^{i}+\lambda_{i}xyx^{i-1}
%\text{ for all $i \geq 1$.}
%$
%\]
\textcolor{red}{In a down-up algebra $A=A(\alpha, \beta)$ with $\beta \neq 0$, 
the following equality holds for all $i \geq 1$\rm{:} 
$
x^{i}y=\beta\lambda_{i-1} yx^{i}+\lambda_{i}xyx^{i-1}
$.}
\end{lem}

%%%%%%%%%%%%%%%%%%%%%%%%%%%%%%%%%%%%
%%%%%%%%%%%%%%%%%%

To compute the Yoneda product on $\bigoplus_{r \geq 0}\HH^{r}(\nabla A)$, 
we make sure that the above maps form a chain map, respectively. 

%%%%%%%%%%%%%%%%%%

\begin{prop} \label{ring-1}
Let $A=A(\alpha, \beta)$ be a down-up algebra 
with weights $\deg x = n$, $\deg y = m$ and $\beta \neq 0$ where $\gcd(n, m)=1$ and $m \geq n \geq 1$, 
and $\nabla A$ \textcolor{red}{its} Beilinson algebra.
Then the homomorphisms defined as above give the following commutative diagrams for $q \in 
\{1,\,2,\,3,\,4,\,5,\,3',\,4',\,5'\}$\textup{;} 

\hspace{120pt}
\xymatrix{
\cdots \ar[r] & P^{2} \ar[r]^{\partial^{2}} \ar[d]^{\sigma^{q}_{1}} & P^{1} \ar[r]^{h^{q}} \ar[d]^{\sigma^{q}_{0}} & \nabla A \ar[r] \ar[d]^{id_{\nabla A}} & 0 \\
\cdots \ar[r] & P^{1} \ar[r]^{(-1)\partial^{1}} & P_{0} \ar[r]^{\partial^{0}} & \nabla A \ar[r] & 0.\\}
\end{prop}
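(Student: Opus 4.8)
The plan is to verify, for each index $q\in\{1,2,3,4,5,3',4',5'\}$, the two commutativity conditions displayed in the diagram: that $\partial^{0}\circ\si^{q}_{0}=h^{q}$ as maps $P^{1}\to\nabla A$, and that $\si^{q}_{0}\circ\partial^{2}=(-1)\,\partial^{1}\circ\si^{q}_{1}$ as maps $P^{2}\to P^{1}$. By Propositions~\ref{basHH-1} and~\ref{basHH-2} each $h^{q}$ is a cocycle representing a class in $\HH^{1}(\nabla A)$, i.e. $\widehat{\partial^{2}}(h^{q})=h^{q}\circ\partial^{2}=0$, so the top row $\cdots\to P^{2}\xrightarrow{\partial^{2}}P^{1}\xrightarrow{h^{q}}\nabla A\to 0$ is a complex and a chain-map lift into \eqref{resol-1} exists by projectivity of the $P^{r}$; the point of the proposition is that the explicit formulas above furnish one, with the sign $(-1)=(-1)^{1}$ on the bottom differential dictated by the degree $1$ of $h^{q}$ as in the definition of the Yoneda product. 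Both identities are $(\nabla A)^{\mathrm e}$-linear, hence it suffices to check them on the module generators $s(h)\otimes t(h)$ for $h\in\cG^{1}$, respectively $h\in\cG^{2}$.

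For the right-hand square, fix $h\in\cG^{1}$. Then $\partial^{0}\bigl(\si^{q}_{0}(s(h)\otimes t(h))\bigr)$ is obtained by applying the multiplication map to the explicit expression defining $\si^{q}_{0}$, and this is immediately seen to equal $h^{q}(s(h)\otimes t(h))$, which by the description of $h^{q}$ in Proposition~\ref{basHH-1} is the corresponding linear combination of the paths $x_{i}$ (resp.\ $y_{j}$, $x_{j}\cdots x_{j+m-1}$, or $y_{i}$). So this square is just an unwinding of the definitions.

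For the middle square, fix $h=f_{i}$ or $h=g_{j}$. Expanding $\partial^{2}(s(h)\otimes t(h))$ by its defining formula (a sum of nine terms) and then applying $\si^{q}_{0}$ annihilates all summands except the few whose distinguished tensor factor is an arrow on which $\si^{q}_{0}$ is nonzero, leaving a short element of $P^{1}$. On the other side, $(-1)\,\partial^{1}\bigl(\si^{q}_{1}(s(h)\otimes t(h))\bigr)$ is computed by replacing each occurrence of a generator $s(b)\otimes t(b)$ appearing in $\si^{q}_{1}(s(h)\otimes t(h))$ by $(s(b)\otimes s(b))\,b-b\,(t(b)\otimes t(b))$ and multiplying out. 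Matching the two expressions is where the structure of $\nabla A$ enters: one uses the defining relations $f_{i}=0$, $g_{j}=0$ of $\nabla A$ to rewrite the path coefficients, and, in the cases $n=1$, the commutation identity of Lemma~\ref{IU-Lem2.3} together with its Beilinson-algebra analogue $x_{i}\cdots x_{i+j-1}y_{i+j}=\be\la_{j-1}y_{j}x_{i+m}\cdots x_{i+m+j-1}+\la_{j}x_{i}y_{i+1}x_{i+m+1}\cdots x_{i+m+j-1}$ to move the $y$'s past strings of $x$'s; the resulting $\la$-coefficients then telescope by the recursion $\la_{r+2}=\al\la_{r+1}+\be\la_{r}$.

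I would organise the verification according to the ranges in which the $\si^{q}$ are defined: $q=1$ (all $m\ge n\ge 1$), $q=2$ (Case~I), $q\in\{3,4\}$ ($n=1$, Case~$1$), $q=5$ ($n=1$, Case~$2$), $q\in\{3',4'\}$ ($m=n=1$, Case~$1$) and $q=5'$ ($m=n=1$, Case~$2$). The case $q=1$ is immediate from the formulas. The case $q=2$ requires only matching the signs $(-1)^{i}$ in the definition of $\si^{2}_{1}$ against the sign $(-1)$ on the bottom row. The main obstacle is the family $q\in\{3,4,5\}$: there $\si^{q}_{1}(s(f_{i})\otimes t(f_{i}))$ and $\si^{q}_{1}(s(g_{1})\otimes t(g_{1}))$ involve long products of $x$'s carrying $\la$-coefficients, and reproducing $\si^{q}_{0}\circ\partial^{2}$ from $(-1)\,\partial^{1}\circ\si^{q}_{1}$ forces repeated application of the commutation identity and repeated collapses of sums of the form $\be\la_{r-1}+\al\la_{r}=\la_{r+1}$; this bookkeeping is delicate but entirely mechanical once the identity is in hand. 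The degenerate cases $q\in\{3',4',5'\}$ at $m=n=1$ are treated identically, using the center-entry conventions for $L_{2}$ recorded above.
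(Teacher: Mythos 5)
Your proposal is correct and follows essentially the same route as the paper: both verify the two commuting squares directly on the $(\nabla A)^{\mathrm e}$-module generators $s(h)\otimes t(h)$, the paper carrying out only the case $q=1$ explicitly and declaring the rest similar, while you additionally identify the tools (Lemma~\ref{IU-Lem2.3}, its Beilinson-algebra analogue, and the recursion for $\lambda$) needed for the harder cases $q\in\{3,4,5\}$. The only slip is cosmetic: $\si^{q}_{0}\circ\partial^{2}$ lands in $P^{0}$, not $P^{1}$.
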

%%%%%%%%%%%%%%%%%

\begin{proof}
%We only prove for $q=1$. The other cases are similar. 
\textcolor{red}{We prove only the case $q = 1$; the other cases are analogous.}

For $a \in \cG_{1}$, \textcolor{red}{we have}
%%%%%%%%%%%%%%%%%
\begin{align*}
\partial^{0}\sigma^{1}_{0}(s(a)\otimes t(a))&=
\begin{cases}
\partial^{0}(s(e_{i})\otimes t(e_{i}))x_{i} &\text{if}\, a=x_{i} \, \text{for}\, 1 \leq i \leq n+2m ,\\
0 & \text{otherwise};\\
\end{cases}\\
&=
\begin{cases}
x_{i} &\text{if}\, a=x_{i} \, \text{for}\, 1 \leq i \leq n+2m ,\\
0 & \text{otherwise.}\\
\end{cases}
=h^{1}(s(a) \otimes t(a)).
\end{align*}
Therefore, $\partial^{0}\sigma^{1}_{0}=h^{1}$ \textcolor{red}{holds}.
%%%%%%%%%%%%%%%%%%

For $a \in \cG_{2}$, \textcolor{red}{we have}
%%%%%%%%%%%%%%%%%%
\begin{align*}
(-1)\partial^{1}\sigma^{1}_{1}(s(a) \otimes t(a))
&=
\begin{cases}
(-1)\partial^{1}(
(s(x_{i})\otimes t(x_{i}))x_{i+n}y_{i+2n}
-\alpha(s(x_{i})\otimes t(x_{i}))y_{i+n}x_{i+n+m}\\
\quad
-\alpha x_{i}(s(y_{i+n})\otimes t(y_{i+n}))x_{i+n+m}
-2\beta(s(y_{i})\otimes t(y_{i}))x_{i+m}x_{i+n+m}\\
\quad
-\beta y_{i}(s(x_{i+m})\otimes t(x_{i+m}))x_{i+n+m})
 &\hspace{-120pt}\text{if}\, a=f_{i}\, \text{ for}\, 1 \leq i \leq m ,\\
(-1)\partial^{1}(
-\alpha(s(y_{j})\otimes t(y_{j}))x_{j+m}y_{j+n+m}
-\beta(s(y_{j})\otimes t(y_{j}))y_{j+m}x_{j+2m}\\
\quad
-\beta y_{j}(s(y_{j+m})\otimes t(y_{j+m}))x_{j+2m})
 &\hspace{-120pt} \text{if}\, a=g_{j} \, \text{ for}\, 1 \leq i \leq n;\\
\end{cases}\\
%%%%%%%%%%%%%%%%%%%%%
%%%%%%%%%%%%%%%%%%%%%
&=
\begin{cases}
x_{i}(s(e_{i+n})\otimes t(e_{i+n}))x_{i+n}y_{i+2n}
-\alpha x_{i}y_{i+n}(s(e_{i+n+m})\otimes t(e_{i+n+m}))x_{i+n+m}\\
\quad
-\beta y_{i}(s(e_{i+m})\otimes t(e_{i+m}))x_{i+m}x_{i+n+m}
+\beta (s(e_{i})\otimes t(e_{i}))y_{i}x_{i+m}x_{i+n+m}\\
\quad
-\beta y_{i}x_{i+m}(s(e_{i+n+m})\otimes t(e_{i+n+m}))x_{i+n+m}&\hspace{-120pt}\text{if}\, a=f_{i}\, \text{ for}\, 1 \leq i \leq m ,\\
%%%%%%
(s(e_{j})\otimes t(e_{j}))x_{j}y_{j+n}y_{j+n+m}
-\alpha y_{j}(s(e_{j+m})\otimes t(e_{j+m}))x_{j+m}y_{j+n+m}\\
\quad
-\beta y_{j}y_{j+m}(s(e_{j+2m})\otimes t(e_{j+2m}))x_{j+2m}
 &\hspace{-120pt} \text{if}\, a=g_{j} \, \text{ for}\, 1 \leq i \leq n.\\
\end{cases}\\
%%%%%%%%%%%%%%%%%%
\end{align*}
%%%%%%%%%%%%%%%%%%

On the other hand, \textcolor{red}{we have}
%%%%%%%%%%%%%%%%%%
%%%%%%%%%%%%%%%%%%
\begin{align*} 
\sigma^{1}_{0}\partial^{2}(s(a) \otimes t(a))
&=
\begin{cases}
&\sigma^{1}_{0}((
(s(x_{i}) \otimes t(x_{i}))x_{i+n}y_{i+2n}
+x_{i}(s(x_{i+n}) \otimes t(x_{i+n}))y_{i+2n}\\
&\quad\quad
+x_{i}x_{i+n}(s(y_{i+2n}) \otimes t(y_{i+2n}))
)\\
&\quad
-\alpha(
(s(x_{i}) \otimes t(x_{i}))y_{i+n}x_{i+n+m}
+x_{i}(s(y_{i+n}) \otimes t(y_{i+n}))x_{i+n+m}\\
&\quad\quad
+x_{i}y_{i+n}(s(x_{i+n+m}) \otimes t(x_{i+n+m}))
)\\
&\quad
-\beta(
(s(y_{i}) \otimes t(y_{i}))x_{i+m}x_{i+n+m}
+y_{i}(s(x_{i+m}) \otimes t(x_{i+m}))x_{i+n+m}\\
&\quad\quad
+y_{i}x_{i+m}(s(x_{i+n+m}) \otimes t(x_{i+n+m}))
))
\hspace{55pt}\text{if $h=f_{i}$,}\\
&
\sigma^{1}_{0}((
(s(x_{j}) \otimes t(x_{j}))y_{j+n}y_{j+n+m}
+x_{j}(s(y_{j+n}) \otimes t(y_{j+n}))y_{j+n+m}\\
&\quad\quad
+x_{j}y_{j+n}(s(y_{j+n+m}) \otimes t(y_{j+n+m}))
)\\
&\quad
-\alpha(
(s(y_{j}) \otimes t(y_{j}))x_{j+m}y_{j+n+m}
+y_{j}(s(x_{j+m}) \otimes t(x_{j+m}))y_{j+n+m}\\
&\quad\quad
+y_{j}x_{j+m}(s(y_{j+n+m}) \otimes t(y_{j+n+m}))
)\\
&\quad
-\beta(
(s(y_{j}) \otimes t(y_{j}))y_{j+m}x_{j+2m}
+y_{j}(s(y_{j+m}) \otimes t(y_{j+m}))x_{j+2m}\\
&\quad\quad
+y_{j}y_{j+m}(s(x_{j+2m}) \otimes t(x_{j+2m}))
))
\hspace{70pt}\text{if $h=g_{j}$,}\\
\end{cases}\\
%%%%%%%%%%%%%%%%%%
%%%%%%%%%%%%%%%%%%
&=
\begin{cases}
&x_{i}(s(e_{i+n}) \otimes t(e_{i+n}))x_{i+n}y_{i+2n}
-\alpha x_{i}y_{i+n}(s(e_{i+n+m}) \otimes t(e_{i+n+m}))x_{i+n+m}\\
&\quad
+\beta(s(e_{i}) \otimes t(e_{i}))y_{i}x_{i+m}x_{i+n+m}
-\beta y_{i}x_{i+m}(s(e_{i+n+m}) \otimes t(e_{i+n+m}))x_{i+n+m}\\
&\quad
-\beta y_{i}(s(e_{i+m}) \otimes t(e_{i+m}))x_{i+m}x_{i+n+m}
\hspace{75pt}\text{if $h=f_{i}$,}\\
&
((s(e_{j}) \otimes t(e_{j}))x_{j}y_{j+n}y_{j+n+m}
-\alpha y_{j}(s(e_{j+m}) \otimes t(e_{j+m}))x_{j+m}y_{j+n+m}\\
&\quad
-\beta y_{j}y_{j+m}(s(e_{j+2m}) \otimes t(e_{j+2m}))x_{j+2m})
\hspace{67pt}\text{if $h=g_{j}$.}\\
\end{cases}\\
\end{align*}
%%%%%%%%%%%%%%%%%%
Therefore, $\sigma^{1}_{0}\partial^{2}=(-1)\partial^{1}\sigma^{1}_{1}$ \textcolor{red}{holds}.
\end{proof}

%%%%%%%%%%%%%%%%%

\begin{prop}\label{cup-prod}
We use \textcolor{red}{the notations} in Proposition {\rm \ref{ring-1}}. 
We obtain the following \textcolor{red}{equality} in $\HH^{2}(\nabla A)$\textup{:}
\begin{enumerate}
%%%%%%%%%%%%%%
\item When $m \geq n \geq 1$ and \textup{Case I} holds\textup{,} 
$
\left[h^{1} \sigma^{2}_{1}\right] =m\left[\tau_{g_{n}}^{y^{2}x}\right].
$
%%%%%%%%%%%%%%
\item When $m \geq n= 1$\textup{:}
\begin{itemize}
%%%%%%%%%%%%
\item
\textup{For Case $1$, }
$
\left[h^{1}\sigma^{3}_{1}\right]=m\beta\lambda_{m}\left[\tau_{g_{1}}^{yx^{m+1}}\right], \,
%%%%%%%%%%%%%%
\left[h^{1}\sigma^{4}_{1}\right]=-\beta\lambda_{m}\left[\tau_{g_{1}}^{xyx^{m}}\right], \,
%%%%%%%%%%%%%%
\left[h^{3}\sigma^{4}_{1}\right]=\beta\lambda_{m}\left[\tau_{g_{1}}^{x^{2m+1}}\right].\,
%%%%%%%%%%%%%%
$
%%%%%%%%%%%%%%%
\text{Moreover, for \textup{Case I, }}
$\left[h^{2}\sigma^{3}_{1}\right]=0$, 
%%%%%%%%%%%%%
$\left[h^{2}\sigma^{4}_{1}\right]=-\beta\left[\tau_{g_{1}}^{xyx^{m}}\right]$.
%%%%%%%%%%%%%
\item
\textup{For Case $2$, }
$\left[h^{1}\sigma^{5}_{1}\right]=0$. 
%%%%%%%%%%%%%
\end{itemize}
%%%%%%%%%%%%%
\item When $m=n=1$\textup{:}
\begin{itemize}
\item
\textup{For Case $1$,}
$
\left[h^{1}\sigma^{3'}_{1}\right]=\beta\left[\tau_{f_{1}}^{y^{2}x}\right],\;
\left[h^{1}\sigma^{4'}_{1}\right]=\left[h^{2}\sigma^{4'}_{1}\right]=-\beta\left[\tau_{f_{1}}^{yxy}\right],\;
\left[h^{3}\sigma^{4'}_{1}\right]=-\beta\left[\tau_{g_{1}}^{yxy}\right],\;
$

$
\left[h^{4}\sigma^{3'}_{1}\right]=-\beta\left[\tau_{f_{1}}^{xyx}\right],\;
\left[h^{3'}\sigma^{4'}_{1}\right]=-\beta\left[\tau_{f_{1}}^{y^{3}}\right],\;
\left[h^{2}\sigma^{3'}_{1}\right]=\left[h^{4}\sigma^{4'}_{1}\right]=\left[h^{3}\sigma^{3'}_{1}\right]=0.
$
%%%%%%%%%%%%%%
\item 
\textup{For Case $2$, }
$
\left[h^{1}\sigma^{5'}\right]=
\left[h^{5}\sigma^{5'}\right]=0.
$
\end{itemize}
\end{enumerate}
\end{prop}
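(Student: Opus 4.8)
The plan is to combine the definition of the Yoneda product with the chain maps constructed in Proposition~\ref{ring-1}. Since $h^{b}\colon P^{1}\to\nabla A$ is a $1$-cocycle and $(\si^{b}_{0},\si^{b}_{1})$ is a chain map lifting it (Proposition~\ref{ring-1}), the Yoneda product of two basis classes $[h^{a}],[h^{b}]\in\HH^{1}(\nabla A)$ from Table~\ref{table-HH1} is
\[
[h^{a}]\smile[h^{b}]=\bigl[\,h^{a}\circ\si^{b}_{1}\,\bigr]\in\HH^{2}(\nabla A)=\widehat{P^{2}}/\Im\widehat{\partial^{2}}.
\]
Thus the whole proposition reduces, for each pair occurring in the statement, to computing the cochain $h^{a}\circ\si^{b}_{1}\colon P^{2}\to\nabla A$ and reducing its class modulo $\Im\widehat{\partial^{2}}$. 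By the graded-commutativity of the Yoneda product (Theorem~\ref{gr-com}), which here reads $[h^{a}]\smile[h^{b}]=-[h^{b}]\smile[h^{a}]$, it suffices to compute one ordering of each pair.

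First I would evaluate $h^{a}\circ\si^{b}_{1}$ on the generators $f_{i}\ (1\le i\le m)$ and $g_{j}\ (1\le j\le n)$ of $\cG^{2}$, starting from the explicit formulas for $\si^{b}_{1}$. Each $\si^{b}_{1}(s(h)\otimes t(h))$ is a $\k$-linear combination of tensors of the form (path)$\,\otimes\,$(path) supported in the summands $\nabla A\,s(c)\otimes t(c)\,\nabla A$ of $P^{1}$ indexed by arrows $c\in\cG^{1}$, and since $h^{a}=\sum_{c}\mu_{c}\tau_{c}$ (together with the extra cochains $\tau^{x^{m}}_{y_{j}}$, $\tau^{y}_{x_{i}}$ in the cases $n=1$), applying $h^{a}$ and using that each $\tau_{c}$ is a $(\nabla A)^{\mathrm e}$-homomorphism picks out, summand by summand, the path monomial obtained by inserting the corresponding generator of $\nabla A$ and multiplying out. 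Next I would bring these monomials into the normal form used to build the $\k$-basis of $\widehat{P^{2}}$ in Lemmas~\ref{bas-IU} and~\ref{bas-IM}; this uses the relations $f_{i}=0$, $g_{j}=0$ in $\nabla A$ and, when $n=1$, Lemma~\ref{IU-Lem2.3} together with its Beilinson-algebra analogue $x_{i}\cdots x_{i+j-1}y_{i+j}=\be\la_{j-1}y_{i}x_{i+m}\cdots x_{i+m+j-1}+\la_{j}x_{i}y_{i+1}x_{i+m+1}\cdots x_{i+m+j-1}$, after which the coefficients of the basis cochains $\tau^{yx^{2}}_{f_{i}},\tau^{xyx}_{f_{i}},\tau^{y^{2}x}_{g_{j}},\tau^{yxy}_{g_{j}}$ (and, for $n=1$, $\tau^{x^{m+2}}_{f_{i}},\tau^{yx^{m+1}}_{g_{1}},\tau^{xyx^{m}}_{g_{1}},\tau^{x^{2m+1}}_{g_{1}},\tau^{y^{2}}_{f_{i}},\tau^{y^{3}}_{\bullet}$) can be read off. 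Finally I would pass to $\HH^{2}(\nabla A)=\widehat{P^{2}}/\Im\widehat{\partial^{2}}$: using the decomposition $\Im\widehat{\partial^{2}}=\Im\widehat{\partial^{2}}_{L_{1}}\oplus\Im\widehat{\partial^{2}}_{L_{2}}\ (\oplus\,\Im\widehat{\partial^{2}}{}^{\star}_{L_{2}})$ recorded before Proposition~\ref{bas-P2L1} and the explicit bases of these images and their complements from Propositions~\ref{bas-P2L1} and~\ref{bas-P2L2}, I would discard the component lying in $\Im\widehat{\partial^{2}}$ and obtain the class in the basis of Table~\ref{table-HH2}. For example, in Case~I with $m\ge n\ge1$, evaluating $h^{1}\circ\si^{2}_{1}$ on the $f_{i}$ produces an alternating sum over $i$ which, modulo $\Im\widehat{\partial^{2}}_{L_{1}}$ and by the relations among the $\tau^{yx^{2}}_{f_{i}}$ and $\tau^{y^{2}x}_{g_{j}}$ in Proposition~\ref{bas-P2L1}, collapses to $m\,[\tau^{y^{2}x}_{g_{n}}]$; the remaining identities follow in the same way, branching along Conditions~1 and~2 and along $n=m=1$, $n=1<m$, $n\ge2$ exactly as in the statement.

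The hard part will be purely the bookkeeping: each product requires evaluating $\si^{b}_{1}$ on every generator $f_{i},g_{j}$ with the correct index shifts around the quiver $\cQ_{\nabla A}$, applying the sum $h^{a}=\sum_{c}\mu_{c}\tau_{c}$, normalizing non-standard path monomials via the relations and Lemma~\ref{IU-Lem2.3}, and reducing modulo the comparatively large space $\Im\widehat{\partial^{2}}$. The genuinely delicate points I expect are: the sign $(-1)^{i}$ built into $\si^{2}_{1}$, which is exactly what makes the alternating sum over $i$ collapse to the integer $m$ rather than cancel; and, in the cases $n=1$, keeping careful track of the scalars $\la_{m},\la_{m+1}$ and the powers of $\be$ so that the normalization via the Beilinson analogue of Lemma~\ref{IU-Lem2.3} lands on $\tau^{yx^{m+1}}_{g_{1}}$ rather than $\tau^{xyx^{m}}_{g_{1}}$ --- and, for $m=n=1$, on the $\star$-cochains $\tau^{y^{3}}_{f_{1}}$ and $\tau^{y^{3}}_{g_{1}}$ --- with precisely the stated coefficients. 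Once these are in hand, each listed equation follows by a direct computation of the kind just described.
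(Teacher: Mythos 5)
Your overall strategy coincides with the paper's: both compute $[h^{a}]\smile[h^{b}]$ as the class of $h^{a}\circ\si^{b}_{1}$ using the liftings from Proposition \ref{ring-1}, normalize the resulting path monomials via the quiver relations and Lemma \ref{IU-Lem2.3} (and its Beilinson-algebra analogue), and reduce modulo $\Im\widehat{\partial^{2}}$ using the explicit bases of Propositions \ref{bas-P2L1} and \ref{bas-P2L2}. So the framework is the right one, and graded commutativity indeed lets you compute only one ordering of each pair.

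The gap is that the two computations you classify as ``purely bookkeeping'' are exactly where the paper has to do real work, and your outline does not contain the ideas needed to carry them out. For part (1), after evaluation you are left with $\sum_{r=1}^{m}(-1)^{r}[\tau^{yx^{2}}_{f_{r}}]$, and the relations of Proposition \ref{bas-P2L1} only let you trade $\tau^{yx^{2}}_{f_{n+q}}$ for $-\tau^{yx^{2}}_{f_{q}}$ (steps of size $n$) and then convert the residual $\tau^{yx^{2}}_{f_{p}}$, $1\le p\le n$, into $-\tau^{y^{2}x}_{g_{p}}$; since the sign $(-1)^{r}$ advances by $1$ while the reduction advances by $n$, showing that the whole sum concentrates on the single surviving basis vector $\tau^{y^{2}x}_{g_{n}}$ with coefficient exactly $m$ is a genuine number-theoretic argument, not a collapse. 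The paper introduces sequences $b_{1}=n$, $a_{r}=\lfloor (m-b_{r})/n\rfloor$, $b_{r+1}=na_{r}+b_{r}-(m-n)$, uses $\gcd(m,n)=1$ to obtain the partition $\{1,\dots,m\}=\coprod_{1\le p\le n}\{nq+b_{p}\mid 0\le q\le a_{p}\}$, and invokes Hermite's identity to get $\sum_{r=1}^{n}a_{r}=m-n$, whence the coefficient is $n+\sum_{r}a_{r}=m$. Likewise, for $[h^{1}\si^{3}_{1}]$ the reduction modulo the basis $V$ of Proposition \ref{bas-P2L2} forces you to evaluate sums such as $\sum_{r=1}^{m}\la_{m+2-r}(\la_{r+1}+\be\la_{r-1})$ in closed form; the paper does this by writing $\la_{r}=(u^{r}-v^{r})/(u-v)$ for the roots $u,v$ of $t^{2}-\al t-\be$ and exploiting that $\zeta=u/v$ is an $(m+1)$-st root of unity when $\la_{m+1}=0$. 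Without these two devices (or substitutes for them), the asserted coefficients $m$ and $m\be\la_{m}$ are not justified; the remainder of your outline matches the paper's proof.
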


%%%%%%%%%%%%%%%
 
\begin{proof}
\textcolor{blue}{We prove only the assertions for} $h^{1} \sigma^{2}_{1}$ and $h^{1} \sigma^{3}_{1}$; \textcolor{red}{the remaining cases are similar.}

\textcolor{red}{If} Case I holds, 
\textcolor{red}{then,} for $a \in \cG_{2}$, \textcolor{red}{we have}
\vspace{-35pt}
\begin{align*}
h^{1} \sigma^{2}_{1}(s(a)\otimes t(a)) &=
\begin{cases}
(-1)^{i}x_{i}x_{i+n}y_{i+2n}
 &\text{if}\, a=f_{i}\, \text{ for }\, 1 \leq i \leq m ,\\
0  & \text{if}\, a=g_{j} \, \text{ for }\, 1 \leq i \leq n;\\
\end{cases}\\
&=
\begin{cases}
(-1)^{i}\beta y_{i}x_{i+m}x_{i+n+m}
 &\text{if}\, a=f_{i}\, \text{ for }\, 1 \leq i \leq m ,\\
0  & \text{if}\, a=g_{j} \, \text{ for }\, 1 \leq i \leq n.
\end{cases}
\end{align*}
%%%%%%%%%%%%%%%%%%%%%%%%
%%%%%%%%%%%%%%%%%%%%%%%%
%%%%%%%%%%%%%%%%%%%
\textcolor{blue}{We now define sequences $\{a_{r} \}_{r \geq 1}$ and
$\{b_{r} \}_{r \geq 1}$ by
\vspace{-30pt}
\[
b_{1}:=n,\, 
b_{r+1} = n a_{r} + b_{r} -(m-n), \,
a_{r}:=\left\lfloor \frac{m-b_{r}}{n}\right\rfloor, 
\vspace{-35pt}
\]
where $\lfloor \cdot \rfloor$ denotes the floor function.}
%%%%%%%%%%%%%%%%%%%
Clearly, the sequences satisfy
$m-n =n\left(\frac{m-b_{r}}{n}-1\right) + b_{r} < n a_{r} + b_{r} \leq n \left(\frac{m-b_{r}}{n}\right) + b_{r}= m$
 and $0< b_{r} \leq n$
  for $1 \leq r \leq n$.
Since $b_{r+1} -b_{r} = n a_{r} -(m-n)$,
we have
$b_{q}-b_{p}= n \Dsum_{r=p}^{q-1} a_{r} -(q-p)(m-n)$
for $q \geq p \geq 1$.
%%%%%%%%%%%%%%%%%%%%%%%%
Since $ \gcd(m, n)=1$, the remainder of $b_{j}$ and $b_{i}$ by $n$ are not equal.
Then $\{1,2, \textcolor{red}{\ldots} , n\}=\textcolor{magenta}{\{b_{1}, b_{2}, \ldots ,b_{n}\}}$ and we have $\{1, 2, \textcolor{red}{\ldots} ,m\}=\coprod_{1 \leq p \leq n}\{nq+b_{p}\mid \, 0\leq q \leq a_{p} \}$.
%%%%%%%%%%%%%%%%%%%%%%%%
%Also, 
\textcolor{red}{By} Hermite's identity (see \cite{M}), 
\textcolor{red}{we have}
\vspace{-15pt}
\[
\sum_{r=1}^{n}a_{r}
=\sum_{r=1}^{n}\left\lfloor \frac{m-n}{n} +\frac{n-b_{r}}{n}\right\rfloor
=\sum_{r=0}^{n-1}\left\lfloor \frac{m-n}{n} +\frac{r}{n} \right\rfloor
=\left\lfloor n \frac{m-n}{n} \right\rfloor
=m-n
\text{ and then  }
b_{n+1}=n.
\]
%%%%%%%%%%%%%%%%%%%%%%%%
%%%%%%%%%%%%%%%%%%%%
%%%%%%%%%%%%%%%%%%%%%%%%%%%
%%%%%%%%%%%%%%%%%%%%%%%%%%%
Hence, \textcolor{red}{we have}
\begin{align*}
\left[h^{1} \sigma^{2}_{1}\right]
&=\sum_{r=1}^{m}(-1)^{r}\left[\tau_{f_{r}}^{yx^2}\right]
%%%%%
\\
&=\sum_{p=1}^{n}\sum_{q=0}^{a_{p}}(-1)^{nq+b_{p}} \left[\tau_{f_{nq+b_{p}}}^{yx^2}\right]
&&\hspace{-120pt}
\textcolor{magenta}{\big(\because \{1, \ldots ,m\}=\coprod_{1 \leq p \leq n}\{nq+b_{p} \mid 0\leq q \leq a_{p} \}\big)}
%%%%%
\\
%%%%%%%%%%%%%%%
&=\sum_{p=2}^{n}\sum_{q=0}^{a_{p}}(-1)^{nq+b_{p}} \left[\tau_{f_{nq+b_{p}}}^{yx^2}\right]
+\sum_{q=0}^{a_{1}-1}(-1)^{nq+b_{1}} (q+1)\left[\tau_{f_{nq+b_{1}}}^{yx^2}+\tau_{f_{n(q+1)+b_{1}}}^{yx^2}\right]
\\
&\quad +(-1)^{na_{1}+b_{1}} (a_{1}+1)\left[\tau_{f_{na_{1}+b_{1}}}^{yx^2}\right]
%\quad (\therefore \Dsum_{q=0}^{a_{1}}\left[\tau_{f_{r}}^{yx^2}\right]=\sum_{q=0}^{a_{1}-1}(-1)^{nq+b_{1}} (q+1)\left[\tau_{f_{nq+b_{1}}}^{yx^2}+\tau_{f_{n(q+1)+b_{1}}}^{yx^2}\right]
%+(-1)^{na_{1}+b_{1}} (a_{1}+1)\left[\tau_{f_{na_{1}+b_{1}}}^{yx^2}\right])
\\
%%%%%%%%%%%%%%%
&=\sum_{p=2}^{n}\sum_{q=0}^{a_{p}}(-1)^{n\textcolor{red}{q}+b_{p}} \left[\tau_{f_{nq+b_{p}}}^{yx^2}\right]\\
&\quad +(-1)^{na_{1}+b_{1}} (a_{1}+1)\left(\left[\tau_{f_{na_{1}+b_{1}}}^{yx^2}+\tau_{g_{na_{1}+b_{1}-(m-n)}}^{y^{2}x}\right]-\left[\tau_{g_{b_{2}}}^{y^{2}x}+\tau_{f_{b_{2}}}^{yx^2}\right]\right)\\
&\quad +(-1)^{b_{2}} (a_{1}+1)\left[\tau_{f_{b_{2}}}^{yx^2}\right]\\
%%%%%%%%%%%%%%%
&=\sum_{p=2}^{n}\sum_{q=0}^{a_{p}}(-1)^{nq+b_{p}} \left[\tau_{f_{nq+b_{p}}}^{yx^2}\right]
+(-1)^{b_{2}} (a_{1}+1)\left[\tau_{f_{b_{2}}}^{yx^2}\right]
&&\hspace{-100pt}
\textcolor{magenta}{(\because \text{Proposition \ref{bas-IU} and Lemma \ref{par-IM}})}\\
%%%%%%%%%%%%%%%
%\vdots\\
%&=\cdots \\
&=(-1)^{b_{n+1}} \left(n+\sum_{r=1}^{n}a_{r}\right)\left[\tau_{f_{b_{n+1}}}^{yx^2}\right]\\
%%%%%
&=m\left[\tau_{g_{n}}^{y^{2}x}\right]+
(-1)^{n}m\left[\tau_{g_{n}}^{y^{2}x}+\tau_{f_{n}}^{yx^2}\right]
&&\hspace{-100pt}
\textcolor{magenta}{(\because \text{Hermite's identity})}\\
%%%%%
&=m\left[\tau_{g_{n}}^{y^{2}x}\right].
&&\hspace{-100pt}
\textcolor{magenta}{(\because \text{Proposition \ref{bas-IU} and Lemma \ref{par-IM}})}
\end{align*}
Therefore, we obtain the \textcolor{red}{equality} of $h^{1}\sigma_{1}^{2}$.

%%%%%%%%%%%%%%%
\textcolor{red}{If} $m \geq n=1$ and Case $1$ holds, 
\textcolor{red}{then, } for $a \in \cG_{2}$, \textcolor{red}{we have}
\vspace{-15pt}
\begin{align*}
h^{1} \sigma^{3}_{1}(s(a)\otimes t(a)) &=
\begin{cases}
\beta \lambda_{i-1} x_{i}x_{i+1} x_{i+2} \cdots x_{i+m+1}\\
\quad +\lambda_{i+1} x_{i}x_{i+1}x_{i+2} \cdots x_{i+m+1}
 &\text{if }\, a=f_{i}\, \text{ for }\, 1 \leq i \leq m ,\\
x_{1}x_{2} \cdots x_{m+1} y_{m+2}
 & \text{if }\, a=g_{1};\\
\end{cases}\\
&=
\begin{cases}
(\beta \lambda_{i-1}+\lambda_{i+1}) x_{i}x_{i+1} x_{i+2} \cdots x_{i+m+1}
 &\text{if }\, a=f_{i}\, \text{for}\, 1 \leq i \leq m ,\\
\beta\lambda_{m}y_{1}x_{m+1} \cdots x_{2m} x_{2m+1}
 & \text{if }\, a=g_{1}.
\end{cases}
\end{align*}
%%%%%%%%%%%%%%%
%%%%%%%%%%%%%%%%%%
By \cite[Lemma 2.6 (2)]{IU} and $\lambda_{m+1}=0$, we have $\alpha^{2}+4\beta \neq 0$.
Let $u, v$ be the solutions of $t^2-\alpha t -\beta=0$ with $u \neq v$.
Since $\lambda_{0}=0$ and $\lambda_{1}\textcolor{red}{=0}$, we can write $\lambda_{r}=\dfrac{u^{r}-v^{r}}{u-v}$ for all $r \geq -1$.
Also, we have $0=\lambda_{m+1}=\dfrac{u^{m+1}-v^{m+1}}{u-v}$. 
\textcolor{red}{Then} there exists an ($m+1$)-th root of unity $\zeta \neq 1$ such that $\zeta = \dfrac{u}{v}$.
\textcolor{blue}{We have the following equalities\textup{:}}
\begin{align*}
\left[h^{1}\sigma^{3}_{1}\right]
&=\beta\lambda_{m}\left[\tau_{g_{1}}^{yx^{m+1}}\right]+\sum_{r=1}^{m}(\lambda_{r+1}+\beta\lambda_{r-1})\left[\tau_{f_{r}}^{x^{m+2}}\right]\\
%%%%%%%%%
&=\left(\beta\lambda_{m}+\sum_{r=1}^{m} \lambda_{m+2-r}(\lambda_{r+1}+\beta\lambda_{r-1})\right)\left[\tau_{g_{1}}^{yx^{m+1}}\right]
 -\left(\sum_{r=1}^{m} \lambda_{m+1-r}(\lambda_{r+1}+\beta\lambda_{r-1})\right)\left[\tau_{g_{1}}^{xyx^{m}}\right]\\
%%%%%
&\quad+\sum_{r=1}^{m}(\lambda_{r+1}+\beta\lambda_{r-1})\left[\tau_{f_{r}}^{x^{m+2}}-\lambda_{m+2-r}\tau_{g_{1}}^{yx^{m+1}}+\lambda_{m+1-r}\tau_{g_{1}}^{xyx^{m}}\right]\\
%%%%%%%%%
&=\frac{v^{m+3}}{(u-v)^{2}}\left(-\zeta(\zeta-1)(\zeta^{m}-1)+\sum_{r=1}^{m} (\zeta^{m+2-r}-1)((\zeta^{r+1}-1)-\zeta(\zeta^{r-1}-1))\right)\left[\tau_{g_{1}}^{yx^{m+1}}\right]\\
%%%%%
&\quad -\frac{v^{m+2}}{(u-v)^{2}}\left(\sum_{r=1}^{m} (\zeta^{m+1-r}-1)((\zeta^{r+1}-1)-\zeta(\zeta^{r-1}-1))\right)\left[\tau_{g_{1}}^{xyx^{m}}\right]\\
%%%%%%%%%%%
&=\frac{v^{m+3}}{(u-v)^{2}}\left((\zeta-1)^{2}+\sum_{r=1}^{m} ((\zeta-1)^{2}+(\zeta^{m+3-r}-\zeta^{m+2-r}+\zeta^{r}-\zeta^{r+1}))\right)\left[\tau_{g_{1}}^{yx^{m+1}}\right]\\
%%%%%
&\quad -\frac{v^{m+2}}{(u-v)^{2}}\left(\sum_{r=1}^{m} ((\zeta^{m+2-r}-\zeta^{r+1})+(\zeta^{r}-\zeta^{m+1-r}))\right)\left[\tau_{g_{1}}^{xyx^{m}}\right]\\
%%%%%%%%%%%
&=\frac{v^{m+3}}{(u-v)^{2}}((\zeta-1)^{2}+m(\zeta-1)^{2}-\zeta^{2}+2\zeta-1)\left[\tau_{g_{1}}^{yx^{m+1}}\right]
%%%%%%%%%%%
=m\beta\lambda_{m}\left[\tau_{g_{1}}^{yx^{m+1}}\right].
\end{align*}
%%%%%%%%%%%%%%
\textcolor{blue}{Therefore, we obtain the equality for}
$\left[h^{1}\sigma^{3}_{1}\right]$.
\end{proof}

%%%%%%%%%%%%%%%%%%%%%%%%%%%%
To describe the ring structures of the Hochschild cohomology groups $\HH^{r}(\nabla A)$, 
%of $\nabla A$, 
we set \textcolor{red}{the} notation as follows: 
For integers $a \geq 1$ and $b \geq 0$, the exterior algebra is denoted by 
\[
\Lambda(a, b):=
\begin{cases}
  \Lambda(\langle s_{p} \mid 1\leq p \leq a \rangle_{\bbk})
  & \text{if $b=0$}, \\
  \Lambda(\langle s_{p}, t_{q}\mid 1\leq p \leq a, 1 \leq q \leq b \rangle_{\bbk})
  & \text{if $b > 0$}, 
\end{cases}
\quad\text{where $\deg s_{p}=1$ and $\deg{t_{q}}=2.$}
\]
%%%%%%%%%%%%%%%%%%%%%%%%%%%%

\begin{thm}
\label{thm-ring}
%\begin{flushleft}
%Let $A=A(\alpha, \beta)$ be a graded down-up algebra with weights $\deg x = n$, $\deg y = m$ and $\beta \neq 0$ where $\gcd(n, m)=1$ and $m \geq n \geq 1$.
%For each condition, there exist the following isomorphisms\textup{:}
%%%%%%%%%%%%%%%%%%%%%
%\[
%\bigoplus_{{r} \geq 0}\HH^{r}(\nabla A) \cong \Lambda(a,b) \left/ \left(I + \bigoplus_{{p} \geq 3}(\Lambda(a,b))_{p}\right)\right.
%\]
%where integers $a \geq 1$, $b \geq 0$ and the bisided homogeneous ideal $I$ of the exterior algebra $\Lambda(a, b)$ as in \textup{Table \ref{table-ring}}.
%%%%%%%%%%%%%%%%%%%%%
\textcolor{red}{Let  $A=A(\alpha, \beta)$ be a graded down-up algebra 
with $\deg x = n$, $\deg y = m$, $\beta \neq 0$, 
$\gcd(n, m)=1$ and $m \geq n \geq 1$, 
and let $\nabla A$ be its Beilinson algebra. 
Then, for each case listed below, there is an isomorphism
\[
\bigoplus_{{r} \geq 0}\HH^{r}(\nabla A) \cong \Lambda(a,b) \left/ \left(I + \bigoplus_{{p} \geq 3}(\Lambda(a,b))_{p}\right)\right.,
\]
where $a \geq 1$ and $b \geq 0$ are integers, and 
$I$ is a two-sided homogeneous ideal of the exterior algebra $\Lambda(a, b)$
specified in \textup{Table \ref{table-ring}}.}

%%%%%%%%%%%%%%%%%%%%
\renewcommand{\arraystretch}{1.7}
%%%%%%%%%%%%%%%%%%%%
%\begin{flushleft}
%%%%%%%%%%%%%%%%%%%%
\begin{longtable}{|c|c|c|c||c|c|}
%\caption{List of the number of generator of $\Lambda(a,b)$ and the set of generator of $I$} 
\caption{\textcolor{red}{Number of generators $(a,b)$ and generators of the ideal $I$}}
\label{table-ring}\\ \hline
%%%%%%%%%%%%%%%%%
$n$ & $m$ & \textup{Cond.$1$}& \textup{Cond.$2$} & %\hspace{8pt} 
$(a, b)$ & The set of generators of $I$
\\ \hline
\endfirsthead
%%%%%%%%%%%%%%%%%
%%%%%m=n=1%%%%%%%
%%%%%%%%%%%%%%%%%
$n=1$ & $m=1$ & \textup{Case I} & \textup{Case $1$} & $(6,0)$ 
& 
$\{{s_{2}s_{3}, s_{2}s_{4}, s_{3}s_{4}, s_{5}s_{6}, s_{1}s_{5}-s_{2}s_{5},  s_{1}s_{6}-s_{2}s_{6}}\}$
\\ \hline
%%%%%%%%%%%%%%%%%%
$n=1$ & $m=1$ & \textup{Case I\hspace{-1.4pt}I} & \textup{Case $2$} & $(3,6)$ &
$\{s_{r}s_{r'}\mid 1\leq r, r' \leq 3\}$
\\ \hline
%%%%%%%%%%%%%%%%%%
$n=1$ & $m=1$ & \textup{Case I\hspace{-1.4pt}I} & \textup{Case $3$} & $(1,4)$ &
$\{0\}$
\\ \hline
%%%%%%%%%%%%%%%%%%
%%%%%%%m>n=1%%%%%%
%%%%%%%%%%%%%%%%%%
$n=1$ & $m=2$ & \textup{Case I\hspace{-1.4pt}I} & \textup{Case $1$} & $(3,5)$ &
$\{0\}$
\\ \hline
%%%%%%%%%%%%%%%%%%
$n=1$ & $m=2$ & \textup{Case I\hspace{-1.4pt}I} & \textup{Case $2$} & $(2,7)$ &
$\{0\}$
\\ \hline
%%%%%%%%%%%%%%%%%%
$n=1$ & $m=2$ & \textup{Case I\hspace{-1.4pt}I} & \textup{Case $3$} & $(1,6)$ &
$\{0\}$
\\ \hline
%%%%%%%%%%%%%%%%%%
$n=1$ & \textcolor{red}{$m \geq 3$} & \textup{Case I} & \textup{Case $1$} & $(4,m+1)$ &
$\{s_{1}s_{4}-s_{2}s_{4}, s_{2}s_{3}\}$
\\ \hline
%%%%%%%%%%%%%%%%%%
$n=1$ & \textcolor{red}{$m \geq 3$} & \textup{Case I\hspace{-1.4pt}I} & \textup{Case $1$} & $(3,m+1)$ &
$\{0\}$
\\ \hline
%%%%%%%%%%%%%%%%%%
$n=1$ & $m \geq 3$ & \textup{Case I\hspace{-1.4pt}I} & \textup{Case $2$} & $(2,m+3)$ &
$\{0\}$
\\ \hline
%%%%%%%%%%%%%%%%%%
$n=1$ & $m \geq 3$ & \textup{Case I\hspace{-1.4pt}I} & \textup{Case $3$} & $(1,m+2)$ &
$\{0\}$
\\ \hline
%%%%%%%%%%%%%%%%%%
%%%%%m>n>1%%%%%%%%
%%%%%%%%%%%%%%%%%%
$n=2$ & $m > n$ & \textup{Case I} & \hrulefill & $(2,m+4)$ &
$\{0\}$
\\ \hline
%%%%%%%%%%%%%%%%%%
$n=2$ & $m > n$ & \textup{Case I\hspace{-1.2pt}I} & \hrulefill & $(1,m+4)$ &
$\{0\}$
\\ \hline
%%%%%%%%%%%%%%%%%%
$n \geq 3$ & $m > n$ & \textup{Case I} & \hrulefill & $(2,m+n)$ &
$\{0\}$
\\ \hline
%%%%%%%%%%%%%%%%%%
$n \geq 3$ & $m > n$ & \textup{Case I\hspace{-1.2pt}I} & \hrulefill & $(1,m+n)$ &
$\{0\}$
\\ \hline
%%%%%%%%%%%%%%%%%%
%%%%%%%%%%%%%%%%%%%%%%%
\end{longtable}
\renewcommand{\arraystretch}{1.0}
%%%%%%%%%%%%%%%
%%%%%%%%%%%%%%%
\end{thm}

%%%%%%%%%%%%%%%%%%%%%%%%%%%%
\begin{proof}
%%%%%%%%%%%%%%%%%%%%%%%%%%%
\textcolor{red}{We use the symbols $(h_1, h_2, \ldots)$ defined at the beginning of Section \ref{sec-5}.}
\textcolor{red}{We prove for $n=1$, $m = 1$, \textup{Case $3$}, and \textup{Case I\hspace{-1.2pt}I} and $n=1$, $m \geq 3$, \textup{Case $1$}, and \textup{Case I}.
In other cases as well, it can be proven using the same method.}
%%%%%%%%%%%%%%%%%%%%%%%%%%%
%\textcolor{magenta}
%{
\begin{enumerate}
%%%%%%%%%%%%%%%%%%%%%%%%%%%
\item \textcolor{red}{If} $n=1$, $m = 1$, \textup{Case $3$}, and \textup{Case I\hspace{-1.2pt}I} hold, 
\textcolor{red}{then} Theorem \ref{thm-Bel}, Propositions \ref{basHH-1} \textcolor{red}{and} \ref{basHH-2} provide
\begin{align*}
&\HH^{1}(\nabla A)
=
\left\langle
\left[\Dsum_{r=1}^{n+2m} \tau_{x_{r}}\right]
\right\rangle_{\bbk}, \\
%%%%%%
&\HH^{2}(\nabla A)
=
\left\langle
\Big[\tau_{g_1}^{yxy}\Big], \Big[\tau_{f_1}^{xyx}\Big], 
\Big[\tau_{g_1}^{x^3}\Big], \Big[\tau_{f_1}^{y^3}\Big] \right\rangle_{\bbk},\\
%%%%%%%%
&\HH^{i}(\nabla A)=0 \,\text{ for }\, i \geq 3.
\end{align*}
%%%%%%%%%
%%%%%%%%%%%%%%%%%%%%
%%%%%%%%%%%%%%%%%%%%%%%%%%
Moreover, by Theorem \ref{gr-com}, we have
%%%%%%%%%%%%%
$\HH^{1}(\nabla A)\smile \HH^{1}(\nabla A)=0.$
%%%%%%%%%%%%%
\textcolor{red}{Hence a ${\bbk}$-algebra morphism $\phi: \Lambda(1,4) \rightarrow \bigoplus_{{r} \geq 0}\HH^{r}(\nabla A)$ can be defined by}
\[
s_{1}
\mapsto [h_{1}];\quad
t_{1}
\mapsto \Big[\tau_{g_1}^{yxy}\Big];\quad
t_{2}
\mapsto \Big[\tau_{f_1}^{xyx}\Big];\quad
t_{3}
\mapsto \Big[\tau_{g_1}^{x^3}\Big];\quad
t_{4} 
\mapsto \Big[\tau_{f_1}^{y^3}\Big].
\]
Then $\phi$ is surjective $k$-algebra morphism and 
$\Ker \phi = \bigoplus_{{p} \geq 3}(\Lambda(a,b))_{p}$.
Therefore, \textcolor{red}{this proves the claim.} 
%%%%%%%%%%%%%
%%%%%%%%%%%%%%%%%%%%%%%%%%%%%%%%%%%%%%%%%%%%%%%%%%%
\item \textcolor{red}{If} $n=1$, $m \geq 3$, \textup{Case $1$}, and \textup{Case I}. hold, 
\textcolor{red}{then} 
%%%%%%%%%%%%%%%%%%%%%%%%%%%
Theorem \ref{thm-IU}, Propositions \ref{basHH-1} \textcolor{red}{and} \ref{basHH-2} provide
\begin{align*}
\HH^{1}(\nabla A)
&=
\left\langle
\left[\Dsum_{r=1}^{n+2m} \tau_{x_{r}}\right], 
\left[\Dsum_{r=1}^{m} (-1)^{r-1}\tau_{x_{n+r}}\right],  
\left[\Dsum_{r=2}^{m+2} \lambda_{r-1}\tau^{x^{n}}_{y_{r}}\right],  
\left[\beta \Dsum_{r=1}^{m+2} \lambda_{r-2}\tau^{x^{n}}_{y_{r}}\right]
\right\rangle_{\bbk}, \\
%%%%%%
\HH^{2}(\nabla A)
&=
\left\langle
\Big[\tau^{xyx}_{f_i}\Big], \Big[\tau^{yxy}_{g_1}\Big], \Big[\tau^{y^2 x}_{g_1}\Big], 
\Big[\tau^{yx^{m+1}}_{g_{1}}\Big], \Big[\tau^{xyx^m}_{g_{1}}\Big], 
\Big[\tau^{x^{2m+1}}_{g_{1}}\Big]
\middle|\, 1 \leq i \leq m \right\rangle_{\bbk},\\
%%%%%%%%
\HH^{i}(\nabla A)&=0 \,\text{ for }\, i \geq 3.
\end{align*}
%%%%%%%%%
%%%%%%%%%%%%%%%%%%%%
%%%%%%%%%%%%%%%%%%%%%%%%%%
Moreover, by Theorem \ref{gr-com} and Proposition \ref{cup-prod}, 
we summarize the %cup
\textcolor{red}{Yoneda} products on elements of $\HH^{1}(\nabla A)$ as the following table\textup{:}
%%%%%%%%%%%%%
\renewcommand{\arraystretch}{2.0}
\begin{longtable}{|c||c|c|c|c|}\hline
%%%%%%%%%%%%%%%%%
$\smile$ & $[h_{1}]$ & $[h_{2}]$ & $[h_{3}]$ & $[h_{4}]$ \\ \hline \hline
\endfirsthead
%%%%%%%%%%%%%%%%
  \hline
  \endlastfoot
%%%%%%%%%%%%%%%%
$[h_{1}]$ & $0$ & $m\left[\tau_{g_{1}}^{y^{2}x}\right]$ & $m\beta\lambda_{m}\left[\tau_{g_{1}}^{yx^{m+1}}\right]$ & $-\beta\lambda_{m}\left[\tau_{g_{1}}^{xyx^{m}}\right]$
\\ \hline
$[h_{2}]$ & $-m\left[\tau_{g_{1}}^{y^{2}x}\right]$ & $0$ & $0$ & $-\beta\left[\tau_{g_{1}}^{xyx^{m}}\right]$
\\ \hline
$[h_{3}]$ & $-m\beta\lambda_{m}\left[\tau_{g_{1}}^{yx^{m+1}}\right]$ & $0$ & $0$ & $\beta\lambda_{m}\left[\tau_{g_{1}}^{x^{2m+1}}\right]$
\\ \hline
$[h_{4}]$ & $\beta\lambda_{m}\left[\tau_{g_{1}}^{xyx^{m}}\right]$ & $\beta\left[\tau_{g_{1}}^{xyx^{m}}\right]$ & $-\beta\lambda_{m}\left[\tau_{g_{1}}^{x^{2m+1}}\right]$ & $0$ 
\\ 
\end{longtable}
\renewcommand{\arraystretch}{1.0}
%%%%%%%%%%%%%
\noindent
\textcolor{red}{Therefore,} we have
%%%%%%%%%%%%%
\[
\HH^{1}(\nabla A)\smile \HH^{1}(\nabla A)
=\left\langle 
\Big[\tau^{y^2 x}_{g_1}\Big], 
\Big[\tau^{yx^{m+1}}_{g_{1}}\Big], \Big[\tau^{xyx^m}_{g_{1}}\Big], 
\Big[\tau^{x^{2m+1}}_{g_{1}}\Big]  
\right\rangle_{\bbk}. 
\]
%%%%%%%%%%%%%
\textcolor{red}{Hence a ${\bbk}$-algebra morphism $\phi': \Lambda(4,m+1) \rightarrow \bigoplus_{{r} \geq 0}\HH^{r}(\nabla A)$ can be defined by}
\[
\begin{cases}
s_{1}
&\mapsto [h_{1}];\\
s_{2}
&\mapsto \lambda_{m} [h_{2}];\\
s_{3}
&\mapsto [h_{3}];
\end{cases}
\quad
\begin{cases}
s_{4}
&\mapsto [h_{4}];\\
t_{k} &\mapsto \left[\tau_{f_{k}}^{xyx}\right] \text{ for } 1 \leq k \leq n;\\
t_{m+1} &\mapsto \left[\tau_{g_{1}}^{yxy}\right].\\
\end{cases}
\]
Then $\phi'$ is surjective $k$-algebra morphism and 
$\Ker \phi' = \left( \left\langle s_{1}s_{4}-s_{2}s_{4}, s_{2}s_{3} \right\rangle + \bigoplus_{{p} \geq 3}(\Lambda(a,b))_{p} \right)$.
%Therefore, \textcolor{red}{this proves the claim.}
\textcolor{red}{This completes the proof.}
%%%%%%%%%%%%%%%
\end{enumerate}
%}%color
%%%%%%%%%%%%%%%
\end{proof}
%%%%%%%%%%%%%%%%%%%%%%%%%%%%
%%%%%%%%%%%%%%%%%%%%%%%%%%%%%%%%%%%%%%%%%%%%%%%%%%%

%%%%%%%%%%%%%%%%%%%%%%%%%%%%%%%%%%%%%%%%%%%%%%%%%%%
\section*{Acknowledgments}
%The authors are grateful to ??? for their support and helpful discussions.
%They also thank the referee for useful comments in improving the paper.
%The author would like to thank my supervisor Professor Ayako Itaba for the guidance andadvice on the study theme. 
%They would like to thank Professor Kenta Ueyama for their previous research. 
%They also thank to Professor Izuru Mori the referee for useful comments in improving the paper.
%\textcolor{red}{
%The authors are grateful to the referee for useful comments in improving the paper.
%We would like to thank the past and present members of our laboratory, Professor Masaki
%Matsuno, Shota Inoue, and Yuto Kitamura for the discussion.
%}
%The first author was supported by JSPS Grant-in-Aid for Scientific Research (C) 24K06653. 
%%We would like to thank the past and present members of the laboratory Professor Masaki
%%Matsuno, Mr. Shota Inoue, and Mr. Yuto Kitamura for the discussion.
\textcolor{red}{The authors are grateful to the referee for their useful comments that helped improve the paper. We would like to thank the past and present members of our laboratory, 
Shota Inoue, Masaki Matsuno and Yuto Kitamura, for helpful discussions.}
The first author was supported by JSPS Grant-in-Aid for Scientific Research (C) 24K06653. 
%%%%%%%%%%%%%%%%%%%%%%%%%%%%%%%%%%%%%%%%%%%%%%%%%%%

\end{document}